\def \perm{\mathfrak{P}}
\def \CP{\mathrm{CP}}
\def \QP{\mathrm{QP}}
\def \l{\langle}
\def \r{\rangle}
\pgfplotsset{compat=1.14}
\title{On the topology of bi-cyclopermutohedra}
\author[P\, Deshpande]{Priyavrat Deshpande}
\address{Chennai Mathematical Institute, India}
\email{pdeshpande@cmi.ac.in}
\author[N\, Manikandan]{Naageswaran Manikandan}
\address{Chennai Mathematical Institute, India}
\email{naageswaran@cmi.ac.in}
\author[A\, Singh]{Anurag Singh}
\address{Chennai Mathematical Institute, India}
\email{anuragsingh@cmi.ac.in}
\thanks{PD is partially funded by the MATRICS grant. AS is partially funded by a grant from Infosys Foundation}
\date{}
\begin{document}
\keywords{moduli space of planar polygons, discrete Morse theory, homology, permutohedron, poset of ordered partitions}
\subjclass[2010]{51M20}
\begin{abstract}
Motivated by the work of Panina and her coauthors in on cyclopermutohedron we study a poset whose elements correspond to equivalence classes of partitions of the set $\{1,\cdots, n+1\}$ up to cyclic permutations and orientation reversion.
This poset is the face poset of a regular CW complex which we call bi-cyclopermutohedron and denote it by $\mathrm{QP}_{n+1}$. 
The complex $\mathrm{QP}_{n+1}$ contains subcomplexes homeomorphic to moduli space of certain planar polygons with $n+1$ sides up to isometries. 
In this article we find an optimal discrete Morse function on $\mathrm{QP}_{n+1}$ and use it to compute its homology with $\mathbb{Z}$ as well as $\mathbb{Z}_2$ coefficients. 
\end{abstract}
\maketitle

\section{Introduction}

A planar mechanical linkage is a mechanism consisting of $n$ metal bars of fixed lengths $l_0,\dots, l_n$ connected by revolving joints, that can rotate full $360^{\circ}$, forming a closed polygonal chain. 
These linkages are modelled by closed piece-wise linear paths in $\mathbb{R}^2$ called planar polygons with specified side lengths. 


\begin{definition}
Consider a length vector $\ell :=(l_0,l_1,\dots,l_n) \in \mathbb{R}_+^{n+1}$ that prescribes side lengths of planar $(n+1)$-gons. 
The moduli space of such polygons viewed up to orientation-preserving isometries of $\mathbb{R}^2$ is:
$$\mathscr{M}_\ell := \{(u_0,\dots,u_n) \in (S^1)^{n+1} \mid \sum_{i=0}^{n} l_iu_i=0\}/\mathrm{SO}(2).$$
The moduli space of $(n+1)$-gons viewed up to the action of all isometries is: 
$$\overline{\mathscr{M}}_\ell := \{(u_0,\dots,u_n) \in (S^1)^{n+1}\mid \sum_{i=0}^{n} l_iu_i=0\}/\mathrm{O}(2).$$
\end{definition}

Geometrically, the elements of $\mathscr{M}_\ell$ represent closed piecewise linear paths that differ either by a rotation or a translation (or both). 
Similarly, the elements of $\overline{\mathscr{M}}_\ell$ represent closed piecewise linear paths that differ in addition by a reflection.

A length vector $\ell$ is \emph{generic} if no configuration in $\mathscr{M}_\ell$ fits a straight line. 
Let $ [n+1]:= \{1,\dots, n+1\}$. 
A subset $I\subset [n+1]$ is \emph{short with respect to} $\ell$ (or just $\ell$-short) if 
\[\sum_{i\in I} l_i  < \sum_{j \not \in I} l_j.\] 
It is known (see \cite{Farber2007}) that when $\ell$ is generic the corresponding moduli space $\mathscr{M}_\ell$ is a smooth, closed and orientable $(n-2)$-manifold. 
In rest of this paper we deal only with generic length vectors. 

Recall that a linearly ordered partition of $[n+1]$ in to $k$ blocks is a $k$-tuple $(I_1,\dots,I_k)$ of nonempty subsets of $[n]$ such that they are mutually disjoint and their union is $[n]$. 
The set of all these ordered partitions forms a lattice under the refinement order. 
\begin{definition}
	A \textbf{cyclically ordered partition} of the set $[n+1]$ is an equivalence class of linearly ordered partitions of $[n+1]$ with the relation that two ordered partitions are equivalent if one can be obtained from the other by a cyclic permutation of its blocks.
\end{definition}

For example, $(I_1, \dots, I_k), (I_2, \dots, I_k, I_1)$ and $ (I_k, I_1, \dots, I_{k-1})$ etc. are equivalent. 
\begin{remark}
When dealing with cyclically ordered partitions, we will choose the representative in which the block containing $n+1$ appears last when written linearly. 
\end{remark}

In \cite{Panina2012}, Panina gave a regular CW structure on $\mathscr{M}_\ell$ such that the $k$-cells  correspond to cyclically ordered partitions of $[n+1]$ into $(k+3)$ blocks, where each block of the partition is $\ell$-short. 
The boundary relations are given by the partition refinement.

Motivated by above, a CW complex called \emph{cyclopermutohedron} and denoted $\CP_{n+1}$ was introduced by G. Panina in \cite{panina2015cyclopermutohedron}. 
It is an $(n-2)$-dimensional regular CW complex whose $k$-cells are labeled by cyclically ordered partitions of $[n+1]$ into $(n+1-k)$ non-empty parts, where  $(n+1-k)>2$. 
The boundary relations in the complex correspond to the orientation preserving refinement of partitions.

The cyclopermutohedron is a "universal object" for moduli spaces of polygonal linkages i.e., given a generic length vector $\ell$ the complex $\CP_{n+1}$ contains a subcomplex homeomorphic to  $\mathscr{M}_\ell$.
Interestingly, the complex $\CP_{n+1}$ is neither a topological ball (possibly) nor a wedge of spheres; its homotopy type is much more complicated. 
In \cite{nekrasov2016cyclopermutohedron}, using discrete Morse theory, the authors showed that $H_i(\CP_{n+1})$ are torsion free for $i\geq 0$ and computed their Betti numbers. 
We recall their result:

\begin{theorem}[{\cite[Theorem 2]{nekrasov2016cyclopermutohedron}}]
The homology groups of $\CP_{n+1}$ are torsion free and 
\[ \mathrm{rank}(H_i(\CP_{n+1})) =
	\begin{cases} 
	2^n + \frac{2^n-3n-2}{2}, & i=n-2; \\
	\binom{n}{i}, & 0 \leq i < n-2; \\
	0, & \hbox{otherwise}.
	\end{cases}
\]
\end{theorem}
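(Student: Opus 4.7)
The plan is to apply discrete Morse theory to the face poset of $\CP_{n+1}$. A $k$-cell is labeled by a cyclically ordered partition of $[n+1]$ into $n+1-k\ge 3$ nonempty blocks, with covering relations given by splitting one block into two cyclically consecutive blocks. I would construct an explicit acyclic matching on this poset by chasing the element $1$: given a partition $\sigma$ with $1$ in a block $B$, if $|B|>1$ then pair $\sigma$ with the refinement that splits $\{1\}$ off as a singleton at the front of $B$; otherwise $B=\{1\}$, and pair $\sigma$ with the coarsening that merges $\{1\}$ into its cyclic successor. This rule fails precisely when merging would collapse the number of blocks below three, and these failures furnish the critical cells.

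Next I would verify acyclicity via a lexicographic poset map, for instance ordering partitions by the sequence of block sizes read cyclically starting from the block containing $1$. Then I would enumerate the critical cells by dimension. For $i<n-2$ the critical cells should correspond bijectively to $i$-subsets of $[n]$, because once $\{1\}$ is a forced singleton the remaining freedom is a choice of a distinguished block of $[n+1]\setminus\{1\}$, yielding the $\binom{n}{i}$ count. In the top dimension $n-2$ the count is larger and should split into two contributions: a $2^n$ term coming from the maximally refined partitions (partitions into singletons, indexed by the binary choices of a cyclic arrangement of $[n]$) and a corrective term $(2^n-3n-2)/2$ arising from near-maximal partitions after accounting for the residual symmetry of the cyclic labeling and the excluded degenerate configurations.

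Finally, to upgrade the Morse inequalities to equalities and obtain torsion-freeness, I would argue that the Morse differential vanishes integrally. One route is to observe that every gradient path between critical cells of consecutive dimensions must pass through a fixed "distinguished singleton" stratum introduced by the matching, and to exhibit a sign-reversing involution on such paths; a complementary sanity check is to match $\sum_i (-1)^i c_i$ with the Euler characteristic computed directly from the cell structure of $\CP_{n+1}$, which forces the inequalities to be equalities. The main obstacle is precisely this last step: constructing the matching and counting critical cells is systematic bookkeeping, but ruling out integer torsion requires either the sign involution on gradient paths or an independent free resolution, and the asymmetric top-dimensional count $2^n + (2^n-3n-2)/2$ is the place where the delicate combinatorics carries genuine content and will demand the most care.
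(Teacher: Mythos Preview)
Your matching has a genuine flaw: it is not acyclic. Because the partitions are \emph{cyclically} ordered, the singleton $\{1\}$ can be pushed all the way around the cycle and return to its starting position. Concretely, for $n+1=5$ the sequence
\[
(\{1\},\{2\},\{3\},\{4\},\{5\}) \to (\{1,2\},\{3\},\{4\},\{5\}) \to (\{2\},\{1\},\{3\},\{4\},\{5\}) \to (\{2\},\{1,3\},\{4\},\{5\}) \to \cdots
\]
continues by repeatedly merging $\{1\}$ into its successor and then splitting it out on the other side, until after four steps $\{1\}$ has travelled past $\{5\}$ and the cyclic identification returns you to the starting cell. No lexicographic function on block sizes will rule this out, since the block-size profile is unchanged along the entire path. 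The paper (following \cite{nekrasov2016cyclopermutohedron}) breaks this cycle by forbidding the merge whenever the successor block contains $n+1$, and then iterating the same idea with $2,3,\dots$ on the cells left unmatched, subject to the further constraint $k<I$; this multi-step refinement is what produces critical cells in every dimension, not just the top one.

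Even setting acyclicity aside, your description of the critical cells does not match the theorem. With your rule the only unmatched cells would be three-block partitions in which $\{1\}$ is a singleton, all of dimension $n-2$; this would force $H_i=0$ for $i<n-2$, contradicting $b_i=\binom{n}{i}$. In the actual matching the critical $i$-cells for $i<n-2$ are the partitions $(\nabla,N)$ where $\nabla$ is a decreasing string of $n-i$ singletons and $N$ is the block containing $n+1$; the $\binom{n}{i}$ comes from choosing which elements of $[n]$ lie in $N$, not from ``a choice of a distinguished block''. Likewise the top-dimensional count is $\binom{n}{n-2}$ cells of this first type together with the three-block cells $(i,I,N)$ with $i<I$; the $2^n$ in the formula is not counting ``maximally refined partitions'' as you suggest. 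Finally, the vanishing of the Morse differential is not a formality: the paper establishes it via an explicit incidence-number identity (the good path lemma) showing that the two gradient paths between any pair of critical cells always carry opposite signs; your Euler-characteristic check would only control ranks, not torsion.
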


The moduli space $\mathscr{M}_\ell$ admits a free $\mathbb{Z}_2$ action, wherein each polygon is mapped to its reflection about the $X$-axis. 
The quotient under this action is precisely $\overline{\mathscr{M}}_\ell$.
The space $\CP_{n+1}$ mimicking this action also admits a free $\mathbb{Z}_2$ action. 
The quotient space $\CP_{n+1}/\mathbb{Z}_2$ will be called \emph{bicyclopermutohedron} and denoted by $\QP_{n+1}$.
This quotient is the universal object for the moduli spaces $\overline{\mathscr{M}}_\ell$ in the same sense as described above.

The aim of this paper is to compute homology of $\QP_{n+1}$.
The statements of our main results are:
\begin{theorem}
The $\mathbb{Z}_2$-homology of $\QP_{n+1}$ is given as follows
\[ H_i(\QP_{n+1}, \mathbb{Z}_2) =
	\begin{cases}
	\displaystyle \bigoplus_{\xi(n,i)}\mathbb{Z}_2, & 0 \leq i \leq n-2;\\
	0, & \mathrm{otherwise}.
	\end{cases}\]
\end{theorem}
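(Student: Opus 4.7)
The plan is to compute $H_*(\QP_{n+1},\mathbb{Z}_2)$ by constructing an explicit discrete Morse function on the face poset of $\QP_{n+1}$ and then reading off the ranks from the resulting Morse complex. First I would make the cells of $\QP_{n+1}$ combinatorially precise: a $k$-cell corresponds to a ``bi-cyclically ordered partition'' of $[n+1]$ into $n+1-k$ blocks, i.e.\ an equivalence class of cyclically ordered partitions modulo the involution that reverses the cyclic order. As a standard representative I would fix the block containing $n+1$ to appear last; then the reversal involution acts on the remaining $n+1-k-1$ blocks, and the incidence relations in the CW structure of $\QP_{n+1}$ are exactly those induced by partition refinement on these equivalence classes.

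The second and central step is to build an acyclic matching $\mu$ on this poset. My starting point would be the Nekrasov--Panina matching used to prove Theorem~1, which pairs cells of $\CP_{n+1}$ according to a rule based on the position of a distinguished element (e.g.\ $1$) in the cyclic word and on whether a specified adjacent block can be split off or merged. I would first check whether this matching is equivariant for the reflection $\mathbb{Z}_2$-action on $\CP_{n+1}$; if so it descends directly to $\QP_{n+1}$, and if not I would modify the pairing rule on the ``boundary'' cells where symmetry breaks so that orbits of size one are either matched internally or declared critical. Acyclicity can then be verified either by restricting the natural linear extension used for $\CP_{n+1}$ to the quotient poset, or by showing the cell-orbit map does not create new directed cycles.

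The third step is to enumerate the critical cells and identify their count in dimension $i$ with $\xi(n,i)$. I would split the critical cells of $\QP_{n+1}$ into two families: those lifting to a pair of critical cells of $\CP_{n+1}$ interchanged by the $\mathbb{Z}_2$-action, which contribute half as many critical cells in the quotient, and those lifting to a single fixed critical cell (coming from palindromic cyclic partitions), which contribute one critical cell each. Summing the two contributions should match $\xi(n,i)$ exactly and, as a sanity check, one can verify the Euler characteristic identity against Theorem~1 using $\chi(\CP_{n+1})=2\chi(\QP_{n+1})$.

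The final step is to show that the Morse differential over $\mathbb{Z}_2$ vanishes, so that the Morse complex already computes homology and $H_i(\QP_{n+1},\mathbb{Z}_2)\cong \mathbb{Z}_2^{\xi(n,i)}$. Over $\mathbb{Z}_2$ this reduces to proving that between any two critical cells of consecutive dimensions the number of gradient paths is even; I would argue this by exhibiting an involution on the set of such paths, most naturally induced by the reversal symmetry combined with swapping the choice of merge/split at some canonical position. I expect the main obstacle to be precisely this last step: finding the right fixed-point-free involution on gradient paths, especially when the two critical cells involved are themselves palindromic and therefore not obviously paired by reversal. A secondary obstacle is handling the self-symmetric partitions in the matching itself, since a naive descent of the Nekrasov--Panina matching can create $1$-cycles in the Hasse diagram of the quotient poset that must be broken by hand.
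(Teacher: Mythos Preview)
Your plan has a genuine structural gap. First, a small point: the $\mathbb{Z}_2$-action on $\CP_{n+1}$ is free, so there are no ``palindromic'' or self-symmetric cells to worry about; every orbit has size exactly two. More seriously, your whole strategy rests on the hope that the Nekrasov--Panina matching on $\CP_{n+1}$ is (nearly) equivariant, so that critical cells of $\QP_{n+1}$ arise as orbits of critical cells of $\CP_{n+1}$. This cannot work even after local repairs: for $0\le i<n-2$ the Morse function on $\CP_{n+1}$ has exactly $\binom{n}{i}$ critical $i$-cells, whereas the target count for $\QP_{n+1}$ is $\xi(n,i)=\sum_{k\le i}\binom{n}{k}$, which is strictly \emph{larger}. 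So the critical cells in the quotient are not ``half'' of anything upstairs; most of them lift to cells that are matched in $\CP_{n+1}$. Your proposed Euler-characteristic sanity check would in fact have flagged this immediately.

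What the paper does instead is construct a genuinely new matching on $\QP_{n+1}$. Each cell is represented by its unique \emph{ascending} lift, and one runs the same ``merge $k$ with the block to its right'' rule but with an extra constraint: the pair is allowed only when both cells have the same \emph{class} (the unordered pair $\{i,j\}$ recording the two largest elements outside the $(n{+}1)$-block that sit in distinct blocks). This class is monotone along gradient paths, which gives acyclicity. The critical cells are then exactly those of the shape $(i,I,\nabla,N)$ with $\nabla<i<I$, and for fixed dimension such a cell is determined by the $(n{+}1)$-set $N$, yielding the count $\xi(n,i)$. Finally, the paper does not argue parity via an abstract involution on paths; it explicitly lists all gradient paths between critical cells and shows there are always exactly zero or two, so the $\mathbb{Z}_2$ Morse differential vanishes.
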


\begin{theorem}\label{thm2intro}
The $\mathbb{Z}$-homology of $\QP_{n+1}$ is given as follows.

If n is even, then 
\[ H_i(\QP_{n+1},\mathbb{Z}) =
	\begin{cases}
    \displaystyle \bigoplus_{\xi(n,i)}\mathbb{Z}_2 , & \mathrm{if\ } i \mathrm{\ is\ odd\ and\ } 0 \leq i \leq n-2 ;\\
	\displaystyle \bigoplus_{\binom{n}{i}}\mathbb{Z} , & \mathrm{if\ } i \mathrm{\ is\ even\ and\ } 0 \leq i \leq n-2;\\
	0, & \mathrm{otherwise.}
	\end{cases}
\]

If n is odd, then

\[ H_i(\QP_{n+1},\mathbb{Z}) =
	\begin{cases}
	\displaystyle \bigoplus_{\xi(n,i)}\mathbb{Z}, & \mathrm{if\ } i = n-2;\\
	\displaystyle \bigoplus_{\xi(n,i)}\mathbb{Z}_2, & \mathrm{if\ } i \mathrm{\ is\ odd\ and\ } 0 \leq i < n-2 ;\\
	\displaystyle \bigoplus_{\binom{n}{i}}\mathbb{Z} , & \mathrm{if\ } i \mathrm{\ is\ even\ and\ } 0 \leq i \leq n-2;\\
	0, & \mathrm{otherwise.}
	\end{cases}
\]
\end{theorem}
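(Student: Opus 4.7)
The strategy is to construct an explicit optimal discrete Morse function on the face poset of $\QP_{n+1}$ and to compute the resulting Morse boundary operator over $\mathbb{Z}$, using the preceding $\mathbb{Z}_2$-homology computation as a scaffold. The face poset of $\QP_{n+1}$ consists of equivalence classes of ordered partitions of $[n+1]$ under the combined action of cyclic rotation and orientation reversal. My plan is to descend an acyclic matching analogous to that of Nekrasov and Panina \cite{nekrasov2016cyclopermutohedron} for $\CP_{n+1}$ to this quotient, verifying acyclicity directly on the identified face poset. The critical cells in dimension $i$ should number exactly $\xi(n,i)$, matching the count already implicit in the $\mathbb{Z}_2$-computation.

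Since the number of critical $i$-cells equals $\dim_{\mathbb{Z}_2} H_i(\QP_{n+1};\mathbb{Z}_2)$, the Morse function is perfect modulo $2$. Consequently the integral Morse complex $(M_\ast,\partial_\ast)$ is a free $\mathbb{Z}$-complex with $\xi(n,i)$ generators in degree $i$, and every nonzero entry of $\partial_\ast$ must be even: an odd entry would cancel two classes of $H_\ast(\QP_{n+1};\mathbb{Z}_2)$ and contradict the known dimensions. Hence each Morse incidence number is either zero or of the form $\pm 2m$, and the integral homology is determined entirely by which pairs of critical cells have vanishing versus nonvanishing Morse incidence.

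The core task is then to determine, for each ordered pair $(c_i, c_{i-1})$ of critical cells, whether the signed sum of gradient paths vanishes. I will parametrize critical cells by a small combinatorial invariant (essentially a subset of $[n]$ together with a parity datum recording the action of the reflection on the underlying partition) and enumerate gradient paths between them, tracking the signs induced by the regular CW orientation inherited from $\CP_{n+1}$ under the quotient map. The claimed dichotomy should emerge as follows: when $i$ is even and $0 \le i \le n-2$, signs cancel and the Morse differential out of degree $i$ vanishes, contributing $\binom{n}{i}$ free summands; when $i$ is odd with $i < n-2$, surviving signs produce incidences of exactly $\pm 2$, giving $\xi(n,i)$ copies of $\mathbb{Z}_2$; and when $n$ is odd and $i = n-2$, the orientation-reversing generator acts on the top-dimensional cells in a way that forces the incoming differential to vanish, yielding $\xi(n,n-2)$ free summands rather than torsion.

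The principal obstacle will be the sign analysis of the third step. Signs in discrete Morse theory on a regular CW complex are already delicate, and here they are further complicated by the need to choose representative partitions consistently across equivalence classes under the combined cyclic and orientation-reversing identifications before applying incidence formulas. I expect the argument to reduce to a parity count of a modest combinatorial invariant of each critical cell; the parity-of-$n$ dichotomy in the theorem then reflects exactly how the reflection involution on $\CP_{n+1}$ interacts with top-dimensional cells, since even and odd $n$ produce genuinely different fixed-point behaviour in the highest dimension and thereby change whether the top critical class survives the differential as a free class or as torsion.
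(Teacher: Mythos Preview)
Your overall strategy coincides with the paper's: descend the Nekrasov--Panina matching to $\QP_{n+1}$, show the critical cells in dimension $i$ number $\xi(n,i)$, observe (as you do, cleanly) that matching the $\mathbb{Z}_2$-Betti numbers forces every Morse incidence to be even, and then resolve the remaining sign ambiguity by tracking how the reflection $r$ changes the canonical orientation of a cell. The paper carries this last step out via an explicit ``sign correction'' formula for the orientation discrepancy between $\sigma$ and $r(\sigma)$, which is exactly the ``parity count of a modest combinatorial invariant'' you anticipate.

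However, your predicted dichotomy is backwards, and as stated it is internally inconsistent. In the paper's computation one finds $\tilde\partial_i=0$ when $i$ is \emph{odd} and $\tilde\partial_i$ is $2$-full-rank when $i$ is \emph{even}, not the reverse. Your claim that for even $i$ ``the Morse differential out of degree $i$ vanishes, contributing $\binom{n}{i}$ free summands'' cannot be right on its own terms: a vanishing outgoing differential gives $\ker\tilde\partial_i$ of rank $\xi(n,i)$, and if the incoming differential from odd degree $i+1$ carried the $\pm 2$'s you posit, you would get $\mathbb{Z}_2^{\xi(n,i)}$, not $\mathbb{Z}^{\binom{n}{i}}$. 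The correct picture is that at even $i$ the outgoing map is $2$-full-rank (kernel of rank $\xi(n,i)-\xi(n,i-1)=\binom{n}{i}$) while the incoming map from odd $i+1$ vanishes, yielding the free part; at odd $i<n-2$ the outgoing map vanishes and the incoming $2$-full-rank map produces the torsion. Likewise, your account of the top degree for odd $n$ is off: there is no incoming differential at $n-2$ in any case; the point is simply that $n-2$ is then odd, so the outgoing map vanishes and $H_{n-2}=\mathcal{M}_{n-2}\cong\mathbb{Z}^{\xi(n,n-2)}$. These are prediction errors rather than strategic ones---carrying out the sign analysis honestly would correct them---but you should not build the write-up around the reversed pattern.
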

In both the above theorems $\xi(n, i)$ denotes the partial sum $\sum_{k=0}^i \binom{n}{k}$ of binomial coefficients. 

The article is organized as follows. 
\Cref{sec2} is lists the relevant results from discrete Morse theory. 
We mainly recall the construction of the Morse complex and a generic formula for the boundary homomorphisms. 
Next, in \Cref{sec3} we introduce the cyclopermutohedron; it is a regular CW complex.
The new result here is a closed form formula for the degree of the attaching homeomorphisms in terms of underlying combinatorics. 
In \Cref{sec4} we recall the discrete Morse function on this complex first described in \cite{nekrasov2016cyclopermutohedron}.
We also give a different proof of the fact that this Morse function is optimal, i.e., in the Morse complex all boundary homomorphisms vanish.
The reproof is necessary because of the strategy we employ can be reused in our new results. 
Next in \Cref{sec5} we introduce the new object the bi-cyclopermutohedron (denoted $\QP_{n+1}$) and define a discrete Morse function on it. 
Using involved combinatorial arguments we show that the number of paths between any two critical cells is either zero or two. 
As a result we compute the homology with $\mathbb{Z}_2$ coeffecients, see \Cref{mod2thm}.
In  \Cref{sec6} we turn our attention to the integral homology of $\QP_{n+1}$. 
The boundary maps in the Morse complex do not vanish. 
As a result the computations are quite intricate. 
However using the combinatorial formula for the degree of the attaching maps, a clear description of the gradient paths and the good path lemma (\Cref{good path lemma}) we explictly compute the boundary homomorphisms.
In \Cref{mainthm} we compute the integer homology of $\QP_{n+1}$ and also show that there is only $2$-torsion. 

\section{Discrete Morse Theory}\label{sec2}

In this section we quickly recall some results from discrete Morse theory that we need, see \cite{Forman1998} for more details.
Recall that the homeomorphism type of a regular CW complex is completely determined by its face poset. 
Hence, for two cells $\sigma, \tau$ by $\sigma<\tau$ we mean $\sigma\subset\overline{\tau}$.
Moreover, the superscript notation $\sigma^p$ denotes the dimension of the cell. 

\begin{definition}
	A \emph{discrete vector field} $V$ on a regular CW complex $\cK$ is a collection of pairs $(\sigma^p, \tau^{p+1})$ where $\sigma < \tau$ such that each cell is in at most one pair of $V$.
\end{definition}

\begin{definition}
Given a discrete vector field $V$ on $\cK$, a $V$-path is a sequence of cells
$$\sigma^p_1, \tau^{p+1}_1,\dots, \sigma^p_t, \tau^{p+1}_t, \sigma^p_{t+1}$$
such that, for each $i \in [t]$, $\sigma_{i+1} \neq \sigma_i$, $(\sigma_i, \tau_i)$ is a pair in $V$ and $\tau_i > \sigma_{i+1}$.  
A path is called \emph{closed} if $\sigma_1=\sigma_{t+1}$.
\end{definition}

\begin{definition}
    A discrete vector field is called a \emph{discrete Morse function} if there are no closed $V$-paths. In this case, $V$-paths are called gradient paths.
\end{definition}

For $\sigma^{p} < \tau^{p+1}$ in $\cK$, the \emph{incidence number} $[\tau:\sigma]$ is the degree of the attaching homeomorphism.
Consider two distinct $p$-cells $\sigma_1, \sigma_2$ and a $(p+1)$-cell $\tau$ such that $\sigma_1< \tau$ and $\sigma_2<\tau$. 
Fixed orientations on $\sigma_1$ and $\tau$, induce an orientation on $\sigma_2$ so that $[\tau:\sigma_1]\cdot [\tau: \sigma_2]=-1$.
Let $V$ be a discrete Morse function on $\cK$ and let $C:=\sigma_1, \tau_1, \dots , \sigma_t, \tau_t, \sigma_{t+1}$ be a gradient path. 
An orientation on $\sigma_1$ induces an orientation on each $\sigma_i$ in turn, and, in particular, on $\sigma_{t+1}$. 
Define $w(C)=1$ if the fixed orientation on $\sigma_1$ induces the fixed orientation on $a_{t+1}$, and $w(C)=-1$ otherwise.

A cell $\sigma$ is \emph{critical} for a discrete Morse function $V$, if it is not paired in $V$. 
Let $\mathcal{M}_p$ denote the free abelian group generated by the critical $p$-cells. 
The Morse complex $\mathcal{M}_{\bullet}$ on $\cK$ is defined as follows,
$$ \mathcal{M}_{\bullet}: 0\longrightarrow \dots  \mathop{\longrightarrow}^{\mathrm{\tilde{\partial}}} \mathcal{M}_{p+1} \mathop{\longrightarrow}^{\mathrm{\tilde{\partial}}} \mathcal{M}_{p} \mathop{\longrightarrow}^{\mathrm{\tilde{\partial}}}  \dots  \mathop{\longrightarrow}^{\mathrm{\tilde{\partial}}} \mathcal{M}_0 \rightarrow 0;$$
the boundary homomorphism is given by 
\begin{align}
    \tilde{\partial}\tau^{p+1} &= \sum_{\sigma^p < \tau} \langle \tilde{\partial}\tau, \sigma \rangle\; \sigma,\label{bdryeq1}\\
    \langle \tilde{\partial}\tau, \sigma \rangle &:= \sum_{\tilde{\sigma}^p < \tau} [\tau:\tilde{\sigma}] \sum_{c \in \Gamma(\tilde{\sigma}, \sigma)} w(c)\label{bdryeq2},
\end{align}
where, $\Gamma(\tilde{\sigma}, \tau)$ denotes the set of all gradient paths from $\tilde{\sigma}$ to $\tau$.

\section{the cyclopermutohedron}\label{sec3}
An important problem in topological combinatorics is to compute topological invariants of combinatorially defined complexes.
For example, given a poset one would like to know whether its geometric realization is a PL-manifold, what is its homotopy type or what are the homology groups?
Interestingly, for a lot of posets their geometric realizations are homotopy equivalent to a wedge of spheres. 
Cohen-Macaulay property in general and Shellability in particular are extensively studied topics in this area. 
See for example \cite{wachs2006poset}.

For instance, consider the poset of unordered partitions of $[n]$. 
Its geometric realization has the homotopy type of wedge of sphere. 
On the other hand geometric realization of the poset of ordered partitions of $[n]$ is the permutohedron $\perm_n$.
Recall that $\perm_n$, is an $(n-1)$-dimensional convex polytope in  $\mathbb{R}^n$ formed by taking the convex hull of all points that are obtained by permuting the coordinates of the point $(1, 2,\dots, n)$.

A natural question now is what would happen if one were to replace the linear ordering by the cyclic ordering? 
What is the homotopy type of the resulting poset?
To the best of our knowledge the poset of cyclically ordered partitions first appeared in the work of Panina. 
She constructed a CW complex, called the \emph{cyclopermutohedron} and denoted $\CP_{n+1}$, whose face poset is isomorphic to the poset of cyclically ordered partitions. 
She showed that it can not be realized as a polytope in any Euclidean space. 
However, it can be realized as a \emph{virtual polytope} (see \cite{Panina2012, panina2015cyclopermutohedron}). 
Intuitively, a virtual polytope is a formal difference of two polytopes. 
However, the reader will see that each closed cell is combinatorially equivalent to the product of at most $3$  permutohedra of appropriate dimension.  

With her coauthors Panina also computed the homology groups of $\CP_{n+1}$ in \cite{nekrasov2016cyclopermutohedron}.
This was done by finding an optimal discrete Morse function and then computing the number of critical cells. 
In the remaining of this section we recall some relevant results proved in \cite{nekrasov2016cyclopermutohedron} and provide different proof of vanishing of boundary maps in the Morse complex.  
In particular, we find a closed form formula for the degree of the attaching homeomorphisms. 
We use it to establish ``the good path lemma", both these results are crucial for theorems proved in subsequent sections.


\begin{definition}
The regular CW complex cyclopermutohedron $\CP_{n+1}$ is defined as:
\begin{itemize}
	\item For $k = 0, 1 ,\dots, n-2$, the k-cells of $\CP_{n+1}$ are labeled by (all possible) cyclically ordered partitions of the set $[n + 1]$ into $(n-k+1)$ non-empty parts.
	
	\item A (closed) cell $F$ contains a cell $F^{'}$ whenever the label of $F^{'}$ refines the label of $F$. Here, by refinement we mean orientation preserving refinement.
\end{itemize}
\end{definition}
\cref{fig: Complex n=4} depicts the complex for $n=3$.
\begin{figure}[!ht]
\centering
\definecolor{zzttqq}{rgb}{0.6,0.2,0.}
\definecolor{ttqqqq}{rgb}{0.2,0.,0.}
\begin{tikzpicture}[line cap=round,line join=round,>=triangle 45,x=1.0cm,y=1.0cm]
\clip(1.,0.) rectangle (11.,11.);
\draw[line width=1.pt,color=zzttqq] (6.,3.) -- (8.,4.) -- (8.,6.) -- (6.,7.) -- (4.,6.) -- (4.,4.) -- cycle;
\draw[line width=1 pt,color=zzttqq] (6.,7.) -- (8.,4.) -- (4.,4.) -- cycle;
\draw[line width=1 pt,color=zzttqq] (4.,6.) -- (6.,3.) -- (8.,6.) -- cycle;
\draw [line width=1.2pt,color=zzttqq] (6.,3.)-- (8.,4.);
\draw [line width=1.2pt,color=zzttqq] (8.,4.)-- (8.,6.);
\draw [line width=1.2pt,color=zzttqq] (8.,6.)-- (6.,7.);
\draw [line width=1.2pt,color=zzttqq] (6.,7.)-- (4.,6.);
\draw [line width=1.2pt,color=zzttqq] (4.,6.)-- (4.,4.);
\draw [line width=1.2pt,color=zzttqq] (4.,4.)-- (6.,3.);
\draw [line width=1.2pt,color=zzttqq] (6.,7.)-- (8.,4.);
\draw [line width=1.2pt,color=zzttqq] (8.,4.)-- (4.,4.);
\draw [line width=1.2pt,color=zzttqq] (4.,4.)-- (6.,7.);
\draw [line width=1.2pt,color=zzttqq] (4.,6.)-- (6.,3.);
\draw [line width=1.2pt,color=zzttqq] (6.,3.)-- (8.,6.);
\draw [line width=1.2pt,color=zzttqq] (8.,6.)-- (4.,6.);
\draw [line width=1pt] (5.992847529397043,8.339721192317038) circle (0.5cm);
\draw [line width=1pt] (8.977569784356508,6.816903715296904) circle (0.5cm);
\draw [line width=1pt] (8.997874017383442,3.14183753742165) circle (0.5cm);
\draw [line width=1pt] (6.013151762423979,1.6799327594823228) circle (0.5cm);
\draw [line width=1pt] (2.9878210414106445,6.816903715296904) circle (0.5cm);
\draw [line width=1pt] (2.9878210414106445,3.1824460034755204) circle (0.5cm);
\draw (5.6,1.1339300824662997) node[anchor=north west] {\{1\}};
\draw (4.557343036332395,1.9943448439324962) node[anchor=north west] {\{2\}};
\draw (5.5308698365027745,2.793431837623781) node[anchor=north west] {\{3\}};
\draw (6.437508675377335,1.9674568826366776) node[anchor=north west] {\{4\}};
\draw (7.7008794701464089,3.446294753906703) node[anchor=north west] {\{1\}};
\draw (8.534769656451194,7.882808367716779) node[anchor=north west] {\{1\}};
\draw (5.503981875206955,9.461646238986804) node[anchor=north west] {\{1\}};
\draw (1.70127793324965193,7.129945451433857) node[anchor=north west] {\{1\}};
\draw (2.5731940939627174,2.693431837623781) node[anchor=north west] {\{1\}};
\draw (8.532150163447,2.612767953736325) node[anchor=north west] {\{2\}};
\draw (8.5209462930288,6.26953068996766) node[anchor=north west] {\{2\}};
\draw (4.724230997628213,8.6356712839997) node[anchor=north west] {\{2\}};
\draw (2.51302100752614,7.85592040642096) node[anchor=north west] {\{2\}};
\draw (2.546306132666899,4.252933592781262) node[anchor=north west] {\{2\}};
\draw (8.515433540338649,4.172269708893806) node[anchor=north west] {\{3\}};
\draw (7.68190674016827,7.07616952884222) node[anchor=north west] {\{3\}};
\draw (5.503981875206955,7.875256522533504) node[anchor=north west] {\{3\}};
\draw (2.4925302100752614,6.350194573855116) node[anchor=north west] {\{3\}};
\draw (1.7127793324965193,3.50007067649834) node[anchor=north west] {\{3\}};
\draw (3.3991690489498223,3.50007067649834) node[anchor=north west] {\{4\}};
\draw (9.34896034050903,3.446294753906703) node[anchor=north west] {\{4\}};
\draw (9.34896034050903,7.103057490138038) node[anchor=north west] {\{4\}};
\draw (6.3106207140815165,8.608783322703882) node[anchor=north west] {\{4\}};
\draw (3.3091690489498223,7.07616952884222) node[anchor=north west] {\{4\}};
\draw [line width=1.2pt] (8.00296659906362,8.766110085882675) circle (0.5cm);
\draw [line width=1.2pt] (8.00296659906362,1.1926311668358802) circle (0.5cm);
\draw (7.439915088505902,2.236336495594864) node[anchor=north west] {\{1,2\}};
\draw (8.354105772563737,1.5372495019035792) node[anchor=north west] {\{3\}};
\draw (7.520578972393358,0.7037227017332014) node[anchor=north west] {\{4\}};
\draw (7.493691011097539,9.84562954231154) node[anchor=north west] {\{1\}};
\draw (7.46680304980172,8.178575941970784) node[anchor=north west] {\{3\}};
\draw (8.354105772563737,9.03899070343698) node[anchor=north west] {\{2,4\}};
\draw [->,line width=1.2pt] (7.617186171551853,8.07576616296688) -- (6.,6.);
\draw [->,line width=1.2pt] (7.576577705497983,1.8423666236978036) -- (6.806330507088282,3.403165253544141);
\begin{scriptsize}
\draw [fill=ttqqqq] (6.,3.) circle (1.0pt);
\draw [fill=ttqqqq] (8.,4.) circle (1.0pt);
\draw [fill=ttqqqq] (8.,6.) circle (1.0pt);
\draw [fill=ttqqqq] (6.,7.) circle (1.0pt);
\draw [fill=ttqqqq] (4.,6.) circle (1.0pt);
\draw [fill=ttqqqq] (4.,4.) circle (1.0pt);
\end{scriptsize}
\end{tikzpicture}
\caption{The Complex $\CP_4$}
\label{fig: Complex n=4}
\end{figure}
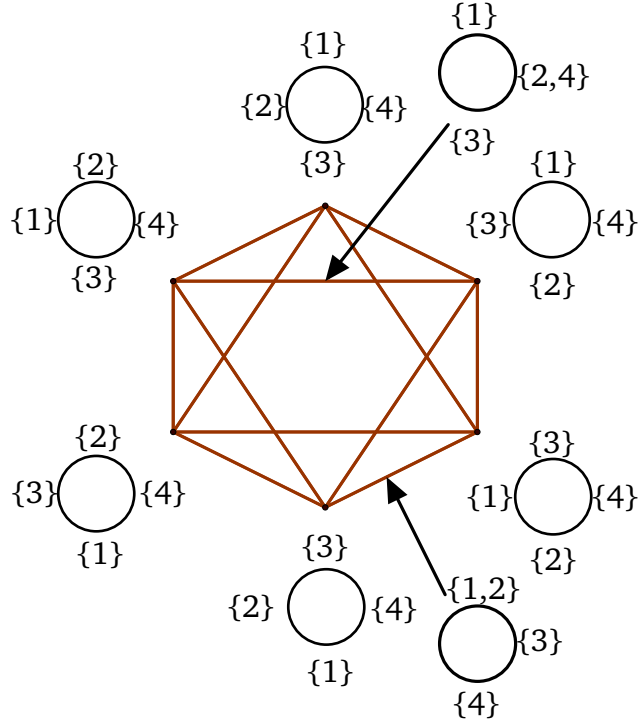

Throughout this paper we adopt following conventions and notations when dealing with ordered partitions.
\begin{enumerate}
    \item A cyclically ordered partition is represented by linear partition in which the block containing $n+1$ appears last.  
    \item A subset of $[n+1]$ containing the element $n+1$ will be called an $n+1$-set. 
    Given a partition of $[n+1]$, the letter $N$ denotes the $n+1$-set.
    \item The triangle $\nabla$ denotes (a possibly empty) string of singletons arranged in decreasing order.
    \item Given two subsets $I$ and $J$, the expression ``$I<J$'' indicates that $i < j$ for each $i \in I$ and $j \in J$. Similarly, the expression ``$k < \nabla$'' indicates that $k$ is less than the element in each singleton of $\nabla$.
    \item The set ``$I-\{m\}$'' is denoted ``$I-m$'' and the braces for the singleton will be omitted \textit{i.e.,} the block ``$\{s\}$'' is denoted by ``$s$''.
    \item The set ``$\{a_1,a_2,\dots,a_k\}$'' will be denoted ``$a_1a_2\dots a_k$'' when there is no ambiguity.
\end{enumerate}


\subsection{A formula for the incidence numbers} \label{canonical orientation}
The aim of this section is obtain a combinatorial formula describing the degree of an attaching homeomorphism in a cyclopermutohedron.
Note that the vertices of $\CP_{n+1}$ are in bijection with the elements of the group $S_n$.
Two vertices are joined by an edge whenever their labels differ by a transposition.
Given a vertex $v$ in $\alpha=(I_1,I_2,\dots,I_l)$, there are $\dim(\alpha)$ many vertices of $\alpha$ that are connected to $v$ by an edge. 
We call such vertices neighbors of $v$ and order them as follows. 
The first neighbor of $v$ is obtained by interchanging the first two entries of $v$ that belong to the same block, the second neighbor is obtained by interchanging the second two entries of $v$ that belong to the same block and so on. 
This ordering is called an \emph{orientation} of the cell with respect to vertex $v$. 

\begin{definition}
The \emph{principal vertex} $\mathrm{PV}(\alpha)$ of a cell $\alpha$ is the vertex with the label $(\hat{I_1}, \hat{I_2},\dots,\hat{I_l})$, where $\hat{I_j}$ is a partition of the set $I_j$ into singletons coming in increasing order.
The orientation of the cell $\alpha$  related to its principal vertex $\mathrm{PV}(\alpha)$ is called the \emph{canonical orientation} of $\alpha$.
\end{definition}

\begin{example}
For the cell $\alpha=(\{1\} \{2,4,5\} \{3\} \{6,7,8\})$, the principal vertex $\mathrm{PV}(\alpha)$ is given by $(\{1\} \{2\}\{4\}\{5\} \{3\} \{6\} \{7\}\{8\})$ and the $\alpha$-neighbors of $\mathrm{PV}(\alpha)$ are ordered as follows:\\
	$v_1 = (1,4,2,5,3,6,7,8)$,	$v_2 = (1,2,5,4,3,6,7,8)$,	$v_3 = (1,2,4,5,3,7,6,8)$ etc.
\end{example}

\begin{proposition}
The free and transitive action of the symmetric group $S_{n+1}$ on the vertices of $\CP_{n+1}$ preserves the canonical ordering.
\end{proposition}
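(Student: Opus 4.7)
The plan is to exploit the principle that the recipe ordering the $\alpha$-neighbours of a vertex $v$ is a purely \emph{positional} construction, whereas the symmetric-group action is a pure \emph{relabelling}; these two operations commute by inspection. A vertex of $\CP_{n+1}$ is a linear ordering $v = (a_1, \dots, a_{n+1})$ (normalised so that $a_{n+1} = n+1$), and $\sigma \in S_{n+1}$ acts by $\sigma \cdot v = (\sigma(a_1), \dots, \sigma(a_{n+1}))$, cyclically renormalised if necessary. Freeness and transitivity on the normalised representatives follow from the regular action of $S_n$ on itself. I would first extend the action to the whole face poset via $\sigma \cdot (I_1, \dots, I_l) = (\sigma(I_1), \dots, \sigma(I_l))$, verify that this preserves refinement, and invoke the regularity of $\CP_{n+1}$ to lift the face-poset automorphism to a cellular self-homeomorphism.

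The heart of the argument is then a one-line verification. The $j$-th $\alpha$-neighbour of $v$ is obtained by finding the $j$-th pair of adjacent positions $(p, p+1)$ in $v$ whose entries lie in a common block of $\alpha$, and transposing them. Applying $\sigma$ permutes labels without disturbing positions, and the block of $\sigma(\alpha)$ containing $\sigma(a_p)$ is exactly $\sigma(I)$ when $I$ is the block of $\alpha$ containing $a_p$. Hence the $j$-th adjacent same-block pair in $(v, \alpha)$ and in $(\sigma \cdot v, \sigma \cdot \alpha)$ occurs at the same positions, and the positional transposition commutes with $\sigma$. Thus $\sigma$ sends the $j$-th $\alpha$-neighbour of $v$ to the $j$-th $(\sigma \cdot \alpha)$-neighbour of $\sigma \cdot v$; specialising $v = \mathrm{PV}(\alpha)$ gives equivariance of the canonical ordering.

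The only genuine subtlety I anticipate is that in general $\sigma \cdot \mathrm{PV}(\alpha) \neq \mathrm{PV}(\sigma \cdot \alpha)$, because sorting each block in increasing order is not an $S_{n+1}$-equivariant operation (this can already be seen for $\alpha = (\{1,3\},\{2\},\{4\})$ with $\sigma = (1\,3)$). Consequently ``preservation of the canonical ordering'' must be read as equivariance of the positional rule that \emph{defines} the canonical ordering, rather than as pointwise invariance of the principal vertex or literal orientation-preservation of each cell homeomorphism. Making this distinction explicit up front is the only conceptual hurdle; after that the proposition is essentially tautological, and the statement is exactly the form required for consistent bookkeeping of incidence numbers in the formula from \Cref{canonical orientation}.
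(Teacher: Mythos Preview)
Your proposal is correct; the paper's own proof is the single word ``Clear,'' so you have supplied far more detail than the authors. Your positional-versus-relabelling argument is precisely the content behind that one word, and your third-paragraph caveat---that $\sigma \cdot \mathrm{PV}(\alpha) \neq \mathrm{PV}(\sigma \cdot \alpha)$ in general, so the proposition asserts equivariance of the ordering rule rather than literal invariance of principal vertices---is both correct and a genuine clarification of the paper's somewhat loose phrasing; it matches exactly how the statement is used downstream, where one always takes $g_{\sigma,\tau}$ so that $g_{\sigma,\tau}(\mathrm{PV}(\sigma)) = \mathrm{PV}(\tau)$.
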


\begin{proof}
Clear.
\end{proof}
\begin{example}
Here is an example that illustrates the above Proposition. Let $\sigma= (12345,6,7) \in \CP_7$, and let $w=(24) \in S_{7}$.
\begin{align*}
    v_0=(1,2,3,4,5,6,7)& \mapsto (1,4,3,2,5,6,7)\\
    v_1=(2,1,3,4,5,6,7)& \mapsto (4,1,3,2,5,6,7)\\
    v_2=(1,3,2,4,5,6,7)& \mapsto (1,3,4,2,5,6,7)\\
    v_3=(1,2,4,3,5,6,7)& \mapsto (1,2,3,4,5,6,7)\\
    v_4=(1,2,3,5,4,6,7)& \mapsto (1,4,3,5,2,6,7)
\end{align*}
\end{example}

Given a pair of cells $\tau^{p-1} <\sigma^{p}$ in $\CP_{n+1}$, let $v_{\sigma}$ and $v_{\tau}$ denote the principal vertices of $\sigma$ and $\tau$ respectively. Let $(v_1,\dots, v_p)$ be the ordering on neighbors of $v_{\sigma}$ in $\sigma$. 
Since $v_{\sigma}$ and $v_{\tau}$ also represent elements of $S_n$, there exists a permutation $g_{\sigma,\tau} \in S_n$ such that $g_{\sigma,\tau} v_{\sigma} = v_{\tau}$. Moreover, there is exactly one index $i_{\tau}\in \{1,2,3,\dots,p\}$ such that $g_{\sigma,\tau} v_{i_{\tau}}$ is not adjacent to $v_{\tau}$ in $\tau$.

Let $\Delta_m$ be the free abelian group with basis the $m$-cells $\sigma_{\alpha}^m$ of $\CP_{n+1}$. 
We define the boundary homomorphism $\partial_m: \Delta_m \rightarrow \Delta_{m-1}$ by specifying its values on the basis elements. 
Denote by $\langle \partial_m \sigma, \tau \rangle$ the coefficient of $\tau^{m-1}$ in $\partial_m \sigma$ and define it as:
\begin{align*}
    \langle \partial_m \sigma, \tau \rangle :=&\  \mathrm{\mathrm{sign}}(g_{\sigma,\tau})\cdot (-1)^{i_{\tau}}.
\end{align*}
The remainder of the section is devoted to prove that the above integer is indeed the degree of the homeomorphism that attaches the cell $\tau$ to the boundary of $\sigma$.
\begin{lemma}
	Let $\sigma =(I_1,I_2,\dots,I_k)$ and $\tau=(I_1,\dots,J_1,J_2,\dots,I_k)$ with some $p \in \{1,2,\dots,k\}$ such that $I_p=J_1 \cup J_2$. For every $i \in \{1,2,\dots,k\}$, denote  $|I_i|=r_i$. Then  $$\langle\partial \sigma, \tau\rangle= (-1)^{\sum_{i=1}^{p-1}r_{i}+|J_1|-(p-1)}\cdot \mathrm{sign}(g_{\sigma,\tau}).$$
\end{lemma}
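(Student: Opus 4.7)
\medskip

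The plan is to compute the index $i_\tau$ appearing in the definition $\langle \partial_m \sigma, \tau\rangle = \mathrm{sign}(g_{\sigma,\tau})\cdot(-1)^{i_\tau}$ and show it equals $\sum_{i=1}^{p-1} r_i + |J_1| - (p-1)$ modulo $2$. To do so, I would first make the canonical ordering on the neighbours of $v_\sigma$ completely explicit. Writing each block as $I_i = \{a^{(i)}_1 < a^{(i)}_2 < \dots < a^{(i)}_{r_i}\}$, the principal vertex lists the elements blockwise in this increasing order. The $\sigma$-neighbours of $v_\sigma$ are therefore indexed by pairs $(i,j)$ with $i \in [k]$, $j \in [r_i-1]$, where $v^{(i,j)}$ swaps the $j$-th and $(j{+}1)$-st entries inside block $i$. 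By the stated ordering rule, the global index of $v^{(i,j)}$ is $\sum_{i'<i}(r_{i'}-1)+j$.

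Next I would unpack $g:=g_{\sigma,\tau}$. Since $v_\tau$ is obtained from $v_\sigma$ by replacing the block $\hat{I_p}$ with the concatenation $\hat{J_1}\hat{J_2}$ and leaving all other blocks untouched, $g$ acts as the identity on every label outside $I_p$ and as the unique bijection $I_p \to I_p$ that sends $(a^{(p)}_1,\dots,a^{(p)}_{r_p})$ to the block-$p$ entries of $v_\tau$. (This matches the label-action convention established by the worked example just before the lemma.)

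Now I would examine each $gv^{(i,j)}$ and check adjacency to $v_\tau$ in $\tau$. If $i\ne p$, then $g$ is the identity on block $i$, so $gv^{(i,j)}$ differs from $v_\tau$ only by an adjacent transposition inside the (unchanged) block $i$ of $\tau$; hence $gv^{(i,j)}$ is a $\tau$-neighbour of $v_\tau$. If $i=p$, then $gv^{(p,j)}$ differs from $v_\tau$ by swapping the entries in the $j$-th and $(j{+}1)$-st positions of the concatenated block $\hat{J_1}\hat{J_2}$. For $j\in[\,|J_1|-1\,]$ this is an adjacent swap inside $J_1$, and for $j\in\{|J_1|+1,\dots,r_p-1\}$ it is an adjacent swap inside $J_2$; in both cases the resulting vertex still refines $\tau$ and is a $\tau$-neighbour. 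The sole exception is $j=|J_1|$: this swap exchanges the last element of $J_1$'s block with the first element of $J_2$'s block, producing a vertex that does not refine the partition $(J_1,J_2)$, hence does not lie in $\tau$.

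Therefore the unique bad index is $i_\tau = \sum_{i=1}^{p-1}(r_i-1)+|J_1| = \sum_{i=1}^{p-1}r_i+|J_1|-(p-1)$, and the formula follows from the definition. The only real obstacle is the case analysis in the third step, and specifically the careful verification that a transposition straddling the $J_1$--$J_2$ interface genuinely leaves the closed cell $\tau$; this is handled by the observation that membership in $\tau$ forces every $J_1$-entry to precede every $J_2$-entry, a condition exactly violated by the swap at position $|J_1|$. All other bookkeeping (the label-action of $g$, the product structure of the neighbour list, and the accumulation of block-sizes to produce the global index) is mechanical once this identification is in place.
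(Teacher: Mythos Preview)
Your argument is correct and follows essentially the same route as the paper: both identify $i_\tau$ by listing the $\sigma$-neighbours of $v_\sigma$ blockwise and observing that the unique swap destroyed upon passing to $\tau$ is the one straddling the $J_1$--$J_2$ interface, at global index $\sum_{i<p}(r_i-1)+|J_1|$. The only difference is that the paper first treats the special case $\mathrm{PV}(\sigma)=\mathrm{PV}(\tau)$ (so $g_{\sigma,\tau}=\mathrm{id}$) and then remarks that the general case follows because $g_{\sigma,\tau}$ carries the missing index to the missing index, whereas you handle the general $g_{\sigma,\tau}$ directly by tracking how the label-action moves each $v^{(i,j)}$.
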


\begin{proof}
	Without loss of generality assume $\mathrm{PV}(\sigma)= \mathrm{PV}(\tau)= (1,2,\dots,n+1)=v_0$. The neighbors of $\mathrm{PV}(\sigma)$ are ordered as follows:
	\begin{equation*}
	\begin{gathered}
	v_0=(1,2,\dots,n+1)\\
	v_1=(2,1,3,\dots,n+1)\\
	\vdots\\
	v_{r_1-1}=(1,2,\dots,r_1,r_1-1,\dots,n+1)\\
	v_{r_1}=(1,2,\dots,r_1,r_1+2,r_1+1,\dots,n+1)\\
	\vdots\\
	v_{r_{p-1}+|J_1|-(p-1)-1}=(1,2,\dots,r_{p-1}+|J_1|,r_{p-1}+|J_1|-1,\dots,n+1)\\
	v_{r_{p-1}+|J_1|-(p-1)}=(1,2,\dots,r_{p-1}+|J_1|+1,r_{p-1}+|J_1|,\dots,n+1)\\
	\vdots
	\end{gathered}
	\end{equation*}

From the list, it is clear that the index $i_\tau$ such that $v_{i_{\tau}}$ is not a vertex of $\tau$ is $r_{p-1}+|J_1|-(p-1)$, since the interchanging is consistent with only $\sigma$ and not with $\tau$. A similar argument works for the case where $\mathrm{PV}(\tau) \neq (1,2,\dots,n+1)$. Observe the fact that the missing index corresponding to the cell which 
\begin{itemize}
	\item has same partition structure (i.e., similar block structure) as $\tau$, 
	\item is contained in the boundary of $\sigma$ and 
	\item has same principal vertex as $\sigma$,
\end{itemize}

is the unique index $i_{\tau}$ such that $g_{\sigma,\tau}v_{i_{\tau}}$ is not adjacent to $v_{\tau}$ in $\tau$. Briefly, a missing index is taken to another missing index by the permutation $g_{\tau}$.
\end{proof}	

\begin{theorem}
	The composition $ \Delta_m \xrightarrow{\partial_m} \Delta_{m-1} \xrightarrow{\partial_{m-1}} \Delta_{m-2}$ is zero. 
\end{theorem}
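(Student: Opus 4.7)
The strategy is to verify cancellation in every length-two interval of the face poset. Because $\CP_{n+1}$ is a regular CW complex, its face poset satisfies the \emph{diamond property}: for every pair $\rho^{m-2} < \sigma^m$ there are exactly two cells $\tau_1^{m-1}, \tau_2^{m-1}$ with $\rho < \tau_i < \sigma$. Consequently it suffices to prove that for every such interval,
\[
\langle \partial\sigma, \tau_1\rangle\langle \partial\tau_1, \rho\rangle
+ \langle \partial\sigma, \tau_2\rangle\langle \partial\tau_2, \rho\rangle = 0.
\]

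Because $\rho$ is obtained from $\sigma$ by two successive refinements, the two intermediate cells fall into two combinatorial types. \textbf{(Case 1)} Two distinct blocks $I_p, I_q$ of $\sigma$ (say $p<q$) split as $I_p = J_1 \cup J_2$ and $I_q = K_1 \cup K_2$; here $\tau_1$ performs only the $I_p$-split and $\tau_2$ only the $I_q$-split. \textbf{(Case 2)} A single block $I_p$ of $\sigma$ splits into three consecutive parts $J_1 \cup J_2 \cup J_3$; here $\tau_1$ keeps $J_1\cup J_2$ together while splitting off $J_3$, while $\tau_2$ keeps $J_2 \cup J_3$ together and splits off $J_1$.

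In each case I would apply the preceding lemma to each of the four covering relations of the diamond. Writing $R_{p-1} = \sum_{i<p}|I_i|$, the parity exponent for a split of the block at position $p$ into $(J_1,J_2)$ is $R_{p-1} + |J_1| - (p-1)$. Care is needed because a split at position $p$ shifts all subsequent block positions by one: in Case 1 the $I_q$-split sits at position $q$ in $\sigma$ but at position $q+1$ inside $\tau_1$, and in Case 2 the ``second'' split sits at position $p$ inside $\tau_1$ but at position $p+1$ inside $\tau_2$. A short arithmetic reduction then shows that in both cases the total exponent along the $\tau_1$-path and along the $\tau_2$-path differ by exactly $1$ modulo $2$, so the two $(-1)^{\bullet}$ contributions are opposite.

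For the permutation factors, the cocycle identity $g_{\tau,\rho}\, g_{\sigma,\tau} = g_{\sigma,\rho}$ follows from freeness and transitivity of the $S_{n+1}$-action on vertices, since both sides send $v_\sigma$ to $v_\rho$. Taking signs yields
\[
\mathrm{sign}(g_{\sigma,\tau_1})\cdot\mathrm{sign}(g_{\tau_1,\rho}) \;=\; \mathrm{sign}(g_{\sigma,\rho}) \;=\; \mathrm{sign}(g_{\sigma,\tau_2})\cdot\mathrm{sign}(g_{\tau_2,\rho}),
\]
so the permutation-sign products along both paths agree. Combined with the opposite $(-1)$-signs from the parity calculation, the two contributions in the diamond cancel and $\partial_{m-1}\partial_m = 0$. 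I expect the main obstacle to be the careful bookkeeping of how block positions shift after an intermediate split, most delicately in Case 2, where the combinatorial positions change between $\tau_1$ and $\tau_2$ and one has to track where exactly $J_2$ sits within each partition.
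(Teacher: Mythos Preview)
Your proposal is correct and follows essentially the same approach as the paper: both arguments split into the same two diamond types (your Case~1 is the paper's Case~2 and vice versa), apply the preceding lemma to each of the four covering relations, use the cocycle identity $g_{\tau,\rho}\,g_{\sigma,\tau}=g_{\sigma,\rho}$ to match the permutation signs along the two routes, and then check that the residual parity exponents differ by exactly one. The paper carries out the explicit arithmetic you allude to, but the structure and ingredients of the argument are identical.
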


\begin{proof} 
	Let $\sigma^{k+2}=(I_1,I_2,\dots,I_{n-1-k})$ , $\tau^{k}=(J_1,\dots,J_{n+1-k})$, then it is enough to show that $\langle \partial^2\sigma,\tau\rangle=0$. If $\tau^{k}<\sigma^{k+2}$ then $\sigma$ and $\tau$ satisfy exactly one of the following relations
	\begin{enumerate}
		\item $\exists i,j,k \in [n+1-k]$ and $\exists p \in [n-1-k]$ such that $J_i\cup J_j \cup J_k =I_p$,
		\item $\exists i,j,s,t \in [n+1-k]$ and $\exists p,q \in [n-1-k]$ such that $J_i \cup J_j =I_s$ and $J_k\cup J_l=I_t$.
	\end{enumerate}

\textbf{Case 1:} Without loss of generality assume $J_1\cup J_2 \cup J_3 = I_1$, so only $I_1$ is involved in the computation of $\langle \partial^2\sigma,\tau \rangle$. We can also assume  that $\sigma$ has the minimum number of blocks, i.e., $s=3$. Thus, $\sigma=(I_1,I_2,I_3)$ and $\tau=(J_1,J_2,J_3,J_4,J_5)$ and we have
	
	\begin{figure}[!ht]
		\centering
		\begin{tikzpicture}
		\sffamily
		\node (a0) at (0,0) {$(I_1,I_2,I_3)$};
		\node [below left=1cm of a0] (a1)  {$\omega_1=(J_1\cup J_2,J_3,J_4,J_5)$};
		\node [below right=1cm of a0] (a2) {$\omega_2=(J_1,J_2\cup J_3,J_4,J_5)$};
		\draw [blue, thick] (a0) -- (a1);
		\draw [blue, thick] (a0) -- (a2);
		\node[below=2cm of a0] (b) {$(J_1,J_2,J_3,J_4,J_5)$};
		\draw [blue, thick] (a1) -- (b);
		\draw [blue, thick] (a2) -- (b);
		
		\end{tikzpicture}
		\caption{The interval $[\tau,\sigma]$ in case 1.}
	\end{figure}
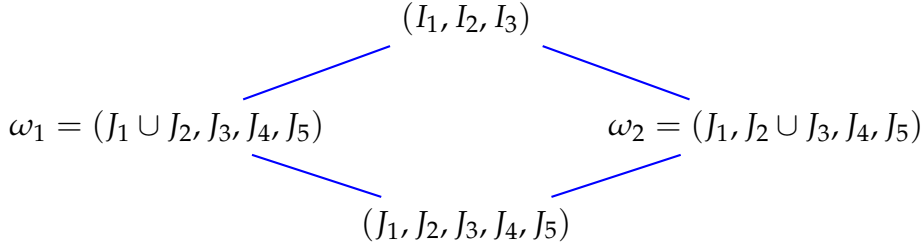
	\begin{itemize}
		\item $\langle \partial\sigma,\omega_1 \rangle= \mathrm{sign}(g_1)\cdot(-1)^{|J_1|+|J_2|}$
		\item $\langle \partial\sigma,\omega_2 \rangle=\mathrm{sign}(g_2)\cdot(-1)^{|J_1|}$
		\item $\langle \partial\omega_1,\tau \rangle=\mathrm{sign}(g_3)\cdot(-1)^{|J_1|}$
		\item $\langle \partial\omega_2,\tau \rangle=\mathrm{sign}(g_4)\cdot(-1)^{|J_1|+|J_2|-1}$
	\end{itemize} 
	where, $g_i$'s represent the permutation involved in the comparison of principal vertices.
	Note that there is a unique permutation which takes $\mathrm{PV}(\sigma)$ to $\mathrm{\mathrm{PV}}(\tau)$, so $g_3\circ g_1 = g_4\circ g_2$. This shows that $\langle \partial^2 \sigma,\tau \rangle=0$.
	
\textbf{Case 2:} Without loss of generality assume that $J_1\cup J_2= I_1$ and $J_3\cup J_4= I_2$, so only $I_1$ and $I_2$ are involved in the computation of $\langle \partial^2\sigma,\tau \rangle$. Further assume that $\sigma$ has the minimum number of blocks, i.e., $s=3$.	Thus, $\sigma=(I_1,I_2,I_3)$ and $\tau=(J_1,J_2,J_3,J_4,J_5,J_6)$ and we have the following 
	
	\begin{figure}[!ht]
		\centering
		\begin{tikzpicture}
		\sffamily
		\node (a0) at (0,0) {$(I_1,I_2,I_3)$};
		\node [below left=1cm of a0] (a1)  {$\omega_1=(J_1\cup J_2,J_3,J_4,J_5,J_6)$};
		\node [below right=1cm of a0] (a2) {$\omega_2=(J_1,J_2,J_3 \cup J_4,J_5,J_6)$};
		\draw [blue, thick] (a0) -- (a1);
		\draw [blue, thick] (a0) -- (a2);
		\node[below=2cm of a0] (b) {$(J_1,J_2,J_3,J_4,J_5,J_6)$};
		\draw [blue, thick] (a1) -- (b);
		\draw [blue, thick] (a2) -- (b);
		
		\end{tikzpicture}
		\caption{The interval $[\tau,\sigma]$ in case 2.}
	\end{figure}
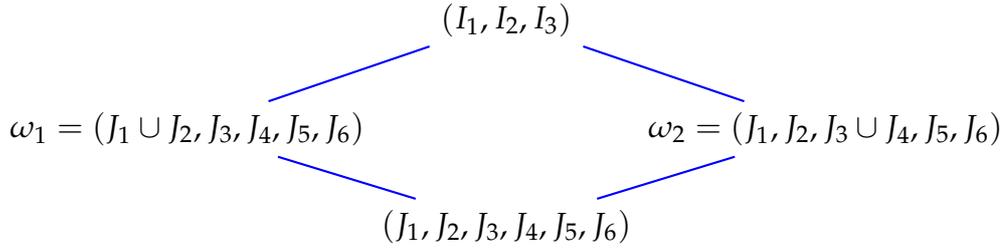
	
	\begin{itemize}
		\item $\langle \partial\sigma,\omega_1\rangle=\mathrm{sign}(g_1)\cdot(-1)^{|J_1|+|J_2|+|J_3|-1}$
		\item $\langle \partial\sigma,\omega_2\rangle =\mathrm{sign}(g_2)\cdot(-1)^{|J_1|}$
		\item $\langle \partial\omega_1,\tau \rangle=\mathrm{sign}(g_3)\cdot(-1)^{|J_1|}$
		\item $\langle \partial\omega_2,\tau \rangle=\mathrm{sign}(g_4)\cdot(-1)^{|J_1|+|J_2|+|J_3|-2}$
	\end{itemize} 
	where, $g_i$'s represent the permutation involved in the comparison of principal vertices.\\
	Note that there is a unique permutation which takes $\mathrm{PV}(\sigma)$ to $\mathrm{PV}(\tau)$, so $g_3\circ g_1 = g_4\circ g_2$. This shows that $\langle \partial^2 \sigma,\tau \rangle=0$.
\end{proof}

We can now prove the main result of this section. 

\begin{theorem}
	Let $\tau^{p-1}<\sigma^p$, then
	\begin{equation} \label{INCR}
	[\sigma:\tau]= \mathrm{sign}(g_{\sigma,\tau})\cdot (-1)^{i_{\tau}}.
	\end{equation}
	i.e., the coefficient of $\tau$ in the image of $\sigma$ under the boundary homomorphism $\partial$ is precisely the incidence number $[\sigma:\tau]$.
\end{theorem}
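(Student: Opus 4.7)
The plan is to exploit the fact that each closed cell of $\CP_{n+1}$ is combinatorially a product of permutohedra, combined with the fact that the combinatorial coefficient $\mathrm{sign}(g_{\sigma,\tau})\cdot(-1)^{i_\tau}$ has just been shown to define a differential (the previous theorem). Since $\CP_{n+1}$ is regular, the topological incidence numbers $[\sigma:\tau]$ lie in $\{\pm 1\}$ and any two $\mathbb{Z}$-valued differentials on the cellular chain complex that both take values in $\{\pm 1\}$ on codimension-one pairs and both square to zero differ only by a choice of orientations on the cells. The task therefore reduces to matching the canonical orientation (defined via principal vertices) with the product orientation inherited from the permutohedral factors.

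First, I would treat the single-permutohedron case, where $\sigma=(I_1)$ with $I_1=[n+1]\setminus\{n+1\}$ filled out by one final singleton block, so that $\overline{\sigma}\cong\perm_n$. Here the cellular boundary is classical: for a codimension-one face obtained by splitting the unique non-singleton block $I_p$ as $J_1\cup J_2$, the incidence number is $(-1)^{|J_1|-1}$ with respect to the canonical orientation rooted at $\mathrm{PV}(\sigma)=(1,2,\dots,n)$. This matches the lemma, because in this case $g_{\sigma,\tau}$ is the identity and the exponent $\sum_{i<p}r_i+|J_1|-(p-1)$ reduces to $|J_1|-1$.

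Next, I would pass to a general cell $\sigma=(I_1,\dots,I_k)$. Its closure is combinatorially $\perm_{|I_1|}\times\cdots\times\perm_{|I_k|}$, and the cellular boundary is a signed sum of the boundaries in each factor, with the sign in the $p$-th factor being $(-1)^{\sum_{i<p}(|I_i|-1)}$ by the usual Leibniz rule for product cells; combining this with the factor-wise incidence from the previous paragraph yields exactly $(-1)^{\sum_{i<p}r_i+|J_1|-(p-1)}$. Finally, when $\mathrm{PV}(\tau)\neq\mathrm{PV}(\sigma)$ (which happens precisely when rewriting the cyclically ordered label of $\tau$ so that the block containing $n+1$ comes last forces a cyclic shift of blocks), the principal-vertex conventions for $\sigma$ and $\tau$ differ by the permutation $g_{\sigma,\tau}\in S_{n+1}$, and the induced reorientation contributes exactly $\mathrm{sign}(g_{\sigma,\tau})$. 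Putting these two signs together gives the claimed formula.

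The main obstacle is the bookkeeping of the cyclic-representative convention: the need to always write $n+1$ in the last block means that a single elementary refinement in a cyclic partition can cause $\mathrm{PV}(\sigma)$ and $\mathrm{PV}(\tau)$ to be \emph{different} underlying permutations, so the product-of-permutohedra identification must be done with respect to a common principal vertex before the signs can be compared. Once this is handled, consistency of all signs across the complex follows from the diamond identities verified in the previous theorem: by the uniqueness of an incidence function satisfying $\partial^2=0$ and taking values in $\{\pm 1\}$ (given a fixed choice of orientations), checking the formula on any convenient anchor (for instance, a $1$-cell and its two endpoints) and then propagating via the diamond relations forces agreement on all of $\CP_{n+1}$.
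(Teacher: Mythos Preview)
Your closing argument---anchor the two sign systems at a low-dimensional cell and then propagate agreement through the diamond relations coming from $\partial^{2}=0$---is precisely what the paper does, only the paper packages it as an explicit induction on $\dim\sigma$: the base case is the direct check on $2$-cells (whose closures are hexagons or squares), and the inductive step builds a codimension-two diamond $\omega<\tau,\tilde\tau<\sigma$, reads off three of the four incidences from the induction hypothesis, and recovers the fourth from $\partial^{2}=0$. So at the level of the actual mechanism your proposal and the paper's proof coincide.

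Where you diverge is the product-of-permutohedra computation you offer as the primary route. That idea is sound in principle, but two details are off. First, with the paper's canonical orientation (neighbours of $\mathrm{PV}(\sigma)$ indexed by consecutive transpositions) the missing index for the facet $J_1\mid J_2$ of a single permutohedral factor is $|J_1|$, not $|J_1|-1$; your Leibniz calculation therefore lands one off from the lemma's exponent $\sum_{i<p}r_i+|J_1|-(p-1)$. Second, your parenthetical that $\mathrm{PV}(\sigma)\neq\mathrm{PV}(\tau)$ happens ``precisely'' when the $n{+}1$-convention forces a cyclic shift is incorrect: even with no shift at all, splitting $I_p$ as $J_1,J_2$ in any way other than the increasing cut already changes the principal vertex (e.g.\ $I_p=\{1,2,3\}$, $J_1=\{1,3\}$, $J_2=\{2\}$), and it is exactly this internal shuffle that produces $g_{\sigma,\tau}$ in the generic case. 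These are fixable, but since your final paragraph already supplies the diamond-propagation argument the paper uses, the permutohedral-product layer is optional scaffolding rather than the load-bearing step.
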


\begin{proof}
	We will prove this inequality using induction on the dimension of cells. If dimension of $\sigma$ is 2, then the boundary complex is exactly one of the following.
	
\begin{figure}[H]
\centering
\begin{tikzpicture}[line cap=round,line join=round,>=triangle 45,x=1.0cm,y=1.0cm]
\clip(0.,1.) rectangle (7.,6.);
\draw [line width=1.2pt] (0.72,4.08)-- (2.,5.);
\draw [line width=1.2pt] (2.,5.)-- (3.26,4.08);
\draw [line width=1.2pt] (3.26,4.08)-- (3.28,3.06);
\draw [line width=1.2pt] (3.28,3.06)-- (2.,2.18);
\draw [line width=1.2pt] (2.,2.18)-- (0.72,3.02);
\draw [line width=1.2pt] (0.72,3.02)-- (0.72,4.08);
\draw [line width=1.2pt] (5.46,4.4)-- (4.56,3.46);
\draw [line width=1.2pt] (4.56,3.46)-- (5.46,2.54);
\draw [line width=1.2pt] (5.46,2.54)-- (6.38,3.44);
\draw [line width=1.2pt] (6.38,3.44)-- (5.46,4.4);
\begin{scriptsize}
\draw [fill=blue] (0.72,4.08) circle (2.0pt);
\draw [fill=blue] (2.,5.) circle (2.0pt);
\draw [fill=blue] (3.26,4.08) circle (2.0pt);
\draw [fill=blue] (3.28,3.06) circle (2.0pt);
\draw [fill=blue] (2.,2.18) circle (2.0pt);
\draw [fill=blue] (0.72,3.02) circle (2.0pt);
\draw [fill=blue] (5.46,4.4) circle (2.0pt);
\draw [fill=blue] (4.56,3.46) circle (2.0pt);
\draw [fill=blue] (5.46,2.54) circle (2.0pt);
\draw [fill=blue] (6.38,3.44) circle (2.0pt);
\end{scriptsize}
\end{tikzpicture}
\caption{Possible boundaries of a 2-cell}
\end{figure}
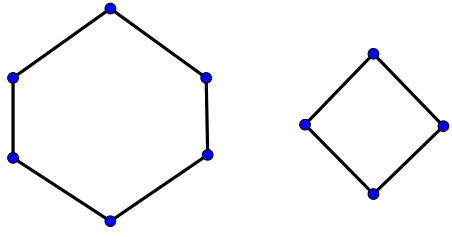
	
By computing the incidence numbers explicitly, \cref{INCR} can be proven easily.

Now assume the induction hypothesis that the \cref{INCR} is true for all cells of dimension less than or equal to $k$. Let $\sigma=(I_1,I_2 \dots I_{n-k}), \tau=(I_1,I_2 \dots J_1,J_2 \dots I_{n-k})$ with $J_1\cup J_2=I_p$ and $|I_i|=r_i$. Without loss of generality we can assume that the $\mathrm{PV}(\sigma)=(1,2,3, \dots, n+1)$.

\textbf{Step 1:} If $\tau$ has the same principal vertex as $\sigma$, in that case $J_1=(r_{p-1}+1,r_{p-1}+2,\dots, r_{p-1}+t)$ and $J_2=(r_{p-1}+t+1,r_{p-1}+t+2, \dots, r_p)$.\\ 
Let $\tilde{\tau}=(I_1,I_2, \dots, \tilde{J}_1,\tilde{J}_2, \dots, I_{n-k})$ where $\tilde{J}_1=(r_{p-1}+1,r_{p-1}+2, \dots, r_{p-1}+t-1)$ and $\tilde{J}_2=(r_{p-1}+t,r_{p-1}+t+1, \dots, r_p)$.

\begin{figure}[!ht]
	\centering
	\begin{tikzpicture}
	\sffamily
	\node (a0) at (0,0) {$\sigma=(I_1,I_2 \dots I_{n-k})$};
	\node [below left=1cm of a0] (a1)  {$\tau=(I_1,I_2 \dots J_1,J_2 \dots I_{n-k})$};
	\node [below right=1cm of a0] (a2) {$\tilde{\tau}=(I_1,I_2 \dots \tilde{J}_1,\tilde{J}_2 \dots I_{n-k})$};
	\draw [blue, thick] (a0) -- (a1);
	\draw [blue, thick] (a0) -- (a2);
	\node[below=2cm of a0] (b) {$\omega=(I_1,I_2 \dots \tilde{J}_1,\{r_{p-1}+t\},J_2 \dots I_{n-k})$};
	\draw [blue, thick] (a1) -- (b);
	\draw [blue, thick] (a2) -- (b);
	
	\end{tikzpicture}
	\caption{}
\end{figure}

By induction hypothesis we know $[\tau:\omega] \cdot [\tilde{\tau}:\omega]= 1 $. Since the square of the boundary map vanishes in the cellular chain complex, we have $[\sigma:\tau] \cdot [\sigma:\tilde{\tau}]=-1$. Let $\gamma=(1,I_1-\{1\},I_2,\dots,I_{n-k})$. Then fixing the value of $[\sigma:\gamma]$ fixes the value for every $\tau$ whose principal vertex is same as $\sigma$. we fix this value to be -1.

\textbf{Step 2:} If $\mathrm{PV}(\sigma)\neq \mathrm{PV}(\tau)$ then it is enough to consider the cells where permutation required to take one to the other is just a transposition.\\
Then $\tau=(I_1,I_2, \dots, J_1,J_2, \dots, I_{n-k})$ where  $J_1=(r_{p-1}+1,r_{p-1}+2, \dots, r_{p-1}+t+1)$, $J_2=(r_{p-1}+t,r_{p-1}+t+2, \dots, r_p)$.
Let $\tilde{\tau}=(I_1,I_2, \dots, \tilde{J}_1,\tilde{J}_2, \dots, I_{n-k})$ where $\tilde{J}_1=(r_{p-1}+1,r_{p-1}+2, \dots, r_{p-1}+t-1)$ and $ \tilde{J}_2=(r_{p-1}+t,r_{p-1}+t+1,\dots, r_p)$.

\begin{figure}[!ht]
	\centering
	\begin{tikzpicture}
	\sffamily
	\node (a0) at (0,0) {$\sigma=(I_1,I_2,..I_{n-k})$};
	\node [below left=1cm of a0] (a1)  {$\tau=(I_1,I_2\dots J_1,J_2..I_{n-k})$};
	\node [below right=1cm of a0] (a2) {$\tilde{\tau}=(I_1,I_2 \dots \tilde{J}_1,\tilde{J}_2 \dots I_{n-k})$};
	\draw [blue, thick] (a0) -- (a1);
	\draw [blue, thick] (a0) -- (a2);
	\node[below=2cm of a0] (b) {$\omega=(I_1,I_2 \dots \tilde{J}_1,\{r_{p-1}+t+1\},J_2 \dots I_{n-k})$};
	\draw [blue, thick] (a1) -- (b);
	\draw [blue, thick] (a2) -- (b);
	
	\end{tikzpicture}
	\caption{}
\end{figure}

By induction hypothesis and step 1, we know $[\sigma:\tilde{\tau}], [\tau:\omega], [\tilde{\tau}: \omega ]$ and we can compute the $[\sigma:\tau]$ by using the fact the square of the boundary map vanishes in cellular chain complex.

This should be equal to the value defined above in definition because we have already showed $\partial^2=0$. 
\end{proof}

Here are some examples that illustrate the proof above:
\begin{example}
	Let $\alpha =(1,23,45,6)$ , $\beta= (1,2,3,45,6)$. The neighbors of $\mathrm{PV}(\alpha)$ are ordered as follows
	$v_1=(1,3,2,4,5,6)$,\\
	$v_2=(1,2,3,5,4,6)$,\\
	Since $\mathrm{PV}(\alpha)=\mathrm{PV}(\beta)$,  $g_{\alpha,\beta}=Id$. The vertex $\mathrm{PV}(\beta)$ has only one neighbor which is $(1,2,3,5,4,6)$ and hence the missing vertex is $v_1$. This shows that $\langle \partial \alpha,\beta \rangle=-1$
\end{example}

\begin{example}
	Let $\alpha =(1,23,45,6)$, $\beta_1= (1,2,3,45,6)$, $\beta_2= (1,23,4,5,6)$ and\\ $\gamma=(1,2,3,4,5,6)$. Observe	$\gamma <\beta_1, \gamma < \beta_2$ and $\beta_1, \beta_2< \alpha$.
	Clearly $\langle \partial \alpha, \beta_1 \rangle=-1 , \langle \partial \alpha, \beta_2\rangle=1$ and $\langle \partial \beta_1, \gamma\rangle=-1, \langle \partial \beta_2, \gamma \rangle=-1$. Thus we have $\langle \partial \alpha, \beta_1 \rangle \cdot \langle \partial \beta_1, \gamma \rangle=1$ and $\langle \partial \alpha, \beta_2 \rangle \cdot \langle \partial \beta_2, \gamma \rangle =-1$ showing that $\langle \partial^2 \alpha, \gamma \rangle =0$.
	
\end{example}

\section{A discrete Morse function for the cyclopermutahedron} \label{sec4}

In this section we recall the homology computations of $\CP_{n+1}$ done in \cite{nekrasov2016cyclopermutohedron}; most results in this section are not new.  
However, we give a slightly different proof of vanishing of boundary homomorphisms in the Morse complex of $\CP_{n+1}$.
The main ingredient is a technical criterion for gradient paths; it describes conditions under which two paths induce opposite orientations (see \Cref{good path lemma}, \emph{the good path lemma}).

We begin by recalling the discrete Morse function on $\CP_{n+1}$ that was introduced in \cite{nekrasov2016cyclopermutohedron}. 

\noindent\textbf{Step 1}. We pair together two cells
$$\alpha =(\dots {1}, I \dots)\ \mathrm{and}\ \beta =( \dots {1} \cup I \dots)$$
if $n + 1 \not\in I$.

We proceed for all $2 \leq k < n$, assuming that the $k$-th step is:

\textbf{Step} $\mathbf{k}$. We pair together two cells
$$\alpha =(\dots {k}, I \dots)\ \mathrm{and}\ \beta =(\dots {k}\cup I \dots)$$
if the following holds:
\begin{enumerate}
    \item $\alpha$ and $\beta$ were not paired at any of the previous steps.
    \item $n + 1 \not \in I$.
    \item $k < I$.
\end{enumerate}

\begin{example}
The cell $(2,43,1,56)$ is paired with the cell
$(243,1,56)$ on the second step. The cell $(4,5,3,1,26)$ is paired with the cell $(45,3,1,26)$ on the fourth step. The cell $(4,3,2,1,56)$ is not paired.
\end{example}

\begin{lemma}[{\cite[Lemma 4]{nekrasov2016cyclopermutohedron}}]
The above pairing is a discrete Morse function.
\end{lemma}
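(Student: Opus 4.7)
The plan is to prove acyclicity of $V$, since well-definedness as a vector field is immediate from the construction: at step $k$ only cells not paired at a previous step are considered, so each cell lies in at most one pair. Suppose toward contradiction that there exists a closed $V$-path $\sigma_1 \to \tau_1 > \sigma_2 \to \tau_2 > \cdots > \sigma_t \to \tau_t > \sigma_{t+1} = \sigma_1$. For each $i$, let $k_i$ be the step at which $(\sigma_i, \tau_i)$ is matched, so $\sigma_i = (\dots, \{k_i\}, I_i, \dots)$ has $\{k_i\}$ as a singleton block at some position $p_i$, with $k_i < I_i$ and $n+1 \notin I_i$, and $\tau_i$ is obtained by merging $\{k_i\}$ with $I_i$.

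The central claim is that the pair $(k_i, p_i)$ strictly increases in lexicographic order along the path, contradicting $\sigma_{t+1} = \sigma_1$. I would first handle the equality case $k_{i+1} = k_i$: since $\sigma_{i+1}$ is a face of $\tau_i$ distinct from $\sigma_i$, having $\{k_i\}$ reappear as a singleton in $\sigma_{i+1}$ forces the merged block $\{k_i\} \cup I_i$ to be split as $(I_i, \{k_i\})$; every other split either buries $k_i$ inside a larger block or reproduces $\sigma_i$. This shifts $\{k_i\}$ exactly one position to the right, so $p_{i+1} = p_i + 1 > p_i$. For the strict inequality $k_{i+1} \ge k_i$, I would argue by contradiction: if $k_{i+1} < k_i$, then the singleton $\{k_{i+1}\}$ of $\sigma_{i+1}$ either was already a singleton of $\tau_i$ (hence of $\sigma_i$, at the corresponding position and with essentially the same right neighbor) or arose from refining some block $J_t$ of $\tau_i$ other than $\{k_i\} \cup I_i$. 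In the first situation the matching condition transfers verbatim to $\sigma_i$, forcing $\sigma_i$ to be paired at step $k_{i+1}$, contradicting the minimality of $k_i$. In the second, $J_t$ appears unrefined in $\sigma_i$; since $J_t \supseteq \{k_{i+1}\}$ has size at least two with $\min J_t \le k_{i+1}$ and $n+1 \notin J_t$, the cell $\sigma_i$ has a valid $\beta$-side pairing at some step at most $k_{i+1}$, again contradicting that $k_i$ is minimal.

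The technical heart I anticipate lies in that last $\beta$-side argument: the definition of ``paired at step $k$'' requires the candidate partner to be unmatched at all earlier steps, so one must verify that $\sigma_i$'s would-be partner at step $\min J_t$ (namely $\sigma_i$ with $J_t$ split off $\{\min J_t\}$) is itself not matched earlier. I would handle this by a secondary induction on $k$: were that partner matched at some smaller step, iterating the same analysis on it produces a matching for $\sigma_i$ at a yet smaller step, still strictly less than $k_i$, yielding the desired contradiction. Combined with a short case analysis to ensure the right-neighbor conditions survive the passage between $\sigma_i$ and $\sigma_{i+1}$ when the refined block of $\tau_i$ sits adjacent to the critical singleton (so that the comparison of $J_t$ with its split piece $J_t'$ does not corrupt the $k_{i+1} < I^*$ inequality), this completes the proof; the remaining steps are bookkeeping.
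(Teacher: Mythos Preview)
The paper does not supply its own proof of this lemma: it is stated with a citation to \cite{nekrasov2016cyclopermutohedron} and immediately followed by the next cited lemma, so there is nothing in the present paper to compare against.

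Your argument is the standard acyclicity proof for an element-by-element matching, and the overall architecture (track the step index $k_i$, show it is nondecreasing along any $V$-path, and break ties with the position $p_i$) is correct and is essentially how the result is proved in the cited source. The case $k_{i+1}=k_i\Rightarrow p_{i+1}=p_i+1$ is handled cleanly. The part you flag as the ``technical heart'' is genuinely the delicate point, but your proposed resolution is a little loose: when you say ``iterating the same analysis on it produces a matching for $\sigma_i$ at a yet smaller step,'' note that if the would-be partner $\tau'$ was matched earlier as a $\beta$, the recursion naturally lands you on a \emph{refinement} $\sigma_i^-$ of $\sigma_i$ rather than on $\sigma_i$ itself, so the induction should be phrased as ``$\sigma_i$ (or some refinement obtained by peeling off minima) is matched at a step $<k_i$,'' which still contradicts the hypothesis that $\sigma_i$ survived to step $k_i$. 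With that adjustment the descent terminates and the argument closes; the remaining adjacency bookkeeping you mention is routine.
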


\begin{lemma}[{\cite[Lemma 5]{nekrasov2016cyclopermutohedron}}]
The critical cells of the above defined Morse function are
exactly all the cells of the following two types:\\
\textbf{Type 1.} Cells labeled by $( \nabla ,\{n + 1,\dots\} )$, where $\nabla$ is a string of singletons coming in decreasing order.\\
\textbf{Type 2.} Cells labeled by $( i, I, \{n + 1,\dots\} )$, where $i < I$.
\end{lemma}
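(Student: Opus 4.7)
The plan is to analyze both directions by studying precisely when a cell can be paired. Call $k \in [n-1]$ a \emph{pivot} of $\gamma = (I_1, \dots, I_l)$ if step $k$ attempts to pair $\gamma$: either on the $\alpha$-side (some $I_j = \{k\}$ with $j < l$, the next block $I_{j+1}$ satisfies $n+1 \notin I_{j+1}$ and $k < \min I_{j+1}$, and $l \geq 4$ so the merged partner is still a cell of $\CP_{n+1}$), or on the $\beta$-side ($k = \min I_j$ for some non-singleton block $I_j$ with $j < l$). A cell is paired at step $k$ precisely when $k$ is a pivot and its intended partner has not already been matched at an earlier step; thus $\gamma$ is critical iff every pivot of $\gamma$ is blocked by a previously matched partner.

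For the ``critical'' direction, Type 1 cells $(s_1, \dots, s_{l-1}, N)$ with $s_1 > \dots > s_{l-1}$ have no pivots: each singleton's successor is either a smaller singleton or $N \ni n+1$, both of which defeat the $\alpha$-side, and $N$ itself defeats the $\beta$-side. For Type 2 cells $(i, I, N)$ with $l = 3$: if $|I| = 1$ there is again no pivot (the $\alpha$-side for $i$ requires $l \geq 4$), and if $|I| \geq 2$ the unique pivot is $\min I$ on the $\beta$-side, but its partner $(i, \min I, I - \min I, N)$ has four blocks and is matched earlier at step $i$ via the now-enabled $\alpha$-side, blocking $\gamma$.

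For the converse, suppose $\gamma$ is neither Type 1 nor Type 2, and first dispose of the pivot-free case. If $\gamma$ has no pivot then the absence of $\beta$-side pivots forces all of $I_1, \dots, I_{l-1}$ to be singletons, and if $l \geq 4$ the absence of $\alpha$-side pivots between successive singletons forces $s_1 > \dots > s_{l-1}$, i.e., Type 1; while if $l = 3$ then $\gamma = (s_1, s_2, N)$ is automatically either Type 1 or Type 2 — contradicting our assumption. So $\gamma$ must have a smallest pivot $m^*$. The central comparison is between the pivots of $\gamma$ at steps $< m^*$ and those of its intended step-$m^*$ partner: for each $k < m^*$ the block containing $k$ is identical in both cells, and the relevant successor block has the same minimum wherever they differ, so the $\beta$-side conditions agree and the $\alpha$-side conditions agree whenever both cells have at least four blocks. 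If $l \geq 4$, this forces the partner to have no pivots strictly below $m^*$, so it is unpaired when step $m^*$ is reached and matches $\gamma$ there; $\gamma$ is not critical.

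If $l = 3$, the smallest pivot is necessarily $\beta$-side (there are no $\alpha$-side pivots at all), and the partner now has four blocks — enabling an $\alpha$-side pivot in the partner not shared by $\gamma$. A direct check shows this new earlier pivot appears only when the block immediately before the $\beta$-pivoted block is a singleton $\{s\}$ with $s < m^*$, i.e., $\gamma = (s, I_2, N)$ with $s < \min I_2 = m^*$ and $|I_2| \geq 2$ — precisely Type 2. In every other $l = 3$ configuration the partner remains unpaired at step $m^*$ and $\gamma$ gets matched. The main obstacle is exactly this asymmetry: the threshold $l \geq 4$ for the $\alpha$-side cuts differently for $\gamma$ and its $\beta$-side partner (which has one more block), and pinpointing the unique configuration where this matters is what singles out Type 2.
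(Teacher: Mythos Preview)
The paper does not prove this lemma at all; it is quoted verbatim from \cite{nekrasov2016cyclopermutohedron} and simply cited. So there is nothing in the present paper to compare against, and your argument must be judged on its own.

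Your proof is correct. The pivot formalism is the right organizing device: a cell is matched exactly when some pivot $k$ finds its intended partner still available, and you reduce the question to the \emph{smallest} pivot $m^*$. The key technical claim --- that for $k<m^*$ the block containing $k$ and the minimum of its successor block coincide in $\gamma$ and in the step-$m^*$ partner --- is correct in both the $\alpha$-side and $\beta$-side cases, and it immediately gives that the partner has no $\beta$-side pivots below $m^*$, and no $\alpha$-side pivots below $m^*$ provided the partner has at least four blocks. When $l\ge 4$ this covers everything (if $m^*$ is $\alpha$-side and $l=4$ the partner has three blocks, but then it has no $\alpha$-side pivots at all, which is even stronger). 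When $l=3$ the $\beta$-side partner acquires a fourth block, and you correctly isolate the unique way a new sub-$m^*$ $\alpha$-pivot can appear: the block preceding the split block must be a singleton $\{s\}$ with $s<m^*$, forcing $\gamma=(s,I_2,N)$ with $s<\min I_2$ --- exactly Type~2. In that situation $\min I_2$ is the \emph{only} pivot of $\gamma$, so once it is blocked, $\gamma$ is genuinely critical; this closes the argument.

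Two small points worth tightening in exposition, though neither is a gap. First, in the Type~2 critical direction you should state explicitly that the partner $(i,\min I,I\setminus\min I,N)$ has $i$ as its \emph{smallest} pivot and that its own step-$i$ partner $(\{i,\min I\},I\setminus\min I,N)$ is likewise unpaired before step $i$; you assert the conclusion but the two-step check deserves a sentence. Second, the phrase ``the $\alpha$-side conditions agree whenever both cells have at least four blocks'' slightly undersells the $l=4$ $\alpha$-side case, where the partner drops to three blocks; it would read more cleanly to note separately that a three-block partner has no $\alpha$-side pivots whatsoever.
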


\begin{lemma}[{\cite[Lemma 9]{nekrasov2016cyclopermutohedron}}] \label{paths between critical cells}
The following three cases describe all the gradient paths between
critical cells:
\begin{enumerate}
    \item Let $\beta = (\nabla^{'}, \{n+1,\dots\})$ and $\alpha = (\nabla, \{n+1, \dots \})$ be two cells of type 1. Then there are two gradient paths from $\beta$ to $\alpha$ iff $\nabla^{'}=\nabla \cup$ k for some k.
    \item Let $\beta = (\{i\}, \{j, k\}, \{n + 1,\dots\})$ and $\alpha = (\{k\}, \{j\}, \{i\}, \{n + 1,\dots\})$ be cells of type 2 and 1 respectively. Then there are two gradient paths from $\beta$ to $\alpha$.
    \item Let $\beta = (\{i\}, \{j\}, \{n + 1,\dots\})$ and $\alpha = (\nabla, \{n + 1, \dots\})$ be cell of type 2 and type 1 respectively. Then there are two gradient paths from $\beta$ to $\alpha$ iff $\nabla$ consists of three singletons, two of which are $\{i\}$ and $\{j\}$.
\end{enumerate}
\end{lemma}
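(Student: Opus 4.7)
The plan is to trace gradient paths between critical cells explicitly, case by case, exploiting the local structure of the Morse pairing. I first record two structural observations. \emph{Observation (i):} the codim-$1$ faces of a cell are obtained by splitting one of its non-singleton blocks, and when the block being split is the $(n+1)$-block $N = X \sqcup Y$, one obtains \emph{two} distinct codim-$1$ faces: the ``standard'' one with $Y \ni n+1$ last in the representative, and the ``cyclic'' one with $X \ni n+1$ and the representative obtained by cyclic rotation so that $X$ is again last. \emph{Observation (ii):} the pairing is strictly local---at step $k$, a cell with pattern $(\ldots, \{k\}, I, \ldots)$ is paired with $(\ldots, \{k\} \cup I, \ldots)$ subject to the stated conditions---so in a $V$-path $\sigma_1, \tau_1, \sigma_2, \tau_2, \ldots, \sigma_{t+1}$, each $\tau_i$ is forced by $\sigma_i$ while $\sigma_{i+1}$ is chosen freely among the codim-$1$ faces of $\tau_i$ other than $\sigma_i$. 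The path contributes to $\Gamma(\tilde\sigma, \alpha)$ only if $\sigma_{t+1} = \alpha$; otherwise it dead-ends at a cell that is the higher-dimensional member of some pair.

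For case (1), I would identify the two codim-$1$ faces of $\beta$ that isolate the distinguishing singleton $\{k\}$ from the $(n+1)$-block: the \emph{standard split} $\tilde\sigma = (\nabla_\beta, \{k\}, N_\beta \setminus \{k\})$ and the \emph{cyclic split} $\tilde\sigma' = (\{k\}, \nabla_\beta, N_\beta \setminus \{k\})$, both arising by observation (i). From $\tilde\sigma$, the pairings force a cascade that shuttles $\{k\}$ leftward through $\nabla_\beta$ until $\{k\}$ reaches its proper decreasing position, terminating at $\alpha$; symmetrically, the cascade from $\tilde\sigma'$ shuttles $\{k\}$ rightward to the same destination. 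In the extremal regimes ($k < \min \nabla_\beta$ or $k > \max \nabla_\beta$) one of these faces coincides with $\alpha$, giving a trivial length-zero path. I would verify separately that every other codim-$1$ face of $\beta$ either isolates a singleton $\{k'\}$ with $k' \neq k$ (so the cascade terminates at a different critical cell) or contains a multi-element extracted block (rendering the face the higher-dimensional member of a pair, a dead end).

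Case (2) is handled by the same method with simpler arithmetic. Splitting $\{j,k\}$ gives the two codim-$1$ faces $(i,j,k,N)$ and $(i,k,j,N)$ of $\beta$; the forced cascades---at steps $i, j$ from the first and $i, k, j$ from the second---are short enough to write out in full, and each terminates at $\alpha = (k,j,i,N)$. Splits of $N$ are ruled out as in case (1). Case (3) combines features of the two preceding cases: the two paths come from the standard and cyclic splits of $N$ that isolate the ``third'' singleton $\{\ell\}$ of $\nabla_\alpha$ (the element distinct from $i$ and $j$), and each drives $\{\ell\}$ into its correct decreasing position relative to $\{i\}$ and $\{j\}$ by the same leftward/rightward cascade mechanism.

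The main obstacle is case (1), where the careful bookkeeping required by observation (i)---recognizing the cyclic split as a genuine codim-$1$ face of $\beta$, which is easy to overlook---is what makes the path count come out to two rather than one. Once that is in hand, the invariant ``the initial split designates a unique moving singleton and the subsequent flow can only terminate at the Type 1 cell where that singleton has been inserted into $\nabla_\beta$ in decreasing order'' closes the argument, with cases (2) and (3) following by analogous but shorter combinatorial checks.
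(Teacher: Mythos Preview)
The paper does not supply its own proof of this lemma; it is quoted verbatim from \cite{nekrasov2016cyclopermutohedron} and then used as input for the good-path lemma. Your explicit path-tracing strategy is exactly the method the present paper employs for the analogous result on $\QP_{n+1}$ (Theorem~\ref{paths}), where the two gradient paths in each case are simply written out line by line, so in spirit and substance your approach matches what the authors do when they \emph{do} prove a statement of this kind.

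One point worth tightening: you describe the cascade after the initial split as ``forced,'' but strictly speaking only the \emph{upward} step $\sigma_i \to \tau_i$ is forced by the matching; the downward step $\tau_i \to \sigma_{i+1}$ can go to any codimension-one face of $\tau_i$ other than $\sigma_i$. Your argument therefore needs the (true, but unstated) claim that every branch other than the one that continues to shuttle the distinguished singleton $\{k\}$ dead-ends---either at a cell paired downward or at the wrong critical cell. The paper's own treatment of Theorem~\ref{paths} handles this the lazy way, by exhibiting the two paths explicitly and leaving the exhaustiveness implicit; your invariant (``the initial split fixes a unique moving singleton and the flow can only terminate once it is inserted in decreasing order'') is a cleaner way to close that gap, and would be worth stating and checking carefully if you want a self-contained proof.
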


Now we present the part which not only gives the different proof of the fact that boundary homomorphisms in the Morse complex vanish but will also help us in the next Section where the boundary homomorphisms are not zero. 

\begin{lemma}[The good path lemma]\label{good path lemma}
	Let $(t_1, t_2, s)$ be a triple such that $t_1 = (X,\{k\},I,Y)$ and $t_2 = (X,I,\{k\},Y)$ are two (p-1)-cells in the boundary of the p-cell $s = (X,\{k\}\cup I, Y)$ in $\CP_{n+1}$, where $X$ and $Y$ are sequences
	of sets. Then 
	\begin{align}\label{good}
	[t_1 : s][t_2 : s] = -1.
	\end{align}
\end{lemma}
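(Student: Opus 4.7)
My plan is to apply the closed-form formula for $\langle\partial\sigma,\tau\rangle$ established in the first lemma of Subsection~\ref{canonical orientation} to each of the two faces $t_1,t_2$ of $s$, multiply, and show the product equals $-1$. Since that lemma expresses the incidence number as a product of $\mathrm{sign}(g_{\sigma,\tau})$ and an explicit $(-1)$-power depending on block sizes and the position of the split, the identity reduces to two parity computations.

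Write $r := |I|$ and suppose the block $\{k\}\cup I$ occupies position $m$ in the sequence of blocks of $s$, so that $\sum_{i=1}^{m-1} r_i = |X|$, where $|X|$ denotes the total number of elements appearing in the blocks listed by $X$. For $\tau = t_1$ I take $J_1 = \{k\}$ and $J_2 = I$, giving $|J_1| = 1$; for $\tau = t_2$ I take $J_1 = I$ and $J_2 = \{k\}$, giving $|J_1| = r$. The two exponents supplied by the lemma then sum to $2|X| + r + 1 - 2(m-1)$, which has parity $r + 1$.

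For the sign factors, I plan to identify $g_{s,t_1}\cdot g_{s,t_2}^{-1}$ with $g_{t_2,t_1}$, the permutation carrying $\mathrm{PV}(t_2)$ to $\mathrm{PV}(t_1)$. Writing $I = \{i_1 < \cdots < i_r\}$, passing from $\mathrm{PV}(t_2)$ to $\mathrm{PV}(t_1)$ moves the value $k$ from the position just after $i_r$ to the position just before $i_1$ and fixes everything else, which is the single $(r+1)$-cycle $(k,i_r,i_{r-1},\dots,i_1)$ of sign $(-1)^r$. Hence $\mathrm{sign}(g_{s,t_1})\cdot\mathrm{sign}(g_{s,t_2}) = (-1)^r$, and combining with the parity above gives
\[
[t_1:s]\cdot[t_2:s] = (-1)^{r+1}\cdot(-1)^r = -1.
\]

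The main obstacle I anticipate is simply bookkeeping: one must keep careful track of which of $\{k\}$ and $I$ plays the role of $J_1$ in each application of the lemma, and verify that the cycle description of $g_{t_2,t_1}$ remains correct in the degenerate cases $k < I$ and $I < k$ (where one of $g_{s,t_1}$ or $g_{s,t_2}$ individually becomes the identity, but the product relation still produces the overall sign $(-1)^r$).
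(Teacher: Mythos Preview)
Your proof is correct and follows essentially the same route as the paper's: apply the incidence-number formula from Subsection~\ref{canonical orientation} to each of $t_1$ and $t_2$ and multiply. The only difference is that the paper computes $g_{s,t_1}$ and $g_{s,t_2}$ individually (by locating $k$ among the sorted elements of $I$), whereas you compute only their product $g_{s,t_1}g_{s,t_2}^{-1}$ as the single $(|I|+1)$-cycle carrying $\mathrm{PV}(t_2)$ to $\mathrm{PV}(t_1)$; this is a small streamlining but the argument is otherwise identical.
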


\begin{proof}
Let $I = \{i_1,\dots , i_s\}$ with $i_1 < \dots< i_r < k < i_{r+1} < \dots< i_s$. Then,
\begin{align*}
v_0 &= PR(s) = (\Delta_X, i_1,\dots,i_r, k, i_{r+1},\dots, i_s,\Delta_Y). \\
v_{\tau_1} &= PR(\tau_1) = (\Delta_X, k, i_1,\dots, i_s,\Delta_Y).\\
v_{\tau_2} &= PR(\tau_2) = (\Delta_X, i_1,\dots,i_s, k,\Delta_Y).
\end{align*}

In the above expressions, given any sequence $Z$ of sets, $\Delta_Z$ denotes the partition of $Z$ into singletons (such that a sequence of singletons arising from a single set is contiguous and in ascending order). 
Thus $g_{\tau_1} = (k, i_1,\dots,i_r)$ and $g_{\tau_2} = (k, i_s,\dots,i_{r+1})$. Now, if $X = A_1A_2\dots A_a$, then denote by $\Vert X\Vert$ the quantity $\sum_{i=1}^a (\Vert{A_i}\Vert -1)$. Then $i_{\tau_1} = \Vert{X}\Vert + 1$ and $i_{\tau_2} = \Vert{X}\Vert + s$. Now we obtain
\begin{align*}
[\tau_1 : s]& = (\mathrm{sign} g_{\tau_1}) \cdot (-1)^{p+i_{\tau_1}}\\
&= (-1)^r \cdot (-1)^{p+\Vert{X}\Vert+1}\\
&= (-1)^{p+\Vert{X}\Vert+r+1}, and\\
[\tau_2 : s]& = (\mathrm{sign} g_{\tau_2}) \cdot (-1)^{p+i_{\tau_2}}\\
&= (-1)^{s-r} \cdot (-1)^{p+\Vert{X}\Vert+s}\\
&= (-1)^{p+\Vert{X}\Vert-r}
\end{align*}
and the result follows.
\end{proof}

\begin{definition}
Suppose we have a path $b_0, a_1, b_1, \dots , a_t, b_t, a_{t+1}$, where each triple $(a_i, a_{i+1}, b_i)$ is as above for $1 \leq i \leq t$. We call such a path a \emph{good path}.
\end{definition}

The following lemma follows directly from the proof of \cref{paths between critical cells}.

\begin{lemma}\label{paths are good}
    The gradient paths between critical cells in \cref{paths between critical cells} are good paths.
\end{lemma}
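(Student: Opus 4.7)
My plan is to verify the claim by directly inspecting the explicit gradient paths produced in the proof of \cref{paths between critical cells} and recognizing in each case the good-triple pattern
$$\sigma_i=(X,\{k\},I,Y),\quad \sigma_{i+1}=(X,I,\{k\},Y),\quad \tau_i=(X,\{k\}\cup I,Y).$$

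The first observation is that the up-step $\sigma_i\to\tau_i$ in any gradient path already has the required form, because the Morse matching on $\CP_{n+1}$ pairs precisely those cells that differ by merging a singleton $\{k\}$ with an immediately following block $I$ satisfying $k<I$ and $n+1\notin I$. Hence the only content of the lemma is that the subsequent down-step $\tau_i\to\sigma_{i+1}$ lands on the \emph{reverse-ordered} split $(X,I,\{k\},Y)$ rather than on any other ordered splitting of the merged block $\{k\}\cup I$.

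I would then carry out a case-by-case verification mirroring the three cases of \cref{paths between critical cells}. In Case 1 (both endpoints of type 1) the two paths arise from transporting the extra singleton $\{k\}$ through the decreasing string $\nabla$; each intermediate transposition of $\{k\}$ with an adjacent singleton $\{m\}$ passes through the cell containing the block $\{k,m\}$, which is exactly the good-triple pattern. In Case 2 the two paths separate the pair $\{j,k\}$ in each of the two possible orders and then swap the resulting singleton past $\{i\}$; in Case 3 the paths extend the two-singleton string by adjoining a third singleton in the two possible positions. In every intermediate step the merged block reappears split in the reverse order, producing the required form.

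The main task—and the only real obstacle—is the combinatorial bookkeeping needed to rule out the alternative splits of $\{k\}\cup I$: for each such alternative the resulting face is either already critical (so the flow terminates at the wrong cell) or is itself paired upward via a different application of the Morse rule, so cannot extend the path toward the target critical cell. This exclusion is precisely what the proof of \cref{paths between critical cells} establishes, so the present lemma follows by reading off the paths constructed there and matching their triple structure against the good-path definition.
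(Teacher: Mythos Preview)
Your proposal is correct and takes essentially the same approach as the paper: the paper's own proof consists of the single sentence that the lemma ``follows directly from the proof of \cref{paths between critical cells}'', and your argument is exactly a spelling-out of that remark---you observe that the up-step of the matching already has the form $(X,\{k\},I,Y)\mapsto(X,\{k\}\cup I,Y)$ and then read off from the explicit paths (constructed in the proof of the cited lemma) that each down-step is the reverse split $(X,I,\{k\},Y)$. There is nothing to add; your final paragraph correctly identifies that the exclusion of alternative splits is already handled in the proof of \cref{paths between critical cells}.
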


The above results lead to a rather simple proof for the vanishing of boundary maps in the Morse complex, which clearly mean that the homology groups of $\CP_{n+1}$ are torsion free and the Betti numbers are exactly equal to number of critical cells.

\begin{theorem}[{\cite[Lemma 7]{nekrasov2016cyclopermutohedron}}]\label{boundary vanish}
    The boundary operators of the Morse complex vanish.
\end{theorem}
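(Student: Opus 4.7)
The aim is to prove $\langle \tilde{\partial}\tau, \sigma\rangle = 0$ for every pair of critical cells $\tau^{p+1}, \sigma^p$, where the coefficient is computed by formula \eqref{bdryeq2}. Throughout, we fix the canonical orientation (see \Cref{canonical orientation}) on every cell of $\CP_{n+1}$, and use these when computing incidence numbers $[\tau':\tilde{\sigma}]$ and the weights $w(c)$ of gradient paths.

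The first step is to show that $w(c) = +1$ for every gradient path $c$ joining two critical cells. By \Cref{paths are good} such a path is good, so each of its elementary triples $(\sigma_i, \sigma_{i+1}, \tau_i)$ fits the good-triple template of \Cref{good path lemma}. That lemma gives $[\tau_i:\sigma_i][\tau_i:\sigma_{i+1}] = -1$ in canonical orientations, which is precisely the identity characterising the orientation on $\sigma_{i+1}$ induced from the canonical orientation on $\sigma_i$. An induction along the path then shows that the induced orientations at every $\sigma_i$, and in particular at the terminal cell $\sigma_{t+1}$, coincide with the canonical ones; hence $w(c) = +1$. The second step invokes \Cref{paths between critical cells} to conclude that there are exactly two gradient paths from codimension-one faces of $\tau$ to $\sigma$; let $\tilde{\sigma}_1, \tilde{\sigma}_2$ be the two starting cells. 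Substituting into \eqref{bdryeq2}, the previous step collapses the double sum to
\[\langle \tilde{\partial}\tau, \sigma\rangle \;=\; [\tau:\tilde{\sigma}_1] + [\tau:\tilde{\sigma}_2].\]
It remains to show that this sum is zero, which we do by inspecting each of the three cases of \Cref{paths between critical cells} and verifying that the triple $(\tilde{\sigma}_1, \tilde{\sigma}_2, \tau)$ is \emph{itself} a good triple in the sense of \Cref{good path lemma}, i.e.\ can be written as $\tilde{\sigma}_1 = (X,\{k\},I,Y)$, $\tilde{\sigma}_2 = (X,I,\{k\},Y)$, $\tau = (X,\{k\}\cup I,Y)$. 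Once this is established, the good path lemma yields $[\tau:\tilde{\sigma}_1][\tau:\tilde{\sigma}_2] = -1$, so the two incidence numbers are opposite and cancel.

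The main obstacle is this final case-by-case analysis: for each of the three scenarios described in \Cref{paths between critical cells} one must unwind the construction of the two gradient paths (which is determined by the pairing rules of the Morse function) in order to pin down $\tilde{\sigma}_1$ and $\tilde{\sigma}_2$, while keeping careful track of the cyclic ordering and of the block containing $n+1$. For instance, in Case $(1)$ one expects the two starting faces to be obtained from $\tau=(\nabla,\{n+1,\dots\})$ by splitting off the singleton $\{k\}$ on either side of $I := \{n+1,\dots\}\setminus\{k\}$, which is already in good-triple form with $X=\nabla$ and $Y$ empty; Cases $(2)$ and $(3)$ require analogous explicit descriptions. Once these two faces have been identified in every case, the good-triple structure is transparent and the vanishing of the Morse boundary follows from a single application of \Cref{good path lemma} in each scenario.
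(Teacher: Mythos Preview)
Your proposal is correct and follows essentially the same approach as the paper's proof: both use \Cref{paths are good} together with \Cref{good path lemma} to get $w(c)=1$ along every gradient path, reduce $\langle\tilde\partial\tau,\sigma\rangle$ to the sum of the two initial incidence numbers, and then finish by observing that the triple formed by $\tau$ and the two starting faces is itself a good triple. The paper simply asserts this last point in one line (``Since the triple $(a_1,\sigma,\alpha_1)$ also forms a good pair''), whereas you flag the case-by-case verification as the main remaining work; either way the argument is the same.
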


\begin{proof}
From \cref{bdryeq2}, recall that
$$ \langle \tilde{\partial}\sigma, \tau \rangle = \sum_{\tilde{\sigma}_p < \sigma} \langle \partial \sigma, \tilde{\sigma} \rangle \sum_{c \in \Gamma(\tilde{\sigma}, \tau)} w(c)$$
Where, $\Gamma(\tilde{\sigma}, \tau)$ denote the set of all gradient paths from $\tilde{\sigma}$ to $\tau$.

By \cref{paths are good} the paths between critical cells are good paths. 
Hence $\forall \tilde{\sigma} < \sigma,\ \forall c \in \Gamma(\tilde{\sigma}, \tau)$, we have $ w(c)=1 $. 
Let us denote these two paths as 
$C:=\sigma, a_1, b_1, \dots, a_t, b_t,\tau$ and $D=\sigma, \alpha_1, \beta_1, \dots, \alpha_t, \beta_t,\tau$. 
Then $\langle \tilde{\partial}\sigma, \tau \rangle = \langle \partial \sigma, a_1 \rangle + \langle \partial \sigma, \alpha_1 \rangle$. Since the triple $(a_1, \sigma, \alpha_1)$ also forms a good pair, $[\sigma:a_1]\cdot [\sigma:\alpha_1]=-1$ implying $\tilde{\partial}=0$.
\end{proof}

\begin{corollary}[{\cite[Lemma 6]{nekrasov2016cyclopermutohedron}}]

The homology of $\CP_{n+1}$ is torsion free and the Betti numbers are given as follows
\[ b_i =
	\begin{cases} 
	2^n + \frac{2^n-3n-2}{2}, & i=n-2; \\
	\binom{n}{i}, & 0 \leq i < n-2.
	\end{cases}
\]
\end{corollary}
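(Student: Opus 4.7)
The plan is to combine \Cref{boundary vanish} with a direct enumeration of critical cells. Since the boundary operators of the Morse complex all vanish, the Morse complex computes homology with trivial differentials, so $H_i(\CP_{n+1})$ is free abelian of rank equal to the number of critical $i$-cells. Hence the entire task reduces to counting cells of the two types listed in Lemma 5 of \cite{nekrasov2016cyclopermutohedron} (the classification recalled before \Cref{paths between critical cells}).

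First I would recall the dimension convention: a cyclically ordered partition into $m$ blocks labels a cell of dimension $n+1-m$. For a Type 1 critical cell $(\nabla, \{n+1,\dots\})$ the string $\nabla$ consists of $s$ singletons arranged in decreasing order, so the partition has $s+1$ blocks, forcing $s+1 \geq 3$ and giving dimension $k = n-s$. Since the $s$ singletons are determined by which $s$-element subset of $\{1,\dots,n\}$ they come from (the decreasing order is forced), Type 1 contributes exactly $\binom{n}{n-k} = \binom{n}{k}$ cells in each dimension $0 \leq k \leq n-2$. Type 2 critical cells $(\{i\},I,\{n+1,\dots\})$ all have exactly three blocks, so they contribute only in dimension $n-2$.

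Next I would enumerate Type 2 cells. For each choice of minimum element $i \in [n]$, the condition $i<I$ forces $I$ to be a nonempty subset of $\{i+1,\dots,n\}$, giving $2^{n-i}-1$ choices; the $(n+1)$-block is then determined. Summing,
\[
\sum_{i=1}^{n}(2^{n-i}-1) \;=\; \sum_{j=0}^{n-1}2^{j}\;-\;n \;=\; 2^{n}-1-n.
\]
Adding the Type 1 contribution $\binom{n}{2}$ in the top dimension yields the claimed rank in dimension $n-2$, while for $0 \leq i < n-2$ only the Type 1 count survives, giving $\binom{n}{i}$.

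There is no real obstacle here: once \Cref{boundary vanish} is in hand the computation is a pure bookkeeping exercise, and the only point that merits care is making sure the decreasing-order convention for $\nabla$ and the condition $i < I$ are correctly reflected as unordered choices of subsets, so that no over- or undercounting occurs. The dimension ranges imposed by $n-k+1 \geq 3$ (i.e.\ $k \leq n-2$) must also be checked to confirm that no critical cell of Type 1 or Type 2 lies outside $0\leq k\leq n-2$, which justifies the vanishing of the homology in all other degrees.
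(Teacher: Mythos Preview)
Your approach is exactly the one the paper has in mind: once \Cref{boundary vanish} gives vanishing differentials in the Morse complex, the Betti numbers are just the numbers of critical cells, and you carry out the count of Type~1 and Type~2 cells that the paper leaves implicit. The bookkeeping (dimension $=n+1-\#\text{blocks}$, decreasing $\nabla$ forcing an unordered subset, $i<I$ forcing $I\subseteq\{i+1,\dots,n\}$) is all correct.

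One point you should not gloss over: your count in top dimension is
\[
\binom{n}{2}+\bigl(2^{n}-1-n\bigr)=2^{n}+\frac{n^{2}-3n-2}{2},
\]
whereas the displayed formula in the statement reads $2^{n}+\frac{2^{n}-3n-2}{2}$. These agree only when $n^{2}=2^{n}$; for $n=3$ the latter is not even an integer, while a direct check on $\CP_{4}$ (six vertices, twelve edges, Euler characteristic $-6$) confirms $b_{1}=7=2^{3}+\frac{9-9-2}{2}$. So your argument is fine, but you should flag that the exponent in the numerator is a typo for $n^{2}$ rather than assert that your sum ``yields the claimed rank'' as printed.
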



\section{The bi-cyclopermutohedron and its mod \texorpdfstring{$2$}{2} homology}\label{sec5}

In this Section we first construct a certain quotient of $\CP_{n+1}$ then define a discrete Morse function on it and use it compute the mod-$2$ homology. 

Consider a $\mathbb{Z}_2$ action on $\CP_{n+1}$ given by the involution.
\begin{equation}\label{involution}
\begin{gathered}
r: \CP_{n+1} \longrightarrow CP_{n+1}\\
(I_1,I_2,\dots,I_{k-1},I_k) \mapsto (I_{k-1},\dots, I_2,I_1,I_k).
\end{gathered}
\end{equation}
Essentially the action identifies cyclically ordered partitions that are obtained by cyclically permuting blocks in either direction.
Clearly this action is fixed point free and we have the quotient $\CP_{n+1}/\mathbb{Z}_2$ which we name the \emph{bi-cyclopermutohedron} and denote it by $\QP_{n+1}$. 
See \Cref{qpexample} for an example when $n=3$.

\begin{definition}
    The regular CW complex bi-cyclopermutohedron $\QP_{n+1}$ is defined as:
\begin{itemize}
	\item For $k = 0, 1 ,\dots, n-2$, the k-cells of $\QP_{n+1}$ are labeled by (all possible) bi-cyclically ordered partitions of the set $[n + 1]$ into $(n-k+1)$ non-empty parts.
	\item A closed cell $\bar{F}$ contains a cell $F^{\prime}$ whenever the label of $F^{\prime}$ refines that of $F$.
\end{itemize}
\end{definition}

Recall that for a generic length vector $\ell:=(l_0,l_1, \dots, l_{n}) \in \mathbb{R}_+^{n+1}$ the moduli space of planar polygons $\mathscr{M}_\ell$ admits a natural free $\mathbb{Z}_2$ action; wherein each polygon is mapped to its reflection about the X-axis. 
The quotient space $\mathscr{M}_\ell/\mathbb{Z}_2$, denoted $\overline{\mathscr{M}}_\ell$, is the space of polygons viewed up to the action of all isometries.
Note that the involution defined in \Cref{involution} mimics the above reflection. 
Moreover the complex $\QP_{n+1}$ is the ``universal object'' for the moduli space $\overline{\mathscr{M}}_\ell$ in the same sense as  $\CP_{n+1}$ is for $\mathscr{M}_\ell$.

\begin{figure}
\centering
\definecolor{zzttqq}{rgb}{0.6,0.2,0.}
\definecolor{ududff}{rgb}{0.30196078431372547,0.30196078431372547,1.}
\begin{tikzpicture}[line cap=round,line join=round,>=triangle 45,x=1.0cm,y=1.0cm]
\clip(1.5,0.) rectangle (9.5,7.);
\draw[line width=1.pt,color=zzttqq] (4.,2.) -- (5.58,4.48) -- (7.,2.) -- cycle;
\draw[line width=1.pt,color=zzttqq] (4,2) to[out=90,in=200] (5.58,4.48);
\draw[line width=1.pt,color=zzttqq] (5.58,4.48) to[out=-20,in=-270] (7,2);
\draw[line width=1.pt,color=zzttqq] (4,2) to[out=-30,in=-140] (7,2);
\draw [line width=1.pt,color=zzttqq] (4.,2.)-- (5.58,4.48);
\draw [line width=1.pt,color=zzttqq] (5.58,4.48)-- (7.,2.);
\draw [line width=1.pt,color=zzttqq] (7.,2.)-- (4.,2.);
\draw [line width=1.pt] (5.56,5.56) circle (0.35cm);
\draw [line width=1.pt] (8.,1.8) circle (0.35cm);
\draw [line width=1.pt] (2.8,1.72) circle (0.35cm);
\draw (5.1,5.3) node[anchor=north west] {\{2\}};
\draw (7.58,1.5) node[anchor=north west] {\{2\}};
\draw (1.7,2) node[anchor=north west] {\{2\}};
\draw (7.6,2.7) node[anchor=north west] {\{1\}};
\draw (4.4,5.9) node[anchor=north west] {\{1\}};
\draw (2.3,1.4) node[anchor=north west] {\{1\}};
\draw (5.1,6.5) node[anchor=north west] {\{3\}};
\draw (6.9,2.1) node[anchor=north west] {\{3\}};
\draw (2.3,2.6) node[anchor=north west] {\{3\}};
\draw (8.22,2.1) node[anchor=north west] {\{4\}};
\draw (5.76,5.8) node[anchor=north west] {\{4\}};
\draw (3.1,1.96) node[anchor=north west] {\{4\}};
\begin{scriptsize}
\draw [fill=ududff] (4.,2.) circle (2.5pt);
\draw [fill=ududff] (5.58,4.48) circle (2.5pt);
\draw [fill=ududff] (7.,2.) circle (2.5pt);
\end{scriptsize}
\end{tikzpicture}
  \caption{The complex $\QP_4$}
  \label{qpexample}
\end{figure}
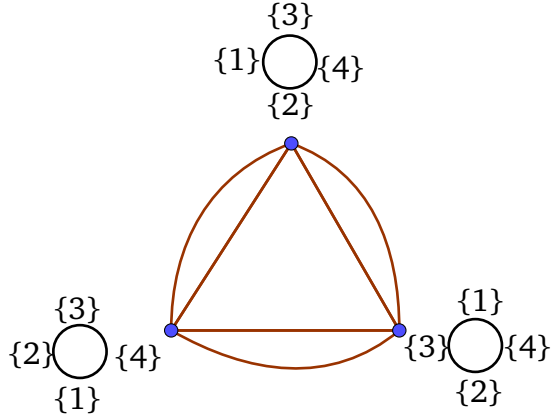
    
We begin by introducing some notions that are useful when dealing with equivalence classes of bi-cyclically ordered partitions. 
The aim is to show how to choose a nice representative for these equivalence classes. 
These ideas were originally introduced by Adhikari in his Masters' thesis \cite{nachiketa2017} written under the supervision of the first author.

\begin{definition}
Let $\lambda = (I_1,I_2, \ldots, I_k)$ be a cyclically ordered partition of $n$. Let $j$ be the greatest element outside the $n+1$ set and $j \in I_l$ for $1\leq l < k$. Further, let $i$ be the greatest element outside $n+1$ set and $I_l$ and $i \in I_m$ for $1 \leq m < k$ and $m \neq l$. Then $\lambda$ is said to be of class $(i,j)$ if $m \leq l$ and of class $(j,i)$ otherwise. The class of $\lambda$ is denoted cl($\lambda$).
\end{definition}

\begin{definition}
	Let $\lambda$ be a cell of cl$(i,j)$. $\lambda$ is called an \emph{ ascending cell} if $i < j$ and \emph{ descending} otherwise.
\end{definition}

\begin{lemma}
 The cells of $\CP_{n+1}$ can be partitioned into two classes: one with ascending cells and the other with descending cells. Involution defined in Equation \eqref{involution} establishes bijection between these two classes.
	\begin{center}
		\{Ascending Cells\} $\longleftrightarrow$\{Descending Cells\}
	\end{center}
\end{lemma}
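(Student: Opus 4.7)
The plan is to verify three things: first, every cell of $\CP_{n+1}$ has a unique class and hence is either ascending or descending; second, the involution $r$ reverses the class; third, since $r$ is an involution the two classes are in bijection.

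First I would check that the class is well-defined on every cell. Since the $k$-cells of $\CP_{n+1}$ correspond to partitions into $n+1-k$ parts with $0 \leq k \leq n-2$, every cell has at least three blocks, so in particular the complement $[n+1] \setminus (N \cup I_l)$ is non-empty and the element $i$ exists. Moreover $i \notin I_l$ forces $m \neq l$, so exactly one of $m < l$ or $m > l$ holds, giving $\lambda$ a unique class in $\{(i,j),(j,i)\}$. As $i < j$ always (by the definitions), the cell is ascending precisely when $\mathrm{cl}(\lambda)=(i,j)$ (i.e.\ $m < l$) and descending precisely when $\mathrm{cl}(\lambda)=(j,i)$ (i.e.\ $m > l$). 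Thus the two classes partition the cells of $\CP_{n+1}$.

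Next I would track the behaviour of the involution on these classes. Let $\lambda = (I_1, \dots, I_{k-1}, N)$ with $j \in I_l$ and $i \in I_m$, and write $\mu := r(\lambda) = (I_{k-1}, \dots, I_1, N)$. In $\mu$ the element $j$ is still the largest element outside the $N$-set, now occupying the block in position $k-l$; likewise the second candidate is still $i$, whose block now occupies position $k-m$. Therefore, if $\lambda$ is ascending ($m < l$), then $k-m > k-l$ and $\mu$ is descending; if $\lambda$ is descending ($m > l$), the inequalities reverse and $\mu$ is ascending. Hence $r$ interchanges the two classes.

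Finally, since $r^2 = \mathrm{id}$ on $\CP_{n+1}$, the restriction of $r$ to ascending cells is a bijection onto descending cells with inverse the restriction of $r$ to descending cells. I do not expect any real obstacle; the only subtle point is confirming that the class is well-defined on every cell of $\CP_{n+1}$, which reduces to the observation that every cell has at least three blocks and $j$ and $i$ lie in distinct non-$N$ blocks.
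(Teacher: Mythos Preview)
Your proof is correct. The paper states this lemma without proof, presumably regarding it as immediate from the definitions; your argument supplies exactly the routine verification one would expect, namely that every cell has at least three blocks so the class is well-defined, and that $r$ sends the block in position $l$ to position $k-l$, reversing the inequality $m<l$ versus $m>l$ and hence swapping ascending and descending.
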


Therefore, each cell in the quotient complex $\QP_{n+1}$ is an equivalence class containing an ascending cell and a descending cell (each is the reflection of the other). 
Let $\pi:\CP_{n+1} \rightarrow \QP_{n+1}$ be the quotient map and for $\lambda\in\CP_{n+1}$ we denote $\pi(\lambda)$ by $ \bar{\lambda}$. 
Henceforth, unless otherwise mentioned, for every cell $\bar{\lambda}\in\QP_{n+1}$ we assume that $\lambda$, the chosen representative, is ascending.

\begin{definition}
A cell $\bar{\lambda}$ is said to be of \emph{class} $\{i,j\}$ if one of the preimages under the quotient map is of class $(i,j)$. We denote the class by cl($\bar{\lambda}$).
\end{definition}

Now, we define an order on the cells of $\QP_{n+1}$.

\begin{definition}
Let $\bar{\lambda}$ and $\bar{\lambda}^{\prime}$ be two cells of class $\{i,j\}$ and $\{i^{\prime},j^{\prime}\}$ respectively. If min\{$i,j$\} $\leq$ min\{$i^{\prime},j^{\prime}$\} and max\{$i,j$\} $\leq$ max\{$i^{\prime},j^{\prime}$\}, then $\bar{\lambda}^{\prime}$ is said to be \emph{ higher} than $\bar{\lambda}$.
\end{definition}

\begin{lemma}\label{boundary}
	If $\bar{\alpha}, \bar{\beta} \in \QP_{n+1}$ and $\bar{\alpha}$ is contained in the boundary of $\bar{\beta}$, then $\bar{\alpha}$ is higher than $\bar{\beta}$.
\end{lemma}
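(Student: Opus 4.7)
The plan is to lift the boundary relation in $\QP_{n+1}$ to a refinement of cyclically ordered partitions in $\CP_{n+1}$, then track how the class $\{i,j\}$ changes when blocks are split. Note that by the very definition of the class, $j$ is always the greatest element of $[n+1]$ outside the $n+1$-set, so $j = \max\{i,j\}$ regardless of the ordering convention. The preceding lemma shows that the class is invariant under the reflection involution, so the argument may be carried out on any choice of representatives $\alpha, \beta \in \CP_{n+1}$ with $\alpha$ refining $\beta$. Write $\beta = (I_1, \ldots, I_k)$ with $n+1 \in I_k$; then $\alpha$ is obtained by splitting each $I_p$ into consecutive sub-blocks (with $n+1$ remaining in the last block). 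Denote the class of $\bar\alpha$ by $\{i_\alpha, j_\alpha\}$ and of $\bar\beta$ by $\{i_\beta, j_\beta\}$.

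First I would compare the larger entries. Let $J \subseteq I_k$ be the $n+1$-set of $\alpha$. Since $J \subseteq I_k$, we have $[n+1] \setminus J \supseteq [n+1] \setminus I_k$, and taking maxima of both sides gives $j_\alpha \geq j_\beta$. Hence $\max\{i_\alpha, j_\alpha\} \geq \max\{i_\beta, j_\beta\}$.

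For the smaller entries, let $I_l$ be the block of $j_\beta$ in $\beta$ and let $K$ be the block of $j_\alpha$ in $\alpha$. The key claim is $J \cup K \subseteq I_k \cup I_l$, from which complementation gives $[n+1] \setminus (J \cup K) \supseteq [n+1] \setminus (I_k \cup I_l)$, and maxima then yield $i_\alpha \geq i_\beta$. The claim splits into two cases. If $j_\alpha = j_\beta$, then $j_\alpha$ still belongs to a sub-block of $I_l$, so $K \subseteq I_l$. If $j_\alpha > j_\beta$, then $j_\alpha$ cannot belong to $[n+1] \setminus I_k$ since $j_\beta$ is by definition the maximum of that set, forcing $j_\alpha \in I_k$ and hence $K \subseteq I_k$. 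In either case the containment $J \cup K \subseteq I_k \cup I_l$ holds.

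Combining, $\min\{i_\alpha, j_\alpha\} \geq \min\{i_\beta, j_\beta\}$ and $\max\{i_\alpha, j_\alpha\} \geq \max\{i_\beta, j_\beta\}$, so $\bar\alpha$ is higher than $\bar\beta$. The principal technicality is really the two-case split for $K$, hinging on whether $j_\alpha$ equals or strictly exceeds $j_\beta$; once this dichotomy is isolated, the rest of the proof reduces to containment and complementation.
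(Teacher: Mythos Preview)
Your proof is correct and follows essentially the same approach as the paper: lift to representatives in $\CP_{n+1}$, use that refinement forces each block of $\alpha$ into a block of $\beta$, and conclude that the relevant maxima can only increase. The paper's version is terser, dispatching the inequality $i_\alpha \geq i_\beta$ with a bare ``similarly''; your two-case split on whether $j_\alpha = j_\beta$ or $j_\alpha > j_\beta$ (and hence whether $K \subseteq I_l$ or $K \subseteq I_k$) is precisely the detail that justifies that ``similarly'', so your argument is a more careful rendering of the same idea rather than a different route.
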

\begin{proof}
Let cl$(\bar{\beta})=\{i,j\}$ and $i<j$. Since $\bar{\alpha}$ is in the boundary of $\bar{\beta}$, each block of $\bar{\alpha}$ is a subset of a block of $\bar{\beta}$. Therefore, the largest element in $\bar{\alpha}$ outside the $n+1$ set (say $j^\prime$) has to be greater than or equal to $j$, \textit{i.e.,} $j^\prime\geq j$. Similarly, the second largest element outside the $n+1$ set and the set containing $j^\prime$ has to be greater than or equal to $i$.
\end{proof}



Let us define a discrete Morse function on $\QP_{n+1}$ inductively.

\noindent\textbf{Step $1$:} Pair $\alpha=(\ldots,1,I,\ldots) \text{ and } \beta=(\ldots,1\cup I,\ldots)$ in $\CP_{n+1}$ if the following conditions hold:
\begin{enumerate}
	\item $n+1 \not\in I$.
	\item $\alpha$ is ascending.
	\item cl($\alpha$) = cl($\beta$).
\end{enumerate}
Note that, the conditions $2$ and $3$ together imply that $\beta$ also is an ascending cell.

\noindent\textbf{Step $k$:} Pair $\alpha=(\ldots,k,I,\ldots)$ and $\beta=(\ldots,k\cup I,\ldots)$ if the following conditions hold.
\begin{enumerate}
	\item $n+1 \not\in I$.
	\item $\alpha$ and $\beta$ have not yet been paired.
	\item $\alpha$ is ascending.
	\item cl($\alpha$) = cl($\beta$)
\end{enumerate}

After the $(n-2)^\text{nd}$ step, we have-

\noindent\textbf{The final step:} If $\alpha$ and $\beta$ have been paired in $\CP_{n+1}$, then match $\bar{\alpha}$ with $\bar{\beta}$ in $\QP_{n+1}$ (here $\bar{\alpha}$ with $\bar{\beta}$ represents the image of $\alpha$ and $\beta$ under the map $\pi:\CP_{n+1} \rightarrow \QP_{n+1}$).


\begin{lemma}\label{higherpath}
	If there is a gradient path $$\bar{\beta}_0,\bar{\alpha}_1,\bar{\beta}_1,\dots, \bar{\alpha}_p,$$ then $\bar{\alpha_p}$ is higher than $\bar{\beta_0}$.
\end{lemma}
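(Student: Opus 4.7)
The plan is a short induction on the length of the gradient path, leaning on two elementary ingredients. First, the relation ``higher than'' on cells of $\QP_{n+1}$ is transitive and reflexive, as is clear from its definition via coordinatewise comparison of $\min$ and $\max$ of the class. Second, by construction of the Morse matching---specifically condition $(4)$ in each step---matched pairs $(\alpha,\beta)$ satisfy $\mathrm{cl}(\alpha) = \mathrm{cl}(\beta)$, so the quotient cells $\bar{\alpha}$ and $\bar{\beta}$ share the same class and are therefore at the same height.

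With these in hand, I would analyze a single two-step chunk $\bar{\beta}_i,\bar{\alpha}_{i+1},\bar{\beta}_{i+1}$ of the gradient path. The move from $\bar{\beta}_i$ to $\bar{\alpha}_{i+1}$ goes into the boundary, so \Cref{boundary} gives that $\bar{\alpha}_{i+1}$ is higher than $\bar{\beta}_i$. The move from $\bar{\alpha}_{i+1}$ to $\bar{\beta}_{i+1}$ crosses a matched pair, so $\bar{\beta}_{i+1}$ shares the class of $\bar{\alpha}_{i+1}$ and is therefore also higher than $\bar{\beta}_i$.

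Chaining these two-step chunks through the whole path $\bar{\beta}_0,\bar{\alpha}_1,\bar{\beta}_1,\ldots,\bar{\alpha}_p$ and invoking transitivity yields that $\bar{\alpha}_p$ is higher than $\bar{\beta}_0$, which is precisely the claim. I do not anticipate any substantive obstacle: the nontrivial content is already packaged in \Cref{boundary} and in the class-preservation clause of the Morse matching, and what remains is only a routine chaining of inequalities.
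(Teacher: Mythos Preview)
Your proposal is correct and follows essentially the same argument as the paper: both use that matched pairs share a class (condition~(4) of the matching) together with \Cref{boundary} for the boundary step, and then chain these along the path via transitivity. The paper's proof is simply a more compressed version of what you wrote.
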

\begin{proof}
Since we only match the cells in the same class, we have $\text{cl}(\bar{\alpha}_i)=\text{cl}(\bar{\beta}_i)$ for each $i \in \{1,2,\dots, p-1\}$. Moreover, using \cref{boundary}, we get that $\text{cl}(\bar{\alpha}_j)\geq\text{cl}(\bar{\beta}_{j-1})$ for each $j \in \{1,\dots, p\}$. Thus, the result follows.
\end{proof}

\begin{theorem}
The pairing on $\QP_{n+1}$, as described above is a discrete Morse function.
\end{theorem}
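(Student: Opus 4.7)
The plan is to verify two properties: first, that the stated rule defines a valid discrete vector field on $\QP_{n+1}$; second, that it admits no closed $V$-paths. The first property is immediate from the inductive construction, since at step $k$, clause (2) explicitly excludes cells that were already matched at an earlier step, so no cell is paired twice, and the required boundary relation $\alpha<\beta$ is built into each matched pair.

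For the no-closed-path property, the approach is by contradiction. Suppose $\bar{\sigma}_1, \bar{\tau}_1, \ldots, \bar{\sigma}_t, \bar{\tau}_t, \bar{\sigma}_1$ is a closed $V$-path. Each pair $(\bar{\sigma}_i, \bar{\tau}_i)$ preserves class by clause (4), and each boundary step $\bar{\tau}_i > \bar{\sigma}_{i+1}$ can only move to an equal or higher class by \Cref{boundary}; applied cyclically, \Cref{higherpath} then forces the class to be constant throughout the loop. Call this common class $\{i,j\}$.

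With the class fixed, the next step is to lift the loop to $\CP_{n+1}$ and reduce to the established fact that the Nekrasov--Panina pairing on $\CP_{n+1}$ is a discrete Morse function. Define the $\mathbb{Z}_2$-equivariant extension $\tilde V$ on $\CP_{n+1}$ by pairing $\alpha$ with $\beta$ whenever $\bar\alpha$ is paired with $\bar\beta$ in $V$, so that ascending cells are paired exactly as in $V$ and descending cells are paired by equivariance under the involution $r$. Starting from the ascending representative $\sigma_1$ of $\bar{\sigma}_1$, lift $\bar{\sigma}_{i+1}$ at each step to whichever preimage is a face of the currently lifted $\tau_i$ in $\CP_{n+1}$. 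The resulting lifted sequence is a $\tilde V$-path that either closes after $t$ steps or ends at $r(\sigma_1)$; in the latter case, concatenating with its reflection yields a closed $\tilde V$-path of length $2t$.

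The hard part will be showing that $\tilde V$ itself admits no closed paths on $\CP_{n+1}$. Because $V$ replaces the Nekrasov--Panina condition ``$k<I$'' at step $k$ with the class-preservation condition, $\tilde V$ is not literally a sub-pairing of the original Morse function on $\CP_{n+1}$; for instance, the ascending cell $(2,3,1,45)$ is paired by $\tilde V$ at step $3$ (with $(2,13,45)$), whereas the Nekrasov--Panina rule would pair it at step $2$. Within a fixed class, however, the step-number argument can be mimicked: at each transition one analyses the two possibilities for $\sigma_{i+1}$ as a face of $\tau_i$ (splitting the block $\{k_i\}\cup I_i$ other than as $(\{k_i\},I_i)$, or splitting a different block of $\tau_i$), and one verifies that the next recorded step number $k_{i+1}$ is strictly greater than $k_i$. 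A strictly increasing sequence of step numbers cannot return to its start, contradicting the existence of the closed loop and completing the argument.
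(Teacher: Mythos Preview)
Your overall contradiction-and-lift strategy matches the paper's, but the argument goes astray where you claim the pairing on ascending cells is not a sub-pairing of the Nekrasov--Panina matching. Your proposed counterexample is incorrect: the cell $(2,13,45)$ is already paired at step~$1$ with $(2,1,3,45)$ (both are ascending of class $(2,3)$, and all the step-$1$ conditions hold), so clause~(2) at step~$3$ (``$\alpha$ and $\beta$ have not yet been paired'') prevents $(2,3,1,45)$ from being paired with $(2,13,45)$. In fact $(2,3,1,45)$ remains unpaired and is therefore critical, exactly as the critical-cell theorem predicts. With this example dispatched, the paper's assertion that the matching on ascending cells is a subset of the Nekrasov--Panina matching stands unrefuted, and your proposed workaround via a direct step-number argument is unnecessary.

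The paper's lift is also simpler than yours. Because every cell in the closed path has the same unordered class $\{i,j\}$, and because refinement preserves the relative order of the block containing $i$ and the block containing $j$, an ascending cell of ordered class $(i,j)$ cannot lie in the boundary of a descending cell of ordered class $(j,i)$. Hence, starting the lift at the ascending representative $\sigma_1$, one remains among ascending cells throughout and the lifted path closes at $\sigma_1$ itself; there is no need to introduce an equivariant extension $\tilde V$, to allow for the endpoint $r(\sigma_1)$, or to double the path. The lifted cycle is then directly a $V$-path for the Nekrasov--Panina matching on $\CP_{n+1}$, which is acyclic.
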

\begin{proof}
On the contrary, assume that the matching defined is not acyclic, \emph{i.e.} there is a path
$$\bar{\alpha}_0,\bar{\beta}_0,\bar{\alpha}_1,\bar{\beta}_1,\dots,\bar{\alpha}_p$$ with $p>1$ and $\bar{\alpha}_0=\bar{\alpha}_p$. Since $\bar{\alpha}_0$ and $\bar{\beta}_0$ are matched, they are in the same class, Therefore, using \cref{higherpath}, we get that $\text{cl}(\bar{\alpha}_0)=\text{cl}(\bar{\beta}_0)=\text{cl}(\bar{\alpha}_1)=\ldots = \text{cl}(\bar{\alpha}_p)$.

We now “lift” this cycle to $\CP_{n+1}$. Let $a_0$ be the ascending cell such that $\pi(a_0) = \bar{\alpha_0}$. Let $b_0$ the cell with which $a_0$ is paired (in particular, $\pi(b_0) = \bar{\beta_0}$). Next, suppose $a_1$ is ascending with $\pi(a_1) = \bar{\alpha_1}$. Note that $cl(a_0) = cl(b_0) = cl(a_1) = (i, j)$ for some $i < j$. If $a_1$ is not in the boundary of $b_0$, then it must be in the boundary of $r(b_0)$ (for otherwise $a_1$ would not be in the boundary of $b_0$). But since $cl(r(b_0)) = (j, i)$, we have a cell of class $(i, j)$ in the boundary of a cell of class $(j, i)$, which is impossible. Hence $a_1$ is in the boundary of $b_0$. Continuing thus, we obtain a path $a_0, b_0, a_1, b_1, \dots , a_p$
with $a_i$ and $b_i$ ascending for each $i$ (and, in particular, $a_0 = a_p$). Thus the cycle in $\QP_{n+1}$ lifts to the cycle in $\CP_{n+1}$. The matching on the ascending cells is, however, a subset of the matching of $\CP_{n+1}$ described in \cref{sec4}, and hence the cycle cannot exist.
\end{proof}

\textbf{Notation:} Let $\lambda$ denote the unique ascending representative of $\bar{\lambda} \in \QP_{n+1}$.

\begin{theorem}
	The critical cells of the discrete Morse function on $\QP_{n+1}$ are the images under $\pi$ of the cells of the type (i,I,$\nabla$,N) with $\nabla <i< I$. 
\end{theorem}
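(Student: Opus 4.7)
The plan is to prove the characterization by two inclusions. Throughout, by the framework of the previous section, a pair $(\alpha,\beta)$ of the underlying $\CP_{n+1}$ Morse matching descends to a pair in $\QP_{n+1}$ precisely when $\alpha$ is ascending and $\mathrm{cl}(\alpha)=\mathrm{cl}(\beta)$. For convenience, given $\alpha=(I_1,\dots,I_{t-1},N)$ I write $j^*\in I_l$ for the greatest non-$N$ element and $i^*\in I_m$ for the greatest element outside $I_l$ and $N$; ascending means $m<l$ and in that case $\mathrm{cl}(\alpha)=(i^*,j^*)$.

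For the sufficiency direction, fix $\alpha=(i,I,\nabla,N)$ with $\nabla<i<I$. A direct check identifies $j^*=\max I$ at position $2$ and $i^*=i$ at position $1$, so $\alpha$ is ascending with class $(i,\max I)$. The singletons in $\alpha$ are $\{i\}$ and the entries of $\nabla$; each singleton inside $\nabla$ has for right-neighbour either a strictly smaller singleton (by the decreasing convention on $\nabla$) or the block $N$, so the pairing conditions fail for all of them. The one remaining candidate is $\{i\}$ with $I$, yielding $\beta=(\{i\}\cup I,\nabla,N)$; but $\beta$ has class $(\max I,\max\nabla)$, which is descending, so the pair is rejected in $\QP_{n+1}$. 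The analogous check that $\alpha$ cannot serve as the larger side of a pair reduces to splitting the block $I$ (the only non-$N$ block of size at least two), yielding an ascending cell of class $(\min I,\max I)\neq(i,\max I)$; the block $N$ cannot be split because it contains $n+1$.

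For the necessity direction, assume $\alpha$ is ascending but not of the claimed form; I will produce a valid $\QP_{n+1}$-pair involving $\alpha$. A preliminary computation shows that if $J$ is a non-singleton, non-$N$ block of $\alpha$ with $\min J\le i^*_\alpha$, then splitting $J$ at its minimum yields an ascending cell $\alpha'$ with $\mathrm{cl}(\alpha')=\mathrm{cl}(\alpha)$, so $\alpha$ is paired at step $\min J$ from the larger side; and if $J$ is non-singleton, non-$N$ but does not contain $j^*$, class preservation holds automatically regardless of $\min J$. Since by definition of $i^*$ every non-$N$ element exceeding $i^*$ lies in $I_l$, the only non-singleton non-$N$ block of an unpaired $\alpha$ must be $I_l$ itself, and moreover one must have $\min I_l>i^*$. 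Among the singleton positions $\{b_j\}$, any right-neighbour other than $I_l$ or $N$ gives class preservation automatically (the merge cannot place $i^*$ and $j^*$ into the same block), so blocking all such pairings forces $b_j>b_{j+1}$ on every consecutive pair of singletons. The singleton $\{b_{l-1}\}$ whose right-neighbour is $I_l$ satisfies the pairing conditions (since $b_{l-1}\le i^*<\min I_l$) and can only be blocked by a class change, which happens iff $b_{l-1}=i^*$; the monotonicity $b_1>\dots>b_{l-1}$ established above then forces $l=2$, and combined with the decreasing constraint on singletons past $I_l$ and the maximality $b_1=i^*=\max_j b_j$, this delivers exactly the form $(i,I,\nabla,N)$ with $\nabla<i<I$ (allowing $|I|=1$ when there is no non-singleton non-$N$ block at all).

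The main difficulty is the class-bookkeeping across splits and merges. One must keep clearly in view that $i^*$ is defined as a maximum over elements \emph{outside} the block $I_l$ rather than as a global second-maximum non-$N$ element: this subtlety is precisely what permits the block $I$ in the critical form to be arbitrarily large (splitting $I$ sends $i^*$ from $i$ to $\min I$, producing a class-mismatch) and explains why the critical list for $\QP_{n+1}$ is richer than the union of the type-1 and type-2 critical lists for $\CP_{n+1}$.
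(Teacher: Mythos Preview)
Your argument is correct and, for the necessity direction, follows the same underlying idea as the paper: exhibit a valid pairing whenever the ascending representative deviates from the form $(i,I,\nabla,N)$. The paper organizes this as four sequential claims about the blocks $I_1$ and $I_2$ (and then waves at the rest), whereas you organize it by block type—first using splits to eliminate any non-singleton non-$N$ block other than $I_l$, then using merges to force the decreasing pattern on the singletons and pin down $l=2$. Your route is a bit cleaner because it makes explicit the single obstruction to splitting $I_l$ (namely $\min I_l>i^*$, which changes the class), and your closing remark correctly identifies this class-bookkeeping as the reason the $\QP_{n+1}$ critical list is strictly larger than the ascending part of the $\CP_{n+1}$ one. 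You also supply the sufficiency direction (that cells of this form are genuinely unmatched on both sides), which the paper's proof omits; the paper only argues that a critical cell must take this shape.
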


\begin{proof}
	Assume $\bar{\lambda}=(I_1,I_2 ,\dots,  I_k)$ is critical and cl($\bar{\lambda}$)=$\{i,j\}$.
	
\textbf{Claim 1: } $ i \in I_1$.

\emph{Proof.}
Assume, without loss of generality that $i \in I_2$.
\begin{enumerate}
	\item If $|I_1| = 1$, then by construction $I_{1} < i \leq I_2$. Hence, the cell $ \lambda =(I_1,I_2 ,\dots,  I_k)$ can be matched with $ \lambda' =(I_1 \cup I_2 ,\dots, I_k)$ as they have the same class type.
			
	\item Let $|I_1| > 1$ and denote the minimum of $I_{1}$ by $m$. Then the cells $ \lambda =(I_1,I_2 ,\dots,  I_k)$ and $ \lambda' =(m,I_1-\{m\},I_2 ,\dots, I_k)$ can be matched as they have the same class type.
\end{enumerate}
		
\textbf{Claim 2: }$I_1=\{i\}$

\emph{Proof.}
Assume on the contrary that $|I_1|>1$. Denote the minimum of $I_1$ be $m$. Then the cells $ \lambda$ and $ \lambda' =(m,I_{1}-\{m\} ,\dots,  I_k)$ can be matched.
	    
\textbf{Claim 3: }$j \in I_2$.

\emph{Proof.}
Assume without loss of generality that $j \in I_3$.
\begin{enumerate}
	\item $|I_{2}| = 1$.
	\begin{enumerate}
		\item If $I_{2} < I_3$, then the cell $ \lambda =(\{i\},I_2 ,\dots,  I_k)$ and $ \lambda' =(\{i\},I_2\cup I_3 ,\dots, I_k)$ can be matched as they have the same class type.
				
		\item If $I_{2} \not < I_3$, then $\exists m \in I_{3}$ such that $m<I_{2}$ The cell $ \lambda =(\{i\},I_2 ,\dots,  I_k)$ and $ \lambda' =(\{i\},I_2, m, I_3-\{m\} ,\dots, I_k)$ can be matched as they have the same class type.
	\end{enumerate} 
			
	\item Let $|I_2| > 1$ and denote the minimum of $I_{2}$ by m. Then the cells $ \lambda =(\{i\},I_2 ,\dots,  I_k)$ and $ \lambda' =(\{i\},m,I_2-\{m\},\dots, I_k)$ can be matched as they have the same class type.	
\end{enumerate}

\textbf{Claim 4: }$i<I_2$.

\emph{Proof.}
If $\exists m \in I_2$ such that $m<i$ then Then the cells $ \lambda =(\{i\},I_2 ,\dots,  I_k)$ and $ \lambda' =(\{i\},m,I_{2}-\{m\} ,\dots,  I_k)$ can be matched as they have the same class type.
	    
A similar argument shows that all other subsets $I_3 ,\dots,  I_{k-1}$ are singletons arranged in decreasing order.
\end{proof}
\begin{proposition}
	Let $\bar{\alpha} = (i,I,\nabla,N)$ and $\bar{\beta} = (j,J,\nabla', N')$. If there is a path from $\bar{\alpha}$ to $\bar{\beta}$ then either $ N \cap J=\emptyset$ or $I \cap J=\emptyset$.
\end{proposition}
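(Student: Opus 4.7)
The plan is to argue by contradiction, leveraging the monotonicity of the $n+1$-set along gradient paths. Assume $N\cap J\neq\emptyset$ and $I\cap J\neq\emptyset$, and fix $x\in N\cap J$ and $y\in I\cap J$.

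First I would establish that the block containing $n+1$ is monotonically non-increasing along the path. The pairing rules of the $\QP_{n+1}$ Morse function never merge the $n+1$-set with another block, since the condition $n+1\notin I'$ explicitly forbids it, so mergings preserve the $n+1$-set, and only downward splits can remove elements from it. Hence $N'\subseteq N$, and since $J\cap N'=\emptyset$ (disjoint blocks of $\bar{\beta}$) while $x\in J\cap N$, we conclude $x\in N\setminus N'$; in particular, the path must contain at least one step at which the $n+1$-set is split.

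Next I would track the cyclic positions of the blocks containing $x$ and $y$. At $\bar{\alpha}$, $I$ occupies the position immediately after the first singleton $\{i\}$, and $N$ occupies the last position. The element $x$ begins in $N$; at the step where $x$ is first separated from the $n+1$-set, the block $N_{\tau_k}$ is split into $N_1\cup N_2$ with $n+1\in N_2$ and $x\in N_1$. By the convention of placing the $n+1$-set last in the linear representative, $N_1$ is inserted immediately before $N_2$, so cyclically $N_1$ sits adjacent to $N_2$ on the side opposite to the first singleton. The element $y$ on the other hand begins in $I$, cyclically adjacent to the first singleton $\{i\}$.

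For $x$ and $y$ to end up together in $J$ at $\bar{\beta}$, some pairing merge $(\{k\},I'')$ along the path must unite the block of $x$ and the block of $y$, with one of $x,y$ equal to $k$ and the other lying in $I''$. The crucial observation is that the $n+1$-set acts as a cyclic barrier that no pairing can cross: a merge straddling the $n+1$-set would require including it in the merged pair, which is forbidden. Combined with the placement of $N_1$, this yields the invariant (which I would prove by induction on path length) that descendants of the block originally containing $x$ remain in the cyclic arc on the ``$\nabla$-side'' of the current $n+1$-set, while descendants of the block originally containing $y$ remain in the complementary arc on the ``first-singleton side.'' Hence no pairing merge can ever unite them, contradicting $x,y\in J$.

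The main obstacle is making the two-sides invariant rigorous. One must verify that each admissible operation preserves the separation: pairing merges keep the two merged blocks on the same side; splits of non-$n+1$-blocks split within a single side; and splits of the $n+1$-set adjoin the new fragment only to the $\nabla$-side. This reduces to a careful case analysis exploiting the ascending-representative convention, the forbidden condition $n+1\in I''$, and the order condition $k<I''$ on each pairing.
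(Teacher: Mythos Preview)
Your approach has a conceptual gap, and the paper's argument is both simpler and avoids the difficulties you run into.

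The main issue is the ``two arcs'' invariant. The $n+1$-set is a single block in the cyclic order; removing it leaves \emph{one} arc, not two. So the claim that $x$-descendants and $y$-descendants live in ``complementary arcs'' has no obvious meaning. What you seem to want is a notion of being nearer one end of the arc versus the other, but this is not stable under the merges and splits in the interior, and the element $y\in I$ does not sit at an end of the arc $(i,I,\nabla)$ to begin with. Two further problems: (a) when the $n+1$-set is split as $N=N_1\cup N_2$, both refinements $(\ldots,N_1,N_2)$ and $(\ldots,N_2,N_1)$ are faces, so the new block containing $x$ can appear on \emph{either} side of the remaining $n+1$-set --- your assertion that it only goes to the $\nabla$-side is not justified; (b) in $\QP_{n+1}$ the reflection identification means any ``side'' is only defined up to swapping, so you would additionally need to check your invariant is reflection-invariant at every step.

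The paper bypasses all of this positional bookkeeping by using the numerical invariant $\mathrm{cl}(\cdot)$, which is already known to be monotone along gradient paths (Lemma~\ref{higherpath}) and is manifestly reflection-invariant. The argument is then two lines: pick $x\in N\cap J$ and $t\in I\cap J$, set $m=\max I\geq t>j$. At the first step $\bar{\beta}_k$ where $x$ leaves the $n+1$-set, the freshly created block containing $x$ is disjoint from the block containing $m$ (since $m$ was never in the $n+1$-set), so $x$ and $m$ are outside the $n+1$-set in distinct blocks, forcing $\min\mathrm{cl}(\bar{\beta}_k)\geq\min\{x,m\}>j=\min\mathrm{cl}(\bar{\beta})$. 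This contradicts monotonicity of the class along the remainder of the path. The moral: track the sizes of elements (via $\mathrm{cl}$), not their positions.
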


\begin{proof}
	let $x\in N \cap J$ and $t\in I \cap J$. Clearly $x>j\geq i$ and $t>j\geq i$. Denote the maximum element of $I$ by $m$ and $m>j$. Let the path from $\bar{\alpha}$ to $\bar{\beta}$ be
	$$\bar{\alpha}=\bar{\alpha}_0,\bar{\beta}_0,\bar{\alpha}_1,\bar{\beta}_1,\dots, \bar{\alpha}_p,\bar{\beta}_p=\bar{\beta}$$ 
	During the course of the path, $x$ leaves the set $N$, say at $\bar{\beta}_{k}$. Then min cl$(\bar{\beta}_{k}) \geq \min\{x, m\} \geq \min\{x,t\} > j=\min \mathrm{cl}(\bar{\beta})$. This contradicts the fact that the class increases along a gradient path.
\end{proof}

The following theorem about the paths between critical cells is crucial in computing the homology of $\QP_{n+1}$.

\begin{theorem}\label{paths}
Let $\bar{\alpha} = (i,I,\nabla,N)$ and $\bar{\beta}=(j,J,\nabla', N')$ be two critical cells. If there is a path from $\bar{\alpha}$ to $\bar{\beta}$, then $\bar{\beta}$ takes exactly one of the following form.
	\begin{enumerate}
		\item \begin{enumerate}
			\item If $J=I$ and $N' =N-t$. Then \[ \bar\beta=
			\begin{cases} 
			(t,I,i, \nabla, N'), & \mathrm{if\ } i<t<I,\\
			(i, I, \nabla \cup t, N'), & \mathrm{if\ } t<i. \\
			\end{cases}
			\]
			
			\item  If $N'= N-t$ and $|I|=1$. Then \[ \bar\beta=
			\begin{cases} 
			(I,t,i, \nabla, N'), & \mathrm{if\ } t>I, \\
			(t,I,i, \nabla, N'), & \mathrm{if\ } i<t<I, \\
			(i,I,t\cup \nabla, N'), & \mathrm{if\ } t<i.	
			\end{cases}
			\]
		\end{enumerate}
	
		\item If $N=N'$ and $J=I-j$. Then $$\bar\beta = (j,J,i,\nabla,N).$$

	\end{enumerate}
\end{theorem}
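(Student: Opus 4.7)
The plan is to trace the V-path
\[
\bar\alpha = \bar\alpha_0,\,\bar\beta_0,\,\bar\alpha_1,\,\bar\beta_1,\,\ldots,\,\bar\alpha_p,\,\bar\beta_p = \bar\beta
\]
by a case analysis on the initial codimension-$1$ face $\bar\beta_0 < \bar\alpha$, and to identify the terminal critical cell $\bar\beta$ in each case. First I would enumerate the possible $\bar\beta_0$: among the blocks of $\bar\alpha = (i,I,\nabla,N)$ only $I$ (when $|I|\geq 2$) and $N$ can be split, so $\bar\beta_0$ arises either as $(i,I_1,I_2,\nabla,N)$ with $I = I_1\sqcup I_2$, or by splitting $N = T\sqcup (N\setminus T)$ with $n+1\notin T$ into one of two cyclically distinct arrangements. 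A block count shows that every cell listed in the theorem contains at most one non-singleton block besides $N'$, and consequently $T$ must be a singleton $\{t\}\subseteq N$ (respectively, $\{j\}\subseteq I$); larger splits are ruled out by the class-preservation constraint described next, since no sequence of merges and splits allowed along a V-path can dissolve a multi-element block that is not the $n+1$-block.

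Second, for each such $\bar\beta_0$ I would apply the $\QP_{n+1}$-matching rule of \cref{sec5} at the smallest admissible index to identify $\bar\alpha_1$, noting that $\bar\alpha$ cannot be the match partner of $\bar\beta_0$ because $\bar\alpha$ is critical. Iterating, each subsequent pair $(\bar\beta_j,\bar\alpha_{j+1})$ is determined uniquely by the rule, and at every $\bar\alpha_j$ the successor $\bar\beta_j\ne\bar\beta_{j-1}$ is chosen so that the migrating label moves closer to a critical configuration. The class invariant of \cref{higherpath}, together with the fact that every matched pair in the $\QP_{n+1}$-matching has equal class, forces the two largest outside-$N$ elements of every intermediate cell to occupy the first two block positions, confining the migrating label to travel only within these positions and the $\nabla$-string.

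Branching on the origin of the migrating label and on its size relative to $i$ and $I$ then identifies the terminal critical cell. Splitting off $t\in N$ with $|I|\geq 2$ forces $J = I$ and yields the two sub-cases of (1)(a) according as $i<t<I$ or $t<i$; splitting off $j\in I$ forces $j$ to be the minimum element of $I$ (in order to satisfy $j<J=I-j$ in the critical-cell template) and yields the single cell of case (2). The main obstacle is case (1)(b), when $|I|=1$: here $I$ is itself a singleton, and the matching rule can either swap or merge it with the migrating singleton $\{t\}$, opening a third regime $t>I$ with no analogue in (1)(a). Showing that precisely the three endpoints $(I,t,i,\nabla,N')$, $(t,I,i,\nabla,N')$ and $(i,I,t\cup\nabla,N')$ are attained, and no spurious ones, requires a careful analysis of the intermediate configurations in which $\{I\}$ and $\{t\}$ become adjacent, together with repeated use of class preservation to exclude endpoints incompatible with the $\QP_{n+1}$ critical-cell template $\nabla'<j<J$.
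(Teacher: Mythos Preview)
Your plan is the same one the paper follows: fix the critical $(p{+}1)$-cell $\bar\alpha=(i,I,\nabla,N)$, enumerate its codimension-one faces, and trace each resulting V-path to its terminus. The paper, however, executes this purely computationally: for every admissible target $\bar\beta$ it simply writes out the two gradient paths cell by cell (the displayed sequences in the proof), and leaves the exhaustiveness to be read off from those lists. It does not give the kind of structural argument you are sketching.

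Your sketch is a legitimate alternative, but one step is not correct as stated. You write that ``no sequence of merges and splits allowed along a V-path can dissolve a multi-element block that is not the $n{+}1$-block'', and use this to force the initial split to be a singleton. This is false in general: the step $\bar\alpha_j\to\bar\beta_j$ is an arbitrary refinement, so a multi-element block can certainly be broken up over several steps while singleton merges happen elsewhere. What actually rules out non-singleton initial splits (and splits of $I$ at $j'\neq\min I$) is that the resulting $\bar\beta_0$ is matched \emph{downward} at some early step $k$ --- i.e.\ it is the upper cell of a pair $(\gamma^{p-1},\bar\beta_0)$ --- so no V-path of $p$- and $(p{+}1)$-cells can continue through it. Verifying this requires exactly the kind of step-by-step check the paper performs; class preservation alone does not give it. Similarly, your remark that ``splitting off $j\in I$ forces $j=\min I$'' only shows that $(j,I{-}j,i,\nabla,N)$ is critical for this particular $j$; you still have to show that a split at $j'\neq\min I$ cannot lead to some other critical cell, which again comes down to the explicit trace.

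In short: the strategy is right, and matches the paper's, but to turn it into a proof you must replace the ``block-dissolution'' heuristic with either the explicit path computations (as the paper does) or a precise argument about when $\bar\beta_0$ is matched upward versus downward under the $\QP_{n+1}$ rule.
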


\begin{proof}
	We will prove this explicitly \textit{i.e.,} by following the paths from $\bar{\alpha}$ to $\bar{\beta}$. Let $\nabla$ be $\{a_1\},\{a_2\},\dots, \{a_l\}$ with $a_1>a_2> \dots >a_l$
	\begin{enumerate}
		\item 
		\begin{enumerate}
			\item 
			\begin{enumerate}
			    \item For $\bar{\alpha} = (i,I,\nabla,N)$, $\beta =(t,I,i, \nabla, N')$ and $i<t<I$, we've the paths
		        \begin{equation}
		        \begin{gathered}
			    (i,I,a_1,a_2 ,\dots,  a_l,N)\\
			    (i,I,a_1,a_2 ,\dots,  a_l,t,N-t)\\
			    (t, a_l ,\dots,  a_1,I,i,N-t)\\
			    (t, a_l ,\dots,  a_1\cup I,i,N-t)\\
			    (t, a_l ,\dots,  I,a_1,i,N-t)\\
			    \vdots\\
			    (t,I,a_l ,\dots,  a_1,i,N-t)\\
			    (t,I,a_l ,\dots,  \{a_1,i\},N-t)\\
			    (t,I,a_l ,\dots,  i,a_1,N-t)\\
			    \vdots\\
			    (t,I,i,a_l,\dots,  a_1,N-t)\\
			    (t,I,i,a_l ,\dots,  \{a_2,a_1\},N-t)\\
			    (t,I,i,a_l ,\dots,  a_0,a_1,N-t)\\
			    \vdots\\
			    (t,I,i,a_1,a_2 ,\dots,  a_l,N-t)
		        \end{gathered}
		        \end{equation}
		and 
		        \begin{equation}
		        \begin{gathered}	
			    (i,I,a_1,a_2 ,\dots,  a_l,N)\\
			    (i,I,a_1,a_2 ,\dots,  a_l,N-t,t)\\
			    (t,i\cup I,a_1,a_2 ,\dots,  a_l,N-t)\\
			    (t,I,i,a_1,a_2 ,\dots,  a_l,N-t).
		        \end{gathered}
		        \end{equation}
		
				\item For $\bar{\alpha} = (i,I,\nabla,N)$, $\beta =(i, I, \nabla \cup t, N')$ and $a_p>t>a_{p+1}$, we've the paths
			    \begin{equation}
			    \begin{gathered}
				(i,I,a_1,a_2 ,\dots,  a_l,N)\\
				(i,I,a_1,a_2 ,\dots,  a_l,t,N-t)\\
				(i,I,a_1,a_2 ,\dots,  \{a_l,t\},N-t)\\
				(i,I,a_1,a_2 ,\dots,   t,a_l,N-t)\\
				(i,I,a_1,a_2 ,\dots,  \{a_{l-1},t\},a_l,N-t)\\
				\vdots\\
				(i,I,a_1,a_2 ,\dots,  \{a_{p+1},t\} ,\dots,  a_l,N-t)\\
				(i,I,a_1,a_2 ,\dots,  t,a_{p+1} ,\dots,  a_l,N-t)
			    \end{gathered}
			    \end{equation}
			    and
			    \begin{equation}
			    \begin{gathered}	
				(i,I,a_1,a_2 ,\dots,  a_l,N)\\
				(i,I,a_1,a_2 ,\dots,  a_l,N-t,t)\\
				(\{t,i\},I,a_1,a_2 ,\dots,  a_l,N-t)\\
				(i,t,I,a_1,a_2 ,\dots,  a_l,N-t)\\
				(i,t\cup I,a_1,a_2 ,\dots,  a_l,N-t)\\
				(i,I,t,a_1,a_2 ,\dots,  a_l,N-t)\\
				(i,I,\{t,a_1\},a_2 ,\dots,  a_l,N-t)\\
				(i,I,a_1,t,a_2 ,\dots,  a_l,N-t)\\
				\vdots\\
				(i,I,a_1,a_2 ,\dots,  \{t,a_p\} ,\dots,  a_l,N-t)\\
				(i,I,a_1,a_2 ,\dots,  a_p,t ,\dots,  a_l,N-t).	
			    \end{gathered}
			    \end{equation}
			\end{enumerate}

		\item For $\bar{\alpha} = (i,I,\nabla,N)$, $\beta =(I,t,i, \nabla, N')$ and $t>I$, we have  paths
		
		\begin{equation}
		\begin{gathered}
		(i,I,a_1,a_2 ,\dots,  a_l,N)\\
		(i,I,a_1,a_2 ,\dots,  a_l,N-t,t)\\
		(t,i,I,a_1,a_2 ,\dots,  a_l,N-t)\\
		(a_l,a_{l-1} ,\dots,  a_1,I,i,t,N-t)\\
		(a_l,a_{l-1} ,\dots,  a_1,I,i\cup t,N-t)\\
		(a_l,a_{l-1} ,\dots,  a_1,I,t,i,N-t)\\
		(a_l,a_{l-1} ,\dots,  a_1\cup I,t,i,N-t)\\
		(a_l,a_{l-1} ,\dots,  I,a_1, t,i,N-t)\\
		\vdots\\
		(I,a_l,a_{l-1} ,\dots,  a_1, t,i,N-t)\\
		(I,a_l,a_{l-1} ,\dots,  \{a_2,a_1\}, t,i,N-t)\\
		(I,a_l,a_{l-1} ,\dots,  a_1,a_2 t,i,N-t)\\
		\vdots\\
		(t,I,i, a_1,a_2 ,\dots,  a_l, N-t)
		\end{gathered}
		\end{equation}
		and
		
		\begin{equation}
		\begin{gathered}	
		(i,I,a_1,a_2 ,\dots,  a_l,N)\\
		(i,I,a_1,a_2 ,\dots,  a_l,t,N-\{t\})\\
		(i\cup I,a_1,a_2 ,\dots,  a_l,t,N-\{t\})\\
		(I,i,a_1,a_2 ,\dots,  a_l,t,N-\{t\})\\
		\vdots\\
		(I,t,i,a_1,a_2 ,\dots,  a_l,N-\{t\}).
		\end{gathered}
		\end{equation}
		\end{enumerate}
		
		The proofs for the other two cases are similar to (a).
		
		\item For $\bar{\alpha} = (i,I,\nabla,N)$, $\beta =(j,I-\{j\},i, \nabla, N)$ and $i<j$, we've the paths
		\begin{equation}
		\begin{gathered}
			(i,I,a_1,a_2 ,\dots,  a_l,N)\\
			(i,j,I-j,a_1,a_2 ,\dots,  a_l,N)\\
			(\{i,j\}, I-j,a_1,a_2 ,\dots,  a_l,N)\\
			(j,i, I-j,a_1,a_2 ,\dots,  a_l,N)\\
			(j,i \cup I-j,a_1,a_2 ,\dots,  a_l,N)\\
			(j,I-j,i,a_1,a_2 ,\dots,  a_l,N)
		\end{gathered}
		\end{equation}
		and
		\begin{equation}
		\begin{gathered}	
			(i,I,a_1,a_2 ,\dots,  a_l,N)\\
			(i,I-j,j,a_1,a_2 ,\dots,  a_l,N)\\
			(a_l,a_{l-1} ,\dots,  a_1, j,I-j,i,N)\\
			(a_l,a_{l-1} ,\dots,  \{a_1, j\},I-j,i,N)\\
			(a_l,a_{l-1} ,\dots,  j,a_1,I-j,i,N)\\
			\vdots\\
			(j,a_l,a_{l-1} ,\dots,  a_1,I-j,i,N)\\
			(j,a_l,a_{l-1} ,\dots,  a_1 \cup I-j,i,N)\\
			(j,a_l,a_{l-1} ,\dots,  I-j,a_1,i,N)\\
			\vdots\\
			(j,I-j,i,a_1,a_2 ,\dots,  a_l, N).	
		\end{gathered}
		\end{equation}

	\end{enumerate}
\end{proof}

The $\mathbb{Z}_2$-homology of $\QP_{n+1}$ can be computed directly from \cref{paths}.

\begin{theorem}\label{mod2thm}
The $\mathbb{Z}_2$-homology of $\QP_{n+1}$ is given as follows
\[ H_i(\QP_{n+1}, \mathbb{Z}_2) =
	\begin{cases}
	\displaystyle \bigoplus_{\xi(n,i)}\mathbb{Z}_2, & 0 \leq i \leq n-2;\\
	0, & \mathrm{otherwise}.
	\end{cases}
\]
Where, $\xi(n,i)$ denotes the sum $\displaystyle \sum_{k=0}^{i} \binom{n}{k}$.
\end{theorem}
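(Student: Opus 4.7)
My plan combines Theorem~\ref{paths} with a direct count of the critical cells. The first step is to observe that the Morse boundary operator of $\QP_{n+1}$ vanishes modulo~$2$. Given a pair of critical cells $\bar\alpha,\bar\beta$ of consecutive dimensions (say $\dim\bar\alpha = p+1$ and $\dim\bar\beta = p$), Theorem~\ref{paths} either yields no gradient path from a $p$-face of $\bar\alpha$ to $\bar\beta$, or else exhibits exactly two such paths (this is the ``zero or two'' count advertised in the section introduction). The Morse boundary coefficient $\langle \tilde\partial\bar\alpha, \bar\beta\rangle$ from \eqref{bdryeq2} is a signed sum with one term per such path; reducing modulo $2$ discards signs, and the two contributions cancel in pairs. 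Hence $\tilde\partial \otimes \mathbb{Z}_2 = 0$, so $H_i(\QP_{n+1};\mathbb{Z}_2)$ is free over $\mathbb{Z}_2$ of rank equal to the number of critical $i$-cells.

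The second step is to count those critical cells via a bijection with subsets of $[n]$ of size at least $n-i$. A critical $i$-cell has the form $(p, I, \nabla, N)$ with $\nabla < p < I$, $n+1 \in N$, $|I|\ge 1$, and $|\nabla| = n-i-2$ (since an $i$-cell has $n-i+1$ blocks and this partition contributes $1 + 1 + |\nabla| + 1$ of them). Put $A := \{p\} \cup I \cup \nabla \subseteq [n]$; then $|A| \ge n-i$. Conversely, given $A \subseteq [n]$ with $|A| \ge n-i$, declare $p$ to be the $(n-i-1)$-th smallest element of $A$, let $\nabla$ consist of the $n-i-2$ smaller elements of $A$ arranged in decreasing order, let $I$ consist of the remaining larger elements, and set $N := ([n]\setminus A) \cup \{n+1\}$. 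This recovers a unique critical $i$-cell, so the counting yields
\[
\#\{A \subseteq [n] : |A| \ge n-i\} \;=\; \sum_{k=n-i}^{n}\binom{n}{k} \;=\; \sum_{k=0}^{i}\binom{n}{k} \;=\; \xi(n,i),
\]
where the middle equality is the symmetry $\binom{n}{k}=\binom{n}{n-k}$. The range $0 \le i \le n-2$ follows from $|\nabla|\ge 0$.

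No step here poses a serious obstacle, because the hard combinatorial work---the enumeration of gradient paths between critical cells---is already encapsulated in Theorem~\ref{paths}. Once that enumeration is available, the mod-$2$ vanishing of $\tilde\partial$ is immediate and the counting reduces to a standard binomial identity. The only subtlety worth flagging is the verification that $A = [n] \setminus (N \setminus \{n+1\})$ together with the dimension $i$ fully determines the labelled partition; this is forced by the placement constraint $\nabla < p < I$ combined with the fixed cardinality $|\nabla| = n-i-2$, which pins down precisely which element of $A$ plays the role of $p$.
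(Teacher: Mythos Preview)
Your proof is correct and follows essentially the same route as the paper: you invoke Theorem~\ref{paths} to get the zero-or-two path count, conclude that the mod-$2$ Morse boundary vanishes, and then count critical $i$-cells. The only cosmetic difference is that the paper counts via the $(n+1)$-set $N$ (which has at most $i+1$ elements), whereas you count via its complement $A = [n]\setminus(N\setminus\{n+1\})$ (which has at least $n-i$ elements); these are the same enumeration by $\binom{n}{k}=\binom{n}{n-k}$.
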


\begin{proof}
One can infer from \cref{paths} that between any two critical cells in consecutive dimensions either there  is no path between them or there are exactly two paths.  
This implies that the boundary maps in the Morse complex of $\QP_{n+1}$  with $\mathbb{Z}_2$- coefficients are zero. 
So, the mod-$2$ Betti numbers are given by the number of critical cells. 
Once the dimension is fixed, say $i$, the $(n+1)$-set completely determines the critical cell and it contains at most $i+1$ elements.
\end{proof}

\section{The integral homology of \texorpdfstring{$\QP_{n+1}$}{cpn}}\label{sec6}

To compute the $\mathbb{Z}$-homology we need a well-defined notion of orientation on the cells of $\QP_{n+1}$. So, we induce an orientation on each cell of $\QP_{n+1}$ from its ascending representative in $\CP_{n+1}$. But, this is not sufficient to compute the $\mathbb{Z}$-homology because the paths between some critical cells involve identification of ascending and descending cells. If $\{\sigma, \tau\}=\bar{\sigma} \in \QP_{n+1}$ and $\sigma$ ascending, a compatible way of inducing an orientation on the cell $\sigma$ from canonical orientation of $\tau$ is required and is defined as follows.

Let $\cL$ denote the ordered neighbors of the vertex $\mathrm{PV}(\tau)$ as defined in \cref{canonical orientation}. The ordered vertices $\cL'$ obtained from $\cL$ by the action of $r$ on individual elements induce a orientation on $\sigma$. 

Now, we need to compute the difference in the orientation induced by each representative on $\bar{\sigma}$. The following examples demonstrate the existence of a closed-expression for the difference in the induced orientations. 

\begin{example} 
	Let $n=8$, $\sigma=\{1,2,3\}\{4,5\}\{6,7,8,9\}$ and $\tau=\{4,5\}\{1,2,3\}\{6,7,8,9\}$ are two cells in $\CP_9$ such that $\bar{\sigma}=\bar{\tau}$ in $\QP_9$.
	
\begin{center}
	\begin{tabular}{ c c }
	Neighbors of $\mathrm{PV}(\sigma)$ & Neighbors of $\mathrm{PV}(\tau)$\\
	$v_0=(\{1\}\{2\}\{3\}\{4\}\{5\}\{6\}\{7\}\{8\}\{9\})$ & $w_0=(\{4\}\{5\}\{1\}\{2\}\{3\}\{6\}\{7\}\{8\}\{9\})$ \\ 
	$v_1=(\{2\}\{1\}\{3\}\{4\}\{5\}\{6\}\{7\}\{8\}\{9\})$ & $w_1=(\{5\}\{4\}\{1\}\{2\}\{3\}\{6\}\{7\}\{8\}\{9\})$ \\  
	$v_2=(\{1\}\{3\}\{2\}\{4\}\{5\}\{6\}\{7\}\{8\}\{9\})$ & $w_2=(\{4\}\{5\}\{2\}\{1\}\{3\}\{6\}\{7\}\{8\}\{9\})$ \\
	$v_3=(\{1\}\{2\}\{3\}\{5\}\{4\}\{6\}\{7\}\{8\}\{9\})$ & $w_3=(\{4\}\{5\}\{1\}\{3\}\{2\}\{6\}\{7\}\{8\}\{9\})$ \\
	$v_4=(\{1\}\{2\}\{3\}\{4\}\{5\}\{7\}\{6\}\{8\}\{9\})$ &  $w_4=(\{4\}\{5\}\{1\}\{2\}\{3\}\{7\}\{6\}\{8\}\{9\})$ \\
	$v_5=(\{1\}\{2\}\{3\}\{4\}\{5\}\{6\}\{8\}\{7\}\{9\})$ & $w_5=(\{4\}\{5\}\{1\}\{2\}\{3\}\{6\}\{8\}\{7\}\{9\})$ \\
	$v_6=(\{1\}\{2\}\{3\}\{4\}\{5\}\{6\}\{7\}\{9\}\{8\})$ & $w_6=(\{4\}\{5\}\{1\}\{2\}\{3\}\{6\}\{7\}\{9\}\{8\})$ \\
	\end{tabular}
\end{center}

Let $\xi$ be the permutation which takes the vertex $\mathrm{PV}(\sigma)$ to the vertex $r(\mathrm{PV}(\tau))$.
	
\begin{center}
	\begin{tabular}{ c c }
	$v'_0 = (\{8\}\{7\}\{6\}\{3\}\{2\}\{1\}\{5\}\{4\}\{9\})$ & $w'_0 = (\{8\}\{7\}\{6\}\{3\}\{2\}\{1\}\{5\}\{4\}\{9\})$ \\
	$v'_1=(\{8\}\{7\}\{6\}\{2\}\{3\}\{1\}\{5\}\{4\}\{9\})$ & $w'_1=(\{8\}\{7\}\{6\}\{3\}\{2\}\{1\}\{4\}\{5\}\{9\})$ \\
	$v'_2=(\{8\}\{7\}\{6\}\{3\}\{1\}\{2\}\{5\}\{4\}\{9\})$ & $w'_2=(\{8\}\{7\}\{6\}\{3\}\{1\}\{2\}\{5\}\{4\}\{9\})$ \\
	$v'_3=(\{8\}\{7\}\{6\}\{3\}\{2\}\{1\}\{4\}\{5\}\{9\})$ & $w'_3=(\{8\}\{7\}\{6\}\{2\}\{3\}\{1\}\{5\}\{4\}\{9\})$ \\
	$v'_4=(\{7\}\{6\}\{3\}\{2\}\{1\}\{5\}\{4\}\{8\}\{9\})$ & $w'_4=(\{8\}\{6\}\{7\}\{3\}\{2\}\{1\}\{5\}\{4\}\{9\})$ \\
	$v'_5=(\{7\}\{8\}\{6\}\{3\}\{2\}\{1\}\{5\}\{4\}\{9\})$ & $w'_5=(\{7\}\{8\}\{6\}\{3\}\{2\}\{1\}\{5\}\{4\}\{9\})$ \\
	$v'_6=(\{8\}\{6\}\{7\}\{3\}\{2\}\{1\}\{5\}\{4\}\{9\})$ & $w'_6=(\{7\}\{6\}\{3\}\{2\}\{1\}\{5\}\{4\}\{8\}\{9\})$
	\end{tabular}
\end{center}

Here $v'_i$(resp. $w'_i$) denote the images of $v_i$(resp. $w_i$) under the map $\xi$(resp. $r$).

\begin{figure}[H]
	\centering
	\begin{tikzpicture}[line width=1pt,>=latex]
	\sffamily
	\node (a1) {$\pi(v_0)$};
	\node[below=0.3cm of a1] (a2) {$\xi(v_1)$};
	\node[below=0.3cm of a2] (a3) {$\xi(v_2)$};
	\node[below=0.3cm of a3] (a4) {$\xi(v_3)$};
	\node[below=0.3cm of a4] (a5) {$\xi(v_4)$};
	\node[below=0.3cm of a5] (a6) {$\xi(v_5)$};
	\node[below=0.3cm of a6] (a7) {$\xi(v_6)$};
	
	\node[right=4cm of a1] (b1) {$r(w_0)$};
	\node[below=0.3cm of b1] (b2) {$r(w_1)$};
	\node[below=0.3cm of b2] (b3) {$r(w_2)$};
	\node[below=0.3cm of b3] (b4) {$r(w_3)$};
	\node[below=0.3cm of b4] (b5) {$r(w_4)$};
	\node[below=0.3cm of b5] (b6) {$r(w_5)$};
	\node[below=0.3cm of b6] (b7) {$r(w_6)$};
		
	\node[shape=ellipse,draw=gray,minimum size=3cm,fit={(a1) (a7)}] {};
	\node[shape=ellipse,draw=gray,minimum size=3cm,fit={(b1) (b7)}] {};

	\draw[->,gray] (a1) -- (b1);
	\draw[->,gray] (a2) -- (b4);
	\draw[->,gray] (a3) -- (b3);
	\draw[->,gray] (a4) -- (b2);
	\draw[->,gray] (a5) -- (b7);
	\draw[->,gray] (a6) -- (b6);
	\draw[->,gray] (a7) -- (b5);
		
	\end{tikzpicture}
	\caption{}
	\label{fig:comparison2}
\end{figure}

Comparing the orientations induced on the cell $\bar{\sigma}=\bar{\tau}$ by the cells $\sigma$ and $\tau$ involves exactly two permutations. The permutation $\xi$ and the permutation from the comparing the orientation induced on $\sigma \in \CP_9$ by the vertices $v_0$ and $\xi(w_0)$, refer \cref{fig:comparison2}.
	
\end{example}

From the example, it is clear that the permutation involved in comparing the orientations induced are of the type
$$ (1,2,3,\dots,k) \rightarrow (k,k-1,\dots,2,1).$$
and the following function is useful in computing the sign of such permutations.

\begin{definition}
	Define a function, $\mathrm{sgn}: [n] \cup \{0\}\rightarrow \{1,-1\} $ as follows, given $s \in [n]\cup\{0\}$
	\[ \mathrm{\mathrm{sgn}}(s)=
	\begin{cases} 
	(-1)^{\frac{s-1}{2}}, & \mathrm{if\ s\ is\ odd}, \\
	(-1)^{\frac{s}{2}}, & \mathrm{if\ s\ is\ even}. 	
	\end{cases}
	\]
\end{definition}

Let $\sigma= (I_1,I_2, \dots, I_k,I_{\omega})$ and $\tau= r(\sigma)=(I_k,I_{k-1}, \dots, I_1,I_{\omega})$ be two cells in $\CP_{n+1}$. Observe that
\begin{itemize}
    \item The number of neighbours of a particular vertex $\bar{v}_0$ in $\bar{\sigma}$ is same as number of neighbours of $v_0$ in $\sigma$.
    \item The neighbours of vertex $v_0$ in $\sigma$ are naturally in 1-1 correspondence with the neighbours $r(v_0)$ in $r(\sigma)$.
\end{itemize}

\begin{theorem}
Let $\sigma= (I_1,I_2, \dots, I_k,I_{\omega})$ and $\tau= r(\sigma)=(I_k,I_{k-1}, \dots, I_1,I_{\omega})$. 
Then the difference in the orientations induced on the cell $\bar{\sigma}=\bar{\tau}$ in $\QP_{n+1}$ by $\sigma$ and $\tau$ in $\CP_{n+1}$ is given by the expression
\begin{equation}\label{sign correction}
    \mathrm{sgn}(|A|-|I_{\omega}|+1) \cdot \mathrm{sgn}(|I_{\omega}|-1) \cdot \displaystyle\prod_{i=1}^{k} \mathrm{sgn}(|I_i|)  \cdot \mathrm{sgn}(|I_{\omega}|),
\end{equation}
where $|A|$ is the number of neighbors of $\mathrm{PV}(\sigma)$.
\end{theorem}
\vspace{-1cm}
\begin{proof}
    Without loss of generality assume $PV(\sigma)=(1,2,\dots,n+1)$ i.e., the blocks $I_1=\{1,2,\dots,a_1\},\dots, I_j=\{a_{j-1}+1,a_{j-1}+2,\dots,a_j\}$ for every $j$ such that $\ 1\leq j \leq k$ or $j=\omega$.
    
    The neighbors of $\mathrm{PV}(\sigma)$ are ordered as follows.
	    \begin{equation}\label{list1}
	    \begin{gathered}
	    v_0=(\{1\}\{2\}\{3\}\dots\{n+1\})\\
	    v_1=(\{2\}\{1\}\{3\}\dots\{n+1\})\\
	    v_2=(\{1\}\{3\}\{2\}\dots\{n+1\})\\
	    \vdots\\
	    v_{a_1-1}=(\{1\}\dots \{a_1\}\{a_1-1\}\dots\{n+1\})\\
	    \vdots\\
	    v_{a_k-k-1}=(\{1\} \dots \{a_k-1\}\{a_k\}\dots \{n+1\})\\
	    v_{a_k-k}=(\{1\} \dots \{a_k+2\}\{a_k+1\}\dots \{n+1\})\\
	    \vdots\\
	    v_{|A|}=(\{1\}\{2\} \dots\{n+1\}\{n\})\\
	    \end{gathered}
	    \end{equation}
	    
    The neighbors of $\mathrm{PV}(\tau)$ are ordered as follows.
	\begin{equation}\label{list2}
	    \begin{gathered}
	    w_0=(\{a_{k-1}+1\}\{a_{k-1}+2\} \dots \{a_k\}\{a_{k-2}\} \dots \{n+1\})\\
	    w_1=(\{a_{k-1}+2\}\{a_{k-1}+1\} \dots \{a_k\}\{a_{k-2}\} \dots \{n+1\})\\
	    \vdots\\
	    w_{a_k-a_{k-1}}=(\{a_{k-1}+1\}\{a_{k-1}+2\} \dots \{a_{k-2}+1\}\{a_{k-2}\}\dots \{n+1\})\\
	    \vdots\\
	    w_{|A|-a_k-a_{k-1}-1}=(\{a_{k-1}+1\}\{a_{k-1}+2\} \dots \{2\}\{1\} \dots\{n+1\})\\
	    \vdots\\
	    w_{|A|-a_k-1}=(\{a_{k-1}+1\}\{a_{k-1}+2\} \dots \{a_1\}\{a_1-1\}\dots\{n+1\})\\
	    w_{|A|-a_k}=(\{a_{k-1}+1\}\{a_{k-1}+2\} \dots \{a_1\}\{a_k+2\}\{a_k+1\}\dots\{n+1\})\\
	    \vdots\\
	    w_{|A|}=(\{a_{k-1}+1\}\{a_{k-1}+2\} \dots \{n+1\}\{n\})
	    \end{gathered}
	\end{equation}		
    
    Now apply $r$ on the neighbours of $\mathrm{PV}(\tau)$ to obtain an ordered collection of vertices in $\sigma$. This would enable us to compare the orientations induced by \ref{list1} and \ref{list2} on $\bar{\sigma}$.
	\begin{equation}
	    \begin{gathered}
	    r(w_0)=(\{n\}\{n-1\} \dots \{a_k+1\}\{a_1\} \dots\{a_{k-1}+1\} \{n+1\})\\
	    r(w_1)=(\{n\}\{n-1\} \dots \{a_k+1\}\{a_1\} \dots \{a_{k-1}+1\}\{a_{k-1}+2\})\\
	    \vdots\\
	    r(w_{a_k-a_{k-1}})=(\{n\}\{n-1\} \dots \{a_{k-2}\}\{a_{k-2}+1\}\dots \{a_{k-1}+1\}\{n+1\})\\
	    \vdots\\
	    r(w_{|A|-a_k-a_{k-1}-1})=(\{n\}\{n-1\} \dots \{1\}\{2\} \dots\{a_{k-1}+1\}\{n+1\})\\
	    \vdots\\
	    r(w_{|A|-a_k-1})=(\{n\}\{n-1\} \dots \{a_1-1\}\{a_1\}\dots\{a_{k-1}+1\}\{n+1\})\\
	    r(w_{|A|-a_k})=(\{n\}\{n-1\} \dots \{a_1+1\}\{a_k+2\}\{a_k\}\dots\{a_{k-1}+1\}\{n+1\})\\
	    \vdots\\
	    r(w_{|A|})=(\{n-1\}\{n-2\} \dots \{a_{k-1}+1\}\{n\}\{n+1\})\\
	    \end{gathered}
	\end{equation}

	Let $\xi$ be the permutation which takes the vertex $\mathrm{PV}(\sigma)$ to the vertex $r(\mathrm{PV}(\tau))$.
	
	\begin{equation}
	    \begin{gathered}
	    \xi(v_0)=(\{a_1\}\{a_1-1\} \dots \{2\}\{1\}\{a_2\} \dots\{n+1\}\dots \{a_k+1\})\\
	    \xi(v_1)=(\{a_1\}\{a_1-1\} \dots \{1\}\{2\} \dots \{n+1\}\dots\{a_k+1\})\\
	    \xi(v_2)=(\{a_1\}\{a_1-1\} \dots \{2\}\{3\}\{1\} \dots \{n+1\}\dots\{a_k+1\})\\
	    \vdots\\
	    \xi(v_{a_1-1})=(\{a_1-1\}\{a_1\} \dots \{2\}\{1\}\dots \{n+1\}\dots\{a_k+1\})\\
	    \vdots\\
	    \xi(v_{a_k-k-1})=(\{a_1\}\{a_1-1\} \dots \{a_k\}\{a_k-1\} \dots\{n+1\}\{a_k+1\})\\
	    \xi(v_{a_k-k})=(\{a_1\}\{a_1-1\} \dots \{a_k+1\}\{a_k+2\} \dots\{n+1\}\{a_k+1\})\\
	    \vdots\\
	    \xi(v_{|A|})=(\{a_1\}\{a_1-1\} \dots \{n\}\{n+1\}\dots \{a_k+1\})\\
	    \end{gathered}
	\end{equation}	

It is clear from above that if $\sigma= (I_1,I_2, \dots, I_k,I_{\omega})$ the $\mathrm{sign}$ of the permutation $\xi$ is $\displaystyle\prod_{i=1}^{k} \mathrm{sgn}(|I_i|) \cdot \mathrm{sgn}(|I_{\omega}|)$. The $\mathrm{sign}$ of permutation coming from comparing the induced orientations on $\sigma \in \CP_{n+1}$ by the vertices $v_0$ and $\xi(w_0)$ is $\mathrm{sgn}(|A|-|I_{\omega}|+1) \cdot \mathrm{sgn}(|I_{\omega}|-1)$. 
Thus the total $\mathrm{sign}$ to be taken into account is $\mathrm{sgn}(|A|-|I_{\omega}|+1) \cdot \mathrm{sgn}(|I_{\omega}|-1) \cdot  \displaystyle\prod_{i=1}^{k} \mathrm{sgn}(|I_i|) \cdot \mathrm{sgn}(|I_{\omega}|).$
\end{proof}


The following observations are helpful in computing the $\mathbb{Z}$-homology of $\QP_{n+1}$.

\begin{enumerate}
    \item There exists no path or exactly two paths between critical cells whose dimension differ by one. Thus the matrices corresponding to the boundary maps contain only 2's and 0's depending on whether the orientation induced by the paths match or not.
    \item These are good paths, except some paths involves a identification of a cell $\sigma$ with $r(\sigma)$, where the orientation change involved is given by \cref{sign correction}.
\end{enumerate}

\begin{definition}
    Two rectangular matrices $A,B \in M_{n\times m}(\mathbb{Z})$ are called equivalent if they can be transformed into one another by a combination of elementary row and column operations.
\end{definition}

\begin{definition}[2-full rank]
    Let $f: \mathbb{Z}^{m} \rightarrow \mathbb{Z}^{n}$ be a group homomorphism. The map $f$ is \emph{2-full rank}, denoted $2\cF$, if it is equivalent to a matrix with only $2$'s on the main diagonal and $0$ everwhere elese.
\end{definition}

\begin{proposition}
The boundary maps in the Morse complex of $\QP_{n+1}$ are either 2-full rank or null maps. \textit{i.e.,} if the Morse complex $\mathcal{M}_{\boldsymbol{\cdot}}$ on $\QP_{n+1}$ is
$$0\longrightarrow \mathcal{M}_{n-2} \mathop{\longrightarrow}^{\tilde{\partial}_{n-2}} \mathcal{M}_{n-3} \mathop{\longrightarrow}^{\tilde{\partial}_{n-3}}  \dots  \mathop{\longrightarrow}^{\tilde{\partial}_1} \mathcal{M}_0 \rightarrow 0$$
then the boundary maps \[ \tilde{\partial}_i \equiv
	\begin{cases}
	\mathbf{0}, & \mathrm{if\ } i\mathrm{\ is\ odd};\\
	2\cF, & \mathrm{if\ } i\mathrm{\ is\ even}.
	\end{cases}
\]
\end{proposition}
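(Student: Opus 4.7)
The plan is to examine, for each pair $(\bar\alpha, \bar\beta)$ of critical cells of consecutive dimensions, the Morse-boundary coefficient $\langle \tilde\partial \bar\alpha, \bar\beta \rangle = w(c_1) + w(c_2)$ given by the two gradient paths of \Cref{paths}. By \Cref{mod2thm} this coefficient is $0$ or $\pm 2$, so the task is to track the relative sign of $w(c_1)$ and $w(c_2)$.

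First I would lift each pair of paths to $\CP_{n+1}$ using the ascending-representative convention. A direct inspection of the sub-cases of \Cref{paths} shows that in every pair one path (call it $c_1$) consists entirely of good triples in the sense of \Cref{good path lemma}, so by \Cref{paths are good} one has $w(c_1)=+1$, while the other path $c_2$ differs by one step at which the ascending representative of an intermediate cell $\pi(\sigma)$ equals $r(\sigma)$ rather than $\sigma$; at that step the orientation is multiplied by the closed-form factor of \cref{sign correction},
\[
\mathrm{sgn}(|A|-|I_\omega|+1)\cdot \mathrm{sgn}(|I_\omega|-1)\cdot \prod_{j}\mathrm{sgn}(|I_j|)\cdot \mathrm{sgn}(|I_\omega|),
\]
so that $w(c_2)$ equals this factor.

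Second, I would specialize the correction factor to $\bar\alpha = (i, I, \nabla, N)$ of dimension $k$. The block data satisfies $|I|+|N| = k+2$ together with $n-k-2$ singleton blocks in $\nabla$, and $\mathrm{sgn}(1)=1$ kills the singleton contributions from $\prod_j \mathrm{sgn}(|I_j|)$. Direct substitution reduces the factor to a $\pm 1$ whose value depends only on the parity of $k$, and the two paths induce opposite orientations on $\bar\beta$ exactly when $k$ is odd. This proves $\tilde\partial_i\equiv\mathbf{0}$ for odd $i$ and gives $\langle \tilde\partial\bar\alpha,\bar\beta\rangle=\pm 2$ for every pair with $\dim\bar\alpha$ even.

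Third, to upgrade the pointwise computation for even $k$ to the 2-full rank statement, I would index the columns of $\tilde\partial_k$ by the $(n+1)$-sets $N$ of the $k$-cells and the rows by the $(n+1)$-sets $N'$ of the $(k-1)$-cells. \Cref{paths} shows each column $N$ has a nonzero entry in row $N$ (from Case 2, whenever $|N|\leq k$) and in each row $N\setminus\{t\}$ with $t\in N-\{n+1\}$ (from Case 1). Ordering both indices by increasing $|N|$ makes the matrix upper block-bidiagonal with $\pm 2\cdot I$ on the diagonal blocks; the $\xi(n,k-1)\times \xi(n,k-1)$ square submatrix formed by columns with $|N|\leq k$ is upper triangular with diagonal $\pm 2\cdot I$, so its determinant is $\pm 2^{\xi(n,k-1)}$ and the matrix obtained by halving is unimodular. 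Hence the Smith normal form of this submatrix is $2 I_{\xi(n,k-1)}$, and since the remaining columns of $\tilde\partial_k$ lie in $2\mathbb{Z}^{\xi(n,k-1)}$ (equivalently in the image of the submatrix), the SNF of $\tilde\partial_k$ is $(2 I_{\xi(n,k-1)} \,|\, 0)$, proving 2-full rank.

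The main obstacle is step two: reducing the correction factor to a function of $k \bmod 2$ requires a case-by-case verification because the location of the reflection within $c_2$ varies among the sub-cases of \Cref{paths}, and the parameters $|I|$, $|N|$, $|A|$ appearing in \cref{sign correction} depend on the specific $\bar\beta$. A secondary subtlety for the 2-full rank argument is checking uniformity of signs on the Case 2 diagonal blocks (so that the halved square submatrix is genuinely unimodular), which again follows from the parity conclusion of step two.
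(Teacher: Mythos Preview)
Your plan is essentially the paper's approach. Both proofs lift the two gradient paths of \Cref{paths} to $\CP_{n+1}$, observe that one path consists of good triples while the other contains a single reflection step, and then reduce the coefficient computation to evaluating the orientation correction of \cref{sign correction} on that one reflected cell. The paper's Claims~2 and~3 carry out exactly the case-by-case parity check you flag as the main obstacle: for each sub-case of \Cref{paths} it writes down the block sizes of the reflected intermediate cell and verifies that the correction factor equals $+1$ when $i$ is odd (hence cancellation, $\tilde\partial_i=0$) and $-1$ when $i$ is even and $N=N'$ (hence the Case~2 entries are $\pm 2$).

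Your third step is actually more complete than the paper's. The paper's proof of Claim~3 only asserts that ``it is enough to show $\langle\tilde\partial\alpha,\beta\rangle=2$ whenever $N=N'$'' and then verifies that single coefficient; it never spells out why this suffices for $2\mathcal F$. Your argument---order columns and rows by $|N|$, note that the $\xi(n,k-1)$ columns with $|N|\le k$ form a square block-upper-triangular submatrix whose halved version has $\pm 1$ on the diagonal and hence is unimodular, and then absorb the remaining columns---is the natural justification and is correct. One small remark: you do not need uniformity of the diagonal signs; an upper-triangular integer matrix with $\pm 1$ on the diagonal is already unimodular, and the off-diagonal entries are automatically even by \Cref{mod2thm}.
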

\begin{proof}
If the sign correction for the identification involved in the path is positive (resp. negative), then by \cref{good path lemma} and \cref{paths are good} the coefficient $\l \partial \alpha, \beta \r =0$ (resp. $\l \partial \alpha, \beta \r =2$). 

\textbf{Claim 1:} $\tilde{\partial}_1=0$.

\textit{Proof.}
    Let $\alpha = (i,I,\nabla,N)$ and $\beta=(j,J,\nabla', N')$ be two critical cells contained in $\cM_1$ and $\cM_0$ respectively.
    
    \begin{enumerate}
        \item If $|N|=1$, then $|I|=2$ and $N'=N=\{n+1\}$. Otherwise there will be no path between the cells giving $\l \tilde{\partial} \alpha,\beta \r =0$.
        There is an identification of the cell $(i,I-j,j,\nabla,N)$ with its image under the map $r$ during the path. All the blocks of this cell are singletons, thus the sign correction given by the \cref{sign correction} is 1.
    
        \item If $|N|=2$, then $|I|=1$, $N' \subset N$ and $|N'|=1$. Otherwise an argument similar to above shows that $\l \tilde{\partial} \alpha,\beta \r =0$.
        There is an identification of the cell  $(i,I,\nabla,N-t,t)$ or $(i,I,\nabla,t,N-t)$  with its image under the map $r$ during the path. All the blocks of these cells are singletons, thus the sign correction given by the \cref{sign correction} is 1.
    \end{enumerate}
    Since there is no effect on the orientation induced along the paths by the action, an argument similar to \cref{boundary vanish} shows that $\tilde{\partial}_1 =0$.

\textbf{Claim 2:} $\partial_i=0$ when $i$ is odd.
    
\textit{Proof.} Let $\alpha = (i,I,\nabla,N)$ and $\beta=(j,J,\nabla', N')$ be two critical cells contained in $\cM_i$ and $\cM_{i-1}$ respectively. Also, let $d$ denote the number of blocks in $\alpha$ which is equal to $(n+1-i)$. 
\begin{enumerate}
    \item Let $N=N'$ and $J=I-j$.
    \begin{itemize}
        \item Let $|N|=k+1$ for some $k$ odd. There is an identification of the cell $(i,I-j,j,\nabla,N)$ with its image under $r$ during the path. From \cref{sign correction}, the sign correction $\cS$ is given by $\mathrm{sgn}(|J|)\cdot \mathrm{sgn}(|N|)\cdot \mathrm{sgn}(|J|-1) \cdot \mathrm{sgn} (|N|-1)$.\\
        Observe that the $|J|=n-k-d-3=(n+1-d)-(k+4)$. Since $(n+1-d)=i$ is odd and $(k+4)$ is odd, $|J|$ is even and 
        \begin{align*}
            \mathrm{sgn}(|J|) &=(-1)^{\frac{n-k-d-3}{2}}.
        \end{align*}
        Similarly,
        \begin{align*}
            \mathrm{sgn}(|N|) &= (-1)^{\frac{k+1}{2}},\\
            \mathrm{sgn}(|J|-1) &= (-1)^{\frac{n-k-d-5}{2}},\\
            \mathrm{sgn} (|N|-1) &=(-1)^{\frac{k-1}{2}}.
        \end{align*}
        This shows that the sign correction $\cS$ is equal to 1.
            
        \item Let $|N|=k+1$ for some $k$ even. There is an identification of the cell $(i,I-j,j,\nabla,N)$ with its image under $r$ during the path. From \cref{sign correction}, the sign correction $\cS$ is given by $\mathrm{sgn}(|J|)\cdot \mathrm{sgn}(|N|)\cdot \mathrm{sgn}(|J|-1) \cdot \mathrm{sgn} (|N|-1)$.\\
        Observe that the $|J|=n-k-d-3=(n+1-d)-(k+4)$. Since $(n+1-d)=i$ is odd and $(k+4)$ is even, $|J|$ is odd and 
        \begin{align*}
            \mathrm{sgn}(|J|) &=(-1)^{\frac{n-k-d-4}{2}}.
        \end{align*}
        Similarly,
        \begin{align*}
            \mathrm{sgn}(|N|) &= (-1)^{\frac{k}{2}},\\
            \mathrm{sgn}(|J|-1) &= (-1)^{\frac{n-k-d-4}{2}},\\
            \mathrm{sgn} (|N|-1) &=(-1)^{\frac{k}{2}}.
        \end{align*}
        This shows that the sign correction $\cS$ is equal to 1.
        \end{itemize}
        
    \item Let $I=J$ and $N'=N-t$ for some $t \in N$.
    \begin{itemize}
        \item Let $|N|=k+1$ for some $k$ odd. There is an identification of the cell  $(i,I,\nabla,N-t,t)$ or $(i,I,\nabla,t,N-t)$  with its image under the map $r$ during the path. From \cref{sign correction}, the sign correction $\cS$ is given by $\mathrm{sgn}(|I|)\cdot \mathrm{sgn}(|N'|)\cdot \mathrm{sgn}(|I|-1) \cdot \mathrm{sgn} (|N'|-1)$.\\
        Observe that the $|I|=n-k-d-2=(n+1-d)-(k+3)$. Since $(n+1-d)$ is odd and $(k+3)$ is even, $|I|$ is odd and 
        \begin{align*}
            \mathrm{sgn}(|I|) &=(-1)^{\frac{n-k-d-3}{2}}.
        \end{align*}
        Similarly,
        \begin{align*}
            \mathrm{sgn}(|N|) &= (-1)^{\frac{k-1}{2}},\\
            \mathrm{sgn}(|J|-1) &= (-1)^{\frac{n-k-d-3}{2}},\\
            \mathrm{sgn} (|N|-1) &=(-1)^{\frac{k-1}{2}}.
        \end{align*}
        Clearly, the sign correction $\cS$ is equal to 1.
            
        \item Let $|N|=k+1$ for some $k$ even. There is an identification of the cell  $(i,I,\nabla,N-t,t)$ or $(i,I,\nabla,t,N-t)$  with its image under the map $r$ during the path. From \cref{sign correction}, the sign correction $\cS$ is given by $\mathrm{sgn}(|I|)\cdot \mathrm{sgn}(|N'|)\cdot \mathrm{sgn}(|I|-1) \cdot \mathrm{sgn} (|N'|-1)$.\\
        Observe that the $|I|=n-k-d-2=(n+1-d)-(k+3)$. Since $(n+1-d)$ is odd and $(k+3)$ is odd, $|I|$ is even and 
        \begin{align*}
            \mathrm{sgn}(|I|) &=(-1)^{\frac{n-k-d-2}{2}}.
        \end{align*}
        Similarly,
        \begin{align*}
            \mathrm{sgn}(|N|) &= (-1)^{\frac{k}{2}},\\
            \mathrm{sgn}(|J|-1) &= (-1)^{\frac{n-k-d-4}{2}},\\
            \mathrm{sgn} (|N|-1) &=(-1)^{\frac{k-2}{2}}.
        \end{align*}
        Clearly, the sign correction $\cS$ is equal to 1. 
    \end{itemize}
\end{enumerate}

\textbf{Claim 3:} $\partial_i=2\cF$ when $i$ is even.

\textit{Proof.} Let $\alpha = (i,I,\nabla,N)$ and $\beta=(j,J,\nabla', N')$ be two critical cells contained in $\cM_i$ and $\cM_{i-1}$ respectively.  Also, let $d$ denote the number of blocks in $\alpha$ which is equal to $(n+1-i)$. It is enough to show that $\l \partial \alpha, \beta \r =2$ whenever $N=N'$.
    \begin{enumerate}
        \item Let $|N|=k+1$ for some $k$ odd. There is an identification of the cell $(i,I-j,j,\nabla,N)$ with its image under $r$ during the path. From \cref{sign correction}, the sign correction $\cS$ is given by $\mathrm{sgn}(|J|)\cdot \mathrm{sgn}(|N|)\cdot \mathrm{sgn}(|J|-1) \cdot \mathrm{sgn} (|N|-1)$.\\
            Observe that the $|J|=n-k-d-3=(n+1-d)-(k+4)$. Since $(n+1-d)$ is even and $(k+4)$ is odd, $|J|$ is odd and 
            \begin{align*}
                \mathrm{sgn}(|J|) &=(-1)^{\frac{n-k-d-4}{2}}.
            \end{align*}
            Similarly,
            \begin{align*}
                \mathrm{sgn}(|N|) &= (-1)^{\frac{k+1}{2}},\\
                \mathrm{sgn}(|J|-1) &= (-1)^{\frac{n-k-d-4}{2}},\\
                \mathrm{sgn} (|N|-1) &=(-1)^{\frac{k-1}{2}}.
            \end{align*}
            Clearly, the sign correction $\cS$ is equal to -1.
            
        \item Let $|N|=k+1$ for some $k$ even. There is an identification of the cell $(i,I-j,j,\nabla,N)$ with its image under $r$ during the path. From \cref{sign correction}, the sign correction $\cS$ is given by $\mathrm{sgn}(|J|)\cdot \mathrm{sgn}(|N|)\cdot \mathrm{sgn}(|J|-1) \cdot \mathrm{sgn} (|N|-1)$.
            Observe that $|J|=n-k-d-3=(n+1-d)-(k+4)$. Since $(n+1-d)$ is even and $(k+4)$ is even, $|J|$ is even and 
            \begin{align*}
                \mathrm{sgn}(|J|) &=(-1)^{\frac{n-k-d-3}{2}}.
            \end{align*}
            Similarly,
            \begin{align*}
                \mathrm{sgn}(|N|) &= (-1)^{\frac{k}{2}},\\
                \mathrm{sgn}(|J|-1) &= (-1)^{\frac{n-k-d-5}{2}},\\
                \mathrm{sgn} (|N|-1) &=(-1)^{\frac{k}{2}}.
            \end{align*}
    \end{enumerate}
        Clearly, the sign correction $\cS$ is -1.
\end{proof}

\begin{theorem}\label{mainthm}
The $\mathbb{Z}$-homology of $\QP_{n+1}$ is given as follows.

If n is even, then 
\[ H_i(\QP_{n+1},\mathbb{Z}) =
	\begin{cases}
    \displaystyle \bigoplus_{\xi(n,i)}\mathbb{Z}_2 , & \mathrm{if\ } i \mathrm{\ is\ odd\ and\ } 0 \leq i \leq n-2 ;\\
	\displaystyle \bigoplus_{\binom{n}{i}}\mathbb{Z} , & \mathrm{if\ } i \mathrm{\ is\ even\ and\ } 0 \leq i \leq n-2;\\
	0, & \mathrm{otherwise.}
	\end{cases}
\]

If n is odd, then

\[ H_i(\QP_{n+1},\mathbb{Z}) =
	\begin{cases}
	\displaystyle \bigoplus_{\xi(n,i)}\mathbb{Z}, & \mathrm{if\ } i = n-2;\\
	\displaystyle \bigoplus_{\xi(n,i)}\mathbb{Z}_2, & \mathrm{if\ } i \mathrm{\ is\ odd\ and\ } 0 \leq i < n-2 ;\\
	\displaystyle \bigoplus_{\binom{n}{i}}\mathbb{Z} , & \mathrm{if\ } i \mathrm{\ is\ even\ and\ } 0 \leq i \leq n-2;\\
	0, & \mathrm{otherwise.}
	\end{cases}
\]

Where, $\xi(n,i)$ denotes the sum $\displaystyle \sum_{k=0}^{i} \binom{n}{k}$.
\end{theorem}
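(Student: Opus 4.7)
\bigskip
\noindent\textbf{Proof proposal.}
The plan is to deduce \Cref{mainthm} entirely from the Morse complex $\mathcal{M}_{\bullet}$ and the preceding structural proposition, which states that $\tilde{\partial}_i = \mathbf{0}$ for $i$ odd and $\tilde{\partial}_i = 2\cF$ for $i$ even. The computation reduces to (a) counting the critical cells in each dimension and (b) reading off the homology of a chain complex whose differentials alternate between the zero map and a map equivalent to $2 I$ in the smith normal form sense. Both are essentially mechanical given what has been established.

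\smallskip
\noindent\emph{Step 1: Count critical cells.} A critical cell of $\QP_{n+1}$ has the form $(i,I,\nabla,N)$ with $\nabla<i<I$. Fixing the dimension $d$ forces $|\nabla|=n-d-2$, and the positions of the blocks are completely determined once one specifies the set $S:=\{i\}\cup I\cup\nabla$, i.e.\ the complement of $N$ in $[n+1]$. Indeed, given $S$ of size $m$, the element $i$ is forced to be the $(|\nabla|+1)$-th smallest entry of $S$, so that the remaining elements split uniquely as $\nabla<i<I$. The condition $|I|\geq 1$ amounts to $m\geq n-d$, while $n+1\in N$ forces $S\subseteq[n]$. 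Hence the number of critical cells of dimension $d$ equals
\[
c_d \;=\; \sum_{m=n-d}^{n}\binom{n}{m}\;=\;\sum_{k=0}^{d}\binom{n}{k}\;=\;\xi(n,d),
\]
which in particular reproduces the Betti numbers appearing in \Cref{mod2thm}.

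\smallskip
\noindent\emph{Step 2: Read off the integral homology.} Since the differential $\tilde{\partial}_i \colon \mathcal{M}_i\to\mathcal{M}_{i-1}$ is either zero or $2\cF$, and $c_d$ is strictly increasing in $d$, whenever $\tilde{\partial}_i = 2\cF$ the matrix is Smith-equivalent to the block form $\bigl(\begin{smallmatrix}\mathbf{0}\\ 2I_{c_{i-1}}\end{smallmatrix}\bigr)$. Therefore
\[
\ker\tilde{\partial}_i \;\cong\;\mathbb{Z}^{c_i-c_{i-1}}\;=\;\mathbb{Z}^{\binom{n}{i}},\qquad \operatorname{im}\tilde{\partial}_i \;\cong\;(2\mathbb{Z})^{c_{i-1}}.
\]
When $\tilde{\partial}_i=0$ we have $\ker\tilde{\partial}_i=\mathcal{M}_i$ and $\operatorname{im}\tilde{\partial}_i=0$. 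Combining these observations with the parity alternation $\tilde{\partial}_{\text{even}}=2\cF$, $\tilde{\partial}_{\text{odd}}=\mathbf{0}$ gives, for any $d$ with $\tilde{\partial}_{d+1}$ existing,
\[
H_d(\QP_{n+1},\mathbb{Z})\;=\;
\begin{cases}
\mathbb{Z}^{\binom{n}{d}} & \text{if } d \text{ is even},\\[3pt]
(\mathbb{Z}/2)^{c_d}=(\mathbb{Z}/2)^{\xi(n,d)} & \text{if } d \text{ is odd}.
\end{cases}
\]
The boundary cases are the top dimension $d=n-2$ (where no $\tilde{\partial}_{d+1}$ sits above) and $d=0$ (where $\tilde{\partial}_1$ is forced to be zero). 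For $n$ even we have $n-2$ even, so $\tilde{\partial}_{n-2}=2\cF$ contributes $\mathbb{Z}^{\binom{n}{n-2}}$ in the top dimension, agreeing with the formula; for $n$ odd we have $n-2$ odd, so $\tilde{\partial}_{n-2}=\mathbf{0}$ and $H_{n-2}=\mathbb{Z}^{\xi(n,n-2)}$, again matching. Identifying $c_d-c_{d-1}=\binom{n}{d}$ throughout yields the statement in both parity cases.

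\smallskip
\noindent\emph{The main obstacle.} The genuinely hard input, namely that each non-zero $\tilde{\partial}_i$ reduces to a $2\cF$ matrix, has already been absorbed into the proposition preceding the theorem; what remains for the proof itself is the careful bookkeeping in Step~1 and the parity split in Step~2. The only subtle point one must verify is that the Smith normal form of a $2\cF$ map is truly $\bigl(\begin{smallmatrix}\mathbf{0}\\ 2I\end{smallmatrix}\bigr)$ (rather than a matrix of mixed rank), which is immediate from the definition together with the strict inequality $c_{i-1}<c_i$; once this is in hand, the rank computation $c_i-c_{i-1}=\binom{n}{i}$ is just the telescoping of partial binomial sums.
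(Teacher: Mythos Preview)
Your proposal is correct and follows essentially the same route as the paper: both deduce the theorem directly from the preceding Proposition (that $\tilde{\partial}_i$ is $\mathbf{0}$ for $i$ odd and $2\cF$ for $i$ even) by reading off the homology of the Morse complex case by case. You fill in somewhat more detail than the paper does, in particular the explicit count $c_d=\xi(n,d)$ in Step~1 (which the paper handled earlier when proving \Cref{mod2thm}) and the Smith normal form bookkeeping in Step~2; the paper's own proof simply records the three cases $i=n-2$, $i$ odd, $i$ even and states the resulting quotient in each.
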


\begin{proof}
We will present the proof for the case of $n$ being odd, the proof for the even case is similar in nature.
\begin{enumerate}
    \item If $i = n-2$, then the Morse complex looks like 
    $$ 0 \mathop{\longrightarrow}^{0} \mathcal{M}_{n-2}\mathop{\longrightarrow}^{0} \mathcal{M}_{n-3}.$$
    Then the homology at $\mathcal{M}_{n-2}$ is $\mathbb{Z}^{\mathrm{rnk}(\mathcal{M}_{n-2})}$.
    
    \item If $i$ is odd and $i \neq n-2$, then the Morse complex looks like 
    $$\mathcal{M}_{i+1} \mathop{\longrightarrow}^{2F} \mathcal{M}_{i}\mathop{\longrightarrow}^{0} \mathcal{M}_{i-1}.$$
    Then the homology at $\mathcal{M}_{i}$ is $\mathbb{Z}_2^{ \mathrm{rnk}(\mathcal{M}_{i})}$.
    
    \item If $i$ is even, then the Morse complex looks like 
    $$\mathcal{M}_{i+1} \mathop{\longrightarrow}^{0} \mathcal{M}_{i}\mathop{\longrightarrow}^{2F} \mathcal{M}_{i-1}.$$
    Then the homology at $\mathcal{M}_{i}$ is $\mathbb{Z}^{\mathrm{rnk}(\mathcal{M}_{i})-\mathrm{rnk}(\mathcal{M}_{i-1})}$.
\end{enumerate}
\end{proof}

\printbibliography
\end{document}